\newtheorem{theorem}{Theorem}[section]
\newtheorem{lemma}[theorem]{Lemma}
\newtheorem{proposition}[theorem]{Proposition}
\newtheorem{corollary}[theorem]{Corollary}
\theoremstyle{definition}\newtheorem{definition}[theorem]{Definition}
\theoremstyle{definition}\newtheorem{example}[theorem]{Example}
\theoremstyle{definition}\newtheorem{remark}[theorem]{Remark}
\newcommand{\rank}{\operatorname{rank}}
\newcommand{\smooth}{\operatorname{smooth}(X)}
\newcommand{\Iso}{\operatorname{Isom}}
\newcommand{\Isolin}{\operatorname{Isom^{Lin}}}
\newcommand{\Full}{\operatorname{Full}}
\newcommand{\stab}{\operatorname{Stab}}
\begin{document}

\title{Equivalence of continuous, local and infinitesimal rigidity in normed spaces}

\author{Sean Dewar
}

\begin{abstract}
We present a rigorous study of framework rigidity in general finite dimensional normed spaces from the perspective of Lie group actions on smooth manifolds. As an application, we prove an extension of Asimow and Roth's 1978/9 result establishing the equivalence of local, continuous and infinitesimal rigidity for regular bar-and-joint frameworks in a $d$-dimensional Euclidean space. Further, we obtain upper bounds for the dimension of the space of trivial motions for a framework and establish the flexibility of small frameworks in general non-Euclidean normed spaces.

\keywords{Bar-joint frameworks \and Infinitesimal rigidity \and Continuous rigidity \and Local rigidity \and Finite dimensional normed spaces}
\end{abstract}

\maketitle
\tableofcontents

\section{Introduction}

A framework $(G,p)$ is an embedding $p$ of the vertices of a simple graph $G$ into a given normed space. With a given framework a natural question is whether it is in some sense ``rigid''. In Euclidean spaces many types of rigidity such as \textit{global}, \textit{redundant} and \textit{universal} have been studied intensely \cite{jordanbook} \cite{globalshinichi} \cite{universal}. We wish to detect whether a framework in a general normed space is structurally rigid in the sense that either any continuous motion of the vertices that preserves the edge lengths corresponds to an isometric motion of the embedded vertices (\textit{continuous rigidity}) or that the embedding is locally unique up to an isometric map (\textit{local rigidity}). In Euclidean space, one method is to consider the a priori stronger notion of \textit{infinitesimal rigidity} as this implies local and continuous rigidity (see for example \cite{gluck}). We shall prove the below theorem (found in Section \ref{3}):

\begin{theorem}\label{asimowroth}
Let $(G,p)$ be a constant (see Section \ref{3}) finite framework in a finite dimensional normed space $X$, then the following are equivalent:
\begin{enumerate}[(i)]
\item $(G,p)$ is infinitesimally rigid in $X$,

\item $(G,p)$ is locally rigid in $X$,

\item $(G,p)$ is continuously rigid in $X$.
\end{enumerate}
\end{theorem}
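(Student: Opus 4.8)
The plan is to prove the cycle of implications (i) $\Rightarrow$ (ii) $\Rightarrow$ (iii) $\Rightarrow$ (i), with the last implication carrying essentially all the difficulty. Throughout I would work with the smooth edge map $f$ sending a well-positioned configuration $q$ (one for which every edge difference $q_u - q_v$ lies in $\smooth$) to the tuple of edge lengths $(\|q_u - q_v\|)_{uv \in E(G)}$; this is smooth precisely because we restrict to the open set of configurations avoiding the non-differentiable points of the norm. The trivial motions of $(G,p)$ form the orbit of $p$ under the diagonal action of the isometry group $\Iso(X)$, which is a Lie group acting smoothly (and properly, since the linear part is compact and translations act properly), so orbits are embedded closed submanifolds. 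Infinitesimal rigidity is then the statement that $\ker df_p$ coincides with the tangent space to this orbit. The defining feature of a constant framework is that $\rank df_q$ is locally constant near $p$, so the constant rank theorem applies and $f^{-1}(f(p))$ is, in a neighbourhood of $p$, a smooth submanifold of dimension $\dim \ker df_p$.

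For (i) $\Rightarrow$ (ii): infinitesimal rigidity forces $\dim \ker df_p$ to equal the dimension of the orbit of $p$, which for a full-dimensional ($\Full$) configuration equals $\dim \Iso(X)$ via the earlier analysis of $\stab(p)$. Since the orbit is an embedded submanifold of the solution manifold $f^{-1}(f(p))$ of the same dimension, it is open in that manifold; being also closed, it fills out an entire neighbourhood of $p$ in $f^{-1}(f(p))$. Hence every configuration near $p$ with the same edge lengths is congruent to $p$, which is exactly local rigidity.

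For (ii) $\Rightarrow$ (iii): given a continuous edge-length-preserving motion $\gamma$ with $\gamma(0) = p$, local rigidity forces $\gamma(t)$ to be congruent to $p$ for all small $t$, and a routine open–closed argument on the parameter interval propagates this to the whole motion — the set of congruent parameters is closed by continuity of the norm and open by applying local rigidity at each congruent point of the orbit (every such point is congruent to $p$ and so inherits local rigidity). This direction is essentially formal.

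The hard direction is (iii) $\Rightarrow$ (i), which I would prove by contraposition. If $(G,p)$ is not infinitesimally rigid then $\dim \ker df_p$ strictly exceeds the orbit dimension, so by the constant rank theorem the solution manifold $f^{-1}(f(p))$ is strictly larger than the orbit near $p$. The crux is to convert this excess dimension into an honest continuous flex: since the orbit is an embedded submanifold of strictly smaller dimension inside $f^{-1}(f(p))$, one can choose a smooth curve through $p$ in the solution manifold whose velocity at $p$ is not tangent to the orbit, giving a continuous motion that preserves all edge lengths yet immediately leaves the congruence class. The main obstacle is exactly this integration step; it is false without regularity, and it is the place where the constant rank hypothesis is indispensable — one must also verify in the non-Euclidean setting that the constructed motion stays well-positioned (automatic on a small enough neighbourhood, as $\smooth$ is open) and that the orbit dimension has been computed correctly through the Lie-theoretic description of $\Iso(X)$ and the stabiliser of a full-dimensional framework.
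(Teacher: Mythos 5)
Your overall architecture is the same as the paper's: Lemma \ref{paperman} of the paper uses the $C^1$ constant rank theorem to make $f_G^{-1}[f_G(p)]$ a manifold near $p$ with tangent space $\mathcal{F}(G,p)$, identifies $\mathcal{O}_p$ as a submanifold with tangent space $\mathcal{T}(p)$, proves (i)$\Leftrightarrow$(ii) by comparing these tangent spaces, proves (iii)$\Rightarrow$(ii) by local path-connectedness of the solution manifold, and proves (ii)$\Rightarrow$(iii) by transporting the local-rigidity neighbourhood along the orbit via isometry-invariance and a clopen argument. Your three implications use exactly these ingredients; your contrapositive (iii)$\Rightarrow$(i), integrating a kernel vector outside $\mathcal{T}(p)$ to a $C^1$ curve in the solution manifold, is just the curve-level form of the paper's tangent-space comparison, and your open--closed argument on the parameter interval is the paper's clopen argument on $f_G^{-1}[f_G(p)]$.

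However, two of your justifications are wrong as stated, and one of them touches the precise point where the normed-space setting differs from the Euclidean one. First, in a general finite-dimensional normed space $\smooth$ is \emph{not} open: it is dense and has null complement (Proposition \ref{paper1}), but openness can fail, which is why the paper's closing remark treats ``$\smooth$ is an open subset of $X$'' as an extra hypothesis satisfied by special spaces such as $\ell_q^d$. Hence $\mathcal{W}(G)$ need not be open, so your claims that the edge map is ``smooth precisely because we restrict to the open set of configurations avoiding the non-differentiable points of the norm'' and that well-positionedness of the constructed motion is ``automatic \ldots as $\smooth$ is open'' both fail. The repair is that the constancy hypothesis itself supplies what you want: by definition it provides a neighbourhood $\mathcal{N}(p) \subseteq \mathcal{W}(G)$ of $p$, so $p$ is interior to $\mathcal{W}(G)$, where $f_G$ is $C^1$ with locally constant rank (Proposition \ref{paper2} and Lemma \ref{rigopcont}) --- note $C^1$, not $C^\infty$: the norm is in general only once differentiable at smooth points, and the $C^1$ constant rank theorem is what must be invoked. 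Second, your (i)$\Rightarrow$(ii) step detours through fullness to identify the orbit dimension with $\dim \Iso(X)$, but fullness is not a hypothesis of the theorem and need not hold; the argument requires only $\dim \mathcal{F}(G,p) = \dim \mathcal{T}(p)$, which is exactly infinitesimal rigidity (since $\mathcal{T}(p) \subseteq \mathcal{F}(G,p)$ always), after which the equal-dimensional embedded orbit is open in the solution manifold regardless of what that common dimension is. With these two corrections your proof is sound and essentially coincides with the paper's.
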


In 1978/9 L.~Asimow and B.~Roth proved Theorem \ref{asimowroth} in the special case where $X$ is Euclidean \cite{asiroth} \cite{asirothtwo}. Recently research has been undertaken into framework rigidity in non-Euclidean normed spaces, in particular spaces with $\ell_p$ norms ($p \in [1,\infty]$) \cite{noneuclidean}, polyhedral norms \cite{polyhedra} and matrix norms such as the Schatten $p$-norms \cite{matrixnorm}. For recent research into infinitesimal rigidity we refer the reader to \cite{symmetry} \cite{jordansymmetry} for frameworks with symmetry, \cite{molecular} \cite{bodybar} for infinitesimal rigidity concerning alternative types of frameworks and 
\cite{twotwo} for frameworks on surfaces.

In Section \ref{trivial section} we will present a rigorous study of the orbit and the trivial motion space of a set of points. We will give an upper bound for the dimension of the space of trivial motions which will be achievable by most placements. Utilising this we shall in Section \ref{3} prove Theorem \ref{asimowroth}. 

In Section \ref{small section} we shall obtain further bounds on the dimension of the space of trivial infinitesimal motions for frameworks that lie on some hyperplane of the normed space. These results will allow us to prove that no \textit{small} framework (a framework with less vertices than the dimension of the normed space plus one) on two or more vertices is infinitesimally rigid in a non-Euclidean space (see Theorem \ref{smallflexible}).

\section{Preliminaries}\label{preliminaries}

All normed spaces $(X, \| \cdot \|)$ shall be assumed to be over $\mathbb{R}$ and finite dimensional; further we shall denote a normed space by $X$ when there is no ambiguity. For any normed space $X$ we shall use the notation $B_r(x)$, $B_r[x]$ and $S_r[x]$ for the open ball, closed ball and the sphere with centre $x$ and radius $r > 0$ respectively. We shall define a normed space to be a \textit{Euclidean space} if its norm is generated by an inner product, otherwise $X$ is a \textit{non-Euclidean (normed) space}. 

Given normed spaces $X,Y$ we shall denote by $L(X,Y)$ the normed space of all linear maps from $X$ to $Y$ with the operator norm $\|\cdot\|_{\text{op}}$ and $A(X, Y)$ to be space of all affine maps from $X$ to $Y$ with the norm topology. If $X=Y$ we shall abbreviate to $L(X)$ and $A(X)$ and if $Y= \mathbb{R}$ with the standard norm we define $X^*:= L(X, \mathbb{R})$ and refer to the operator norm as $\| \cdot \|$ when there is no ambiguity. We denote by $\iota$ the identity map on $X$.

For a $C^1$-differentiable manifold $M$ we shall denote by $T_x M$ the tangent space of $M$ at $x \in M$ and $TM := \bigcup_{x \in M} T_x M$. For a general reference on the theory of manifolds we refer the reader to \cite[Section 3]{manifold}.

\subsection{Differentiation}

For normed spaces $X$, $Y$ and $U \subset X$, $V \subset Y$ we define a map $f:U \rightarrow V$ to be \textit{(Fr\'{e}chet) differentiable} at $x_0 \in U^\circ$ (the interior of $U$) if there exists a linear map $df(x_0):X \rightarrow Y$ such that
\begin{align*}
\frac{\|f(x_0+h) - f(x_0) - df(x_0)h\|_Y}{\|h\|_X} \rightarrow 0
\end{align*}
as $h \rightarrow 0$; we refer to $df(x_0)$ as the \textit{(Fr\'{e}chet) derivative of $f$ at $x_0$}. If $U' \subset U^\circ$ is open and $f$ is differentiable at all points in $U'$ we say that $f$ is \textit{differentiable on $U'$}. If $f$ is differentiable on $U'$ and the map
\begin{align*}
df : U' \rightarrow L(X,Y), ~ x \mapsto df(x)
\end{align*}
is continuous then we say that $f$ is \text{$C^1$-differentiable} on $U'$ and define $df$ to be the \textit{$C^1$-derivative} of $f$; if $U'=U$ we just say that $f$ is $C^1$-differentiable. For all $k \in \mathbb{N}$ we define inductively $d^k f := d (d^{k-1} f)$ where $d^1f :=df$ and $d^0f := f$; by this we define $f$ to be \textit{$C^k$-differentiable} if $d^k f$ exists and is continuous. If $f$ is $C^k$-differentiable for all $k \in \mathbb{N} \cup \{0\}$ we say $f$ is \textit{$C^\infty$-differentiable} or \textit{smooth}. If $f$ is $C^k$-differentiable and bijective with $C^k$-differentiable inverse we say that $f$ is a \textit{$C^k$-diffeomorphism} or \textit{smooth diffeomorphism} if $k = \infty$. 

Some of the results referenced refer specifically to \textit{G\^{a}teaux differentiation}, however we will only consider Lipschitz maps between finite dimensional normed spaces and in this case G\^{a}teaux differentiability is equivalent to differentiability by \cite[Proposition 4.3]{nonlinear}. If we were to observe the connection between the rigidity map and the rigidity operator of an infinite framework this would not hold to be true.

For $C^k$-manifolds $M$ and $N$ with $k \in \mathbb{N} \cup \{0, \infty \}$ we define a map $f :M \rightarrow N$ to be \text{$C^k$-differentiable} if for any $x \in M$ and chart $(V, \psi)$ of $N$ with $f(x) \in N$ there exists a chart $(U, \phi)$ of $M$ such that $x \in U$, $f(U) \subset V$ and $\psi \circ f \circ \phi^{-1}$ is $C^k$-differentiable on $U$. If $f$ also has a $C^k$-differentiable inverse then it is a \textit{$C^k$-diffeomorphism}. For $k \geq 1$ we may define for each $x \in M$ the maps $df(x) : T_x M \rightarrow T_{f(x)} N$ and $df : TM \rightarrow TN$ that will be consistent if $M,N$ are normed spaces, we refer the reader to \cite[Section 3.3]{manifold} for more detail. 

\begin{remark}
If we have a continuous path $\alpha :(a,b) \rightarrow X$ that is differentiable at $t \in (a,b)$ with differential $\alpha'(t)$ in the traditional sense i.e
\begin{align*}
\alpha'(t) := \lim_{h \rightarrow 0}  \frac{\alpha(t+h) - \alpha(t)}{h},
\end{align*}
then $\alpha'(t) = d \alpha (t) (1)$.
\end{remark}

\subsection{Support functionals, smoothness and strict convexity}

Let $x \in X$ and $f \in X^*$, then we say that $f$ is \textit{support functional} of $x$ if $\|f\| = \|x\|$ and $f(x) = \|x\|^2$. By an application of the Hahn-Banach theorem it can be shown that every point must have a support functional. 

We say that a non-zero point $x$ is \textit{smooth} if it has a unique support functional and define $\smooth \subseteq X \setminus \{0\}$ to be the set of smooth points of $X$. If $\smooth \cup \{0\} =X$ then we say that $X$ is \textit{smooth}. We define a norm to be \textit{strictly convex} if $\|tx +(1-t) y\| < 1$ for all distinct $x,y \in S_1[0]$ and $t \in (0,1)$.

The \textit{dual map} of $X$ is the map $\varphi : \smooth \cup \{0\} \rightarrow X^*$ that sends each smooth point to its unique support functional and $\varphi(0)=0$. It is immediate that $\varphi$ is homogeneous since $f$ is the support functional of $x$ if and only if $a f$ is the support functional of $ax$ for $a \neq 0$.

\begin{remark}
If $X$ is Euclidean with inner product $\left\langle \cdot , \cdot \right\rangle$ then all non-zero points are smooth and we have $\varphi(x) = \left\langle x, \cdot \right\rangle$ where $\left\langle x, \cdot \right\rangle : y \mapsto \left\langle x, y \right\rangle$.
\end{remark}

\begin{proposition}\label{paper1}
For any normed space $X$ the following properties hold:
\begin{enumerate}[(i)]
\item \label{paper1item1} For $x_0 \neq 0$, $x_0 \in \smooth$ if and only if $x \mapsto \|x\|$ is differentiable at $x_0$.

\item \label{paper1item3} If $x \mapsto \|x\|$ is differentiable at $x_0$ then it has derivative $\frac{1}{\|x_0\|}\varphi(x_0)$.

\item \label{paper1item0} The set $\smooth$ is dense in $X$ and $\smooth^c$ has measure zero with respect to the Lebesgue measure on $X$

\item \label{paper1item2} The map $\varphi$ is continuous.
%
%\item \label{paper1item3} Any subspace of a smooth (strictly convex) space is a smooth (strictly convex) space.
%
%\item \label{paper1item4} If $(X, \| \cdot \|)$ is smooth and strictly convex then the dual map is a homeomorphism.
\end{enumerate}
\end{proposition}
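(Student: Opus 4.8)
The backbone of my plan is the observation that $N : X \to \mathbb{R}$, $N(x) = \|x\|$, is a convex function, and that the support functionals of a nonzero point are exactly the subgradients of $N$ after rescaling. First I would record this correspondence: for $x_0 \neq 0$ one checks directly from the subgradient inequality $N(y) \geq N(x_0) + g(y - x_0)$ (taking $y = 0$ and $y = 2x_0$) that $g$ is a subgradient of $N$ at $x_0$ if and only if $\|g\| = 1$ and $g(x_0) = \|x_0\|$, and these are precisely the functionals $f/\|x_0\|$ for support functionals $f$ of $x_0$. For the forward implication of (\ref{paper1item1}) together with (\ref{paper1item3}), suppose $N$ is differentiable at $x_0$ with derivative $\ell = dN(x_0)$. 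Given any support functional $f$ and writing $g = f/\|x_0\|$, the estimate $g(y) \leq \|y\|$ gives $N(x_0 + tv) - N(x_0) \geq t\, g(v)$; dividing by $t$ and letting $t \to 0^{\pm}$ forces $\ell(v) = g(v)$ for every $v \in X$. Hence $g$, and therefore $f = \|x_0\|\,\ell$, is unique, so $x_0 \in \smooth$ and $\ell = \tfrac{1}{\|x_0\|}\varphi(x_0)$, which is exactly (\ref{paper1item3}).

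For the reverse implication of (\ref{paper1item1}) I would argue via convexity. The one-sided directional derivative $N'(x_0; v) = \lim_{t \to 0^+}\big(N(x_0 + tv) - N(x_0)\big)/t$ exists for all $v$ and equals $\max\{\, g(v) : g \in \partial N(x_0)\,\}$. When $x_0 \in \smooth$ the subdifferential is the singleton $\{\varphi(x_0)/\|x_0\|\}$, so $N'(x_0;\cdot)$ is linear in $v$, i.e. $N$ is G\^ateaux differentiable at $x_0$. Since $N$ is Lipschitz and $X$ is finite dimensional, G\^ateaux differentiability upgrades to Fr\'echet differentiability by \cite[Proposition 4.3]{nonlinear}, completing (\ref{paper1item1}).

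For (\ref{paper1item0}) I would invoke the classical fact that a convex function on a finite dimensional space is differentiable outside a Lebesgue null set. By (\ref{paper1item1}), this exceptional set together with $\{0\}$ is exactly $\smooth^c$, which is therefore null; density of $\smooth$ follows since no nonempty open ball, having positive measure, can sit inside a null set. For (\ref{paper1item2}), continuity at $0$ is immediate from $\|\varphi(x)\| = \|x\| \to 0$. For a nonzero $x \in \smooth$ and a sequence $x_n \to x$ in $\smooth \cup \{0\}$, the bound $\|\varphi(x_n)\| = \|x_n\|$ confines $(\varphi(x_n))$ to a compact subset of $X^*$; any subsequential limit $f$ satisfies $\|f\| = \|x\|$ and $f(x) = \|x\|^2$ by continuity of the norm and of evaluation, so $f$ is a support functional of $x$ and hence $f = \varphi(x)$ by smoothness. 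As every convergent subsequence has the same limit, the whole sequence converges and $\varphi$ is continuous at $x$.

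I expect the reverse direction of (\ref{paper1item1}) to be the main obstacle, as it is the one step genuinely requiring convex-analytic input: the passage from uniqueness of the support functional to G\^ateaux differentiability through the directional-derivative formula, and then from G\^ateaux to Fr\'echet differentiability via finite dimensionality. Everything else is either a direct consequence of the forward computation or a routine compactness and measure-theoretic argument.
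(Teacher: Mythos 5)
Your proposal is correct, but it takes a genuinely more self-contained route than the paper, whose proof is almost entirely citation-based: the paper disposes of (\ref{paper1item1}) and (\ref{paper1item3}) by quoting \cite[Lemma 1]{maxwell}, and of (\ref{paper1item0}) and (\ref{paper1item2}) by quoting \cite[Theorem 25.5]{rockafellar}. Your argument for (\ref{paper1item1}) and (\ref{paper1item3}) re-derives the quoted lemma from first principles: the identification of rescaled support functionals with subgradients of the norm, the support-function formula for one-sided directional derivatives, and the G\^ateaux-to-Fr\'echet upgrade for Lipschitz maps in finite dimensions (a fact the paper itself records via \cite[Proposition 4.3]{nonlinear}). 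For (\ref{paper1item0}) you and the paper ultimately lean on the same classical input, namely a.e.\ differentiability of finite convex functions. The real divergence is (\ref{paper1item2}): the paper extracts continuity of $\varphi$ on $\smooth$ from the gradient-continuity clause of Rockafellar's Theorem 25.5, whereas you give an elementary compactness argument --- boundedness of $(\varphi(x_n))$ via $\|\varphi(x_n)\| = \|x_n\|$, closedness of the support-functional conditions under limits, and uniqueness of the support functional at a smooth point forcing all subsequential limits to coincide. What the paper's route buys is brevity; what yours buys is independence from the convex-analysis literature beyond two standard facts, and a continuity proof that works verbatim in any setting where support functionals are unique, without invoking gradient-continuity theory. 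One small point of hygiene: in your (\ref{paper1item2}) argument the sequence $x_n \to x \neq 0$ may contain the point $0$, but since eventually $x_n \neq 0$ this does not affect the limit, as your argument implicitly assumes.
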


\begin{proof} (\ref{paper1item1}) \& (\ref{paper1item3}): By \cite[Lemma 1]{maxwell}, $x \mapsto \|x\|$ is differentiable at $x_0$ if and only if $x_0 \in \smooth$ with derivative $\frac{1}{\|x_0\|}\varphi(x_0)$.

(\ref{paper1item0}): The result follows from \ref{paper1item1} and \cite[Theorem 25.5]{rockafellar} as $x \mapsto \|x\|$ is convex.

(\ref{paper1item2}): By \cite[Theorem 25.5]{rockafellar}, the map $x \mapsto \frac{1}{\|x\|}\varphi(x)$ is continuous on $\smooth$, thus $\varphi$ is continuous on $\smooth$ also. As $\varphi(x) = \| x\|$ it follows that $\varphi$ is continuous at $0 \in X^*$ also as required.
\end{proof}

\subsection{Isometry groups}

We shall define $\Iso (X, \| \cdot \|)$ to be the \textit{group of isometries} of $(X, \| \cdot \|)$ and $\Isolin (X, \| \cdot \|)$ to be the \textit{group of linear isometries} of $X$ with the group actions being composition; we shall denote these as $\Iso (X)$ and $\Isolin (X)$ if there is no ambiguity. It can be seen by Mazur-Ulam's theorem \cite{minkowski} that all isometries of a finite dimensional normed space are affine i.e.~each isometry is the unique composition of a linear isometry followed by a translation, thus $\Iso (X)$ has the topology inherited from $A(X)$. It follows from the Closed Subgroup theorem \cite[Theorem 5.1.14]{manifold} that for any normed space the group of isometries is a \textit{Lie group} (a smooth finite dimensional manifold with smooth group operations) while the group of linear isometries is a compact Lie group since it is closed and bounded in $L(X)$.

\begin{lemma}\label{paperisolem}
Let $X$ be a $d$-dimensional normed space, then the following holds:
\begin{enumerate}[(i)]
\item \label{paperisolemitem1} There exists a unique Euclidean space $(X, \| \cdot \|_2)$ such that $\Iso (X, \| \cdot\|)$ is a subgroup and a closed smooth submanifold of $\Iso(X, \| \cdot\|_2)$ and $\Isolin (X, \| \cdot\|)$ is a subgroup and a closed smooth submanifold of $\Isolin(X, \| \cdot\|_2)$.

\item \label{paperisolemitem2} If $X$ is Euclidean then:
\begin{enumerate}[(a)]
\item $\dim \Iso (X) = \frac{d(d+1)}{2}$,

\item $\dim \Isolin (X) = \frac{d(d-1)}{2}$.
\end{enumerate}

\item \label{paperisolemitem3} If $X$ is non-Euclidean then:
\begin{enumerate}[(a)]
\item $d \leq \dim \Iso (X) \leq \frac{d(d-1)}{2} +1$,

\item $0 \leq \dim \Isolin (X) \leq \frac{(d-1)(d-2)}{2} +1$.
\end{enumerate}
\end{enumerate}
\end{lemma}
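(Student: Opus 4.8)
The plan is to base the entire argument on a single canonical Euclidean structure coming from the John ellipsoid, and then to extract all dimension information from the orbit geometry on the associated round sphere.

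For (\ref{paperisolemitem1}) I would let $E$ be the unique ellipsoid of maximal volume contained in the unit ball $B_1[0]$ of $(X,\|\cdot\|)$ (the John ellipsoid) and define $\|\cdot\|_2$ to be the Euclidean norm whose unit ball is $E$. The decisive point is that every $g\in\Isolin(X,\|\cdot\|)$ maps $B_1[0]$ onto itself, hence carries $E$ to another maximal-volume ellipsoid inside $B_1[0]$; by uniqueness of the John ellipsoid $g(E)=E$, so $g$ preserves $\|\cdot\|_2$ and $\Isolin(X,\|\cdot\|)\le\Isolin(X,\|\cdot\|_2)$. Since the latter is a compact Lie group in which $\Isolin(X,\|\cdot\|)$ is closed, the Closed Subgroup theorem realises it as a closed smooth submanifold. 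By Mazur--Ulam every isometry is affine, so $\Iso(X,\|\cdot\|)=X\rtimes\Isolin(X,\|\cdot\|)$ embeds in $\Iso(X,\|\cdot\|_2)=X\rtimes\Isolin(X,\|\cdot\|_2)$ compatibly, giving the second submanifold claim. The uniqueness of $(X,\|\cdot\|_2)$ I would read as the canonical choice furnished by the John ellipsoid, which is unique by its extremal definition.

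Part (\ref{paperisolemitem2}) is then bookkeeping. Identifying $\Isolin(X,\|\cdot\|_2)$ with $O(d)$, its Lie algebra is the space of skew-symmetric $d\times d$ matrices, so $\dim\Isolin(X)=\binom{d}{2}=\frac{d(d-1)}{2}$; and since $\Iso(X)=X\rtimes\Isolin(X)$, we get $\dim\Iso(X)=d+\frac{d(d-1)}{2}=\frac{d(d+1)}{2}$.

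For (\ref{paperisolemitem3}) the lower bounds are immediate: the translations form a $d$-dimensional subgroup of $\Iso(X)$, so $\dim\Iso(X)\ge d$, and $\dim\Isolin(X)\ge 0$ is trivial. The content is the upper bound, and the key observation is that $G:=\Isolin(X,\|\cdot\|)$ cannot act transitively on the round sphere $S:=S_1[0]$ of $(X,\|\cdot\|_2)$: if it did, then $\|\cdot\|$ would be constant on $S$ (it is $G$-invariant), forcing $\|\cdot\|$ to be a scalar multiple of $\|\cdot\|_2$ and hence $X$ to be Euclidean. Replacing $G$ by its identity component I may assume it compact and connected, and as $S$ is connected, non-transitivity caps every orbit dimension at $d-2$. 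I would then pick a principal orbit $G\cdot x\subseteq S$ of dimension $k\le d-2$ with principal isotropy $H=\stab_G(x)$. By the Slice theorem the principal isotropy acts trivially on a slice, so $H$ fixes $x$ together with the normal space to $G\cdot x$ inside $T_xS$ pointwise; being orthogonal, $H$ can act only on the $k$-dimensional tangent space $T_x(G\cdot x)$, so $H\hookrightarrow O(k)$ and $\dim H\le\frac{k(k-1)}{2}$. The orbit--stabiliser relation then gives
\[
\dim G=k+\dim H\le k+\tfrac{k(k-1)}{2}=\tfrac{k(k+1)}{2}\le\tfrac{(d-2)(d-1)}{2},
\]
which yields the stated bound on $\dim\Isolin(X)$, and finally $\dim\Iso(X)=d+\dim\Isolin(X)\le d+\frac{(d-1)(d-2)}{2}=\frac{d(d-1)}{2}+1$.

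The hard part will be the upper bound in (\ref{paperisolemitem3}): one must correctly exploit the principal-orbit structure to control $H$, since it is precisely the triviality of the slice representation on a principal orbit that prevents $H$ from being as large as $O(d-1)$, and one must check that non-transitivity of $G$ genuinely bounds \emph{every} orbit dimension by $d-2$ rather than only the principal one. The remaining steps are standard Lie-group computations or direct consequences of the John-ellipsoid construction in (\ref{paperisolemitem1}).
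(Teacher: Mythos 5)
Your argument is sound, but it takes a genuinely different and more self-contained route than the paper, whose proof of this lemma consists entirely of citations: part (\ref{paperisolemitem1}) is quoted from Thompson's \emph{Minkowski Geometry} (Corollary 3.3.4) plus the Closed Subgroup theorem, part (\ref{paperisolemitem2}) from Hall's book on Lie groups, and part (\ref{paperisolemitem3}) from Montgomery--Samelson's Lemma 4 on transformation groups of spheres. Your John-ellipsoid construction for (\ref{paperisolemitem1}) is precisely the argument underlying Thompson's result (you should note explicitly that a linear isometry of $(X,\|\cdot\|)$ preserves the unit ball and hence has determinant $\pm 1$, so it preserves volume and carries maximal-volume inscribed ellipsoids to maximal-volume inscribed ellipsoids), so there you have unpacked the citation rather than changed the proof. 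Where you genuinely diverge is (\ref{paperisolemitem3}): instead of invoking Montgomery--Samelson you run a principal-orbit/slice argument, and it is correct as written. Non-transitivity together with invariance of domain does cap \emph{every} orbit dimension at $d-2$ (a compact $(d-1)$-dimensional orbit in the connected $(d-1)$-manifold $S$ would be clopen, hence all of $S$ -- this is the same argument the paper itself uses later in its Lemma on $\mathcal{O}(x_0)$), and for a principal orbit the slice representation of the isotropy group $H$ is trivial, so $H$ acts faithfully and orthogonally on the $k$-dimensional orbit tangent space, giving $\dim G = k + \dim H \le k(k+1)/2 \le (d-1)(d-2)/2$. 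Note that this is strictly \emph{sharper} than the stated bound $(d-1)(d-2)/2 + 1$ (the Montgomery--Samelson bound covers more general, possibly non-linear, sphere actions); since your bound implies the stated one, this is a feature rather than a defect, and cylinder norms invariant under $O(d-1)$ show your linear bound is tight.

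The one soft spot is the uniqueness claim in (\ref{paperisolemitem1}). You establish existence via the John ellipsoid and then declare uniqueness to mean ``the canonical choice,'' which is not a proof of the statement as written. Read literally, the stated uniqueness is in fact problematic: if $\Isolin(X,\|\cdot\|) = \{\pm\iota\}$, then \emph{every} Euclidean norm on $X$ satisfies the subgroup-and-closed-submanifold property, because any closed subgroup of a Lie group is automatically an embedded closed submanifold by the Closed Subgroup theorem; so the stated property does not single out $\|\cdot\|_2$, and what is actually unique is the extremal ellipsoid itself. The paper inherits this imprecision from its citation, so your reading is defensible, but a complete write-up should either phrase the uniqueness as uniqueness of the John (or L\"owner) ellipsoid or drop that clause.
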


\begin{proof}
(\ref{paperisolemitem1}): This follows from \cite[Corollary 3.3.4]{minkowski} and the Closed Subgroup theorem \cite[Theorem 5.1.14]{manifold}.

(\ref{paperisolemitem2}): See \cite[Section 2.5.5]{lie} and \cite[Section 2.5.9]{lie}.

(\ref{paperisolemitem3}): \cite[Lemma 4]{spheres}.

\end{proof}

\subsection{Placements and bar-joint frameworks}
We shall assume that all graphs are simple i.e.~no loops or parallel edges, however we will allow them to have a countably infinite vertex set unless we explicitly state otherwise. We will denote $V(G)$ and $E(G)$ to be the vertex and edge sets of $G$ respectively. If $H$ is a subgraph of $G$ we will represent this by $H \subseteq G$. For a set $S$ we shall denote by $K_S$ the complete graph on the set $S$.

Let $X$ be a normed space. For any set $S$ we say $p \in X^S$ is a \textit{placement of $S$ in $X$}; we will denote this $(p,S)$ if we need to clarify what set $p$ is the placement of. For a graph $G$ we say $p$ is a \textit{placement of $G$ in $X$} if $p$ is a placement of $V(G)$. We define a \textit{(bar-joint) framework} to be a pair $(G,p)$ where $G$ is a graph and $p$ is a placement of $G$ in $X$. For all $X$ and $S$ we will gift $X^S$ the product topology from $X$; if $|S|< \infty$ we define the norm
\begin{align*}
\|\cdot \|_S : (x_v)_{v \in S} \mapsto \max_{v \in S} \|x_v\| 
\end{align*}
on $X^S$. For $x \in X^S$ and $T \subset S$ we define $x|_{T} := (x_v)_{v \in T} \in X^T$.

A placement $p$ is \textit{spanning} in $X$ if the set $\{p_v: v\in S\}$ affinely spans $X$. A placement $p$ is in \textit{general position} if for any choice of distinct vertices $v_0, v_1, \ldots, v_n \in S$ ($n \leq \dim X$) the set $\{p_{v_i} : i = 0,1 ,\ldots,n\}$ is affinely independent. It is immediate that if $p$ is in general position and $|S| \geq \dim X +1$ then $p$ is spanning. We denote the set of placements of $S$ in general position by $\mathcal{G}(S) \subseteq X^S$; likewise for any graph $G$ we let $\mathcal{G}(G) := \mathcal{G}(V(G))$. If $S$ is finite then $\mathcal{G}(S)$ is an open dense subset of $X^S$ and $X^S \setminus \mathcal{G}(S)$ has measure zero; we can see this as $\mathcal{G}(S)$ is the complement of an algebraic set. 

For placements $(q,T)$, $(p,S)$ we say $(q,T)$ is a \textit{subplacement} of $(p,S)$ (or $(q,T) \subseteq (p,S)$) if $T \subseteq S$ and $p_v = q_v$ for all $v \in T$. For frameworks $(H,q)$ and $(G,p)$ we say $(H,q)$ is a \textit{subframework} of $(G,p)$ (or $(H,q) \subseteq (G,p)$) if $H \subseteq G$ and $p_v = q_v$ for all $v \in V(H)$. If $H$ is also a spanning subgraph we say that $(H,q)$ is a \textit{spanning subframework} of $(G,p)$.

\section{Trivial motions of placements}\label{trivial section}

\subsection{Structure of the orbit of a placement}

Let $\Gamma$ be a Lie group and $M$ a (finite dimensional) smooth manifold. If there exists a smooth group action 
\begin{align*}
\phi: \Gamma \times M \rightarrow M, ~ (g,x) \mapsto g.x
\end{align*}
we say that $\phi$ is a \textit{Lie group action of $\Gamma$ on $M$}. We define the following for all $x \in M$:
\begin{enumerate}[(i)]
\item the \textit{stabiliser} of $x$, $\stab_x := \{g \in \Gamma : g.x = x\}$,

\item the \textit{orbit} of $x$, $\mathcal{O}_x := \{g.x : g \in \Gamma \}$,

\item $\phi_x: \Gamma \rightarrow \mathcal{O}_x, ~ g \mapsto g.x$.
\end{enumerate}
We say that $\Gamma$ acts \textit{properly} on $M$ if the map 
\begin{align*}
\theta: \Gamma \times M \rightarrow M \times M, ~ (g, x) \mapsto (\phi(g,x),x)
\end{align*}
is proper i.e.~the preimage of any compact set is compact. If $H$ is a closed subgroup of $\Gamma$ then by \cite[Theorem 5.1.16]{manifold} $\Gamma/H$ (the set of left cosets $gH$, $g \in \Gamma$) has a unique manifold structure such that the quotient map $\pi : \Gamma \rightarrow \Gamma/H$ is a smooth surjective \textit{submersion} i.e.~$d\pi(g)$ is surjective for all $g \in \Gamma$.

\begin{lemma}\cite[Corollary 4.1.22]{mechanics}\label{orbits}
Let $\phi$ be a Lie group action of $\Gamma$ on $M$. Suppose $\Gamma$ acts properly on $M$, then $\mathcal{O}_x$ is a closed smooth submanifold of $M$ that is diffeomorphic to $\Gamma/\stab_x$ under the map $\tilde{\phi}_x : g\stab_x \mapsto g.x$.
\end{lemma}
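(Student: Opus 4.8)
The plan is to realise $\tilde{\phi}_x$ as a smooth, injective, proper immersion, and then invoke the fact that any such map is a closed embedding, so that its image $\mathcal{O}_x$ is a closed smooth submanifold of $M$ diffeomorphic to $\Gamma/\stab_x$ via $\tilde{\phi}_x$.

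First I would record the structural facts that make the statement meaningful. Since $\phi_x : \Gamma \to M$ is continuous, $\stab_x = \phi_x^{-1}(x)$ is a closed subgroup of $\Gamma$, so $\Gamma/\stab_x$ carries its canonical manifold structure with smooth surjective submersion $\pi : \Gamma \to \Gamma/\stab_x$. The map $\tilde{\phi}_x$ is well-defined and injective, since $g\stab_x = h\stab_x$ holds precisely when $h^{-1}g \in \stab_x$, that is, exactly when $g.x = h.x$; and it is surjective onto $\mathcal{O}_x$ by definition. Finally $\tilde{\phi}_x$ is smooth because $\phi_x = \tilde{\phi}_x \circ \pi$ is smooth and $\pi$ is a surjective submersion, so smoothness descends to the quotient by the characteristic property of submersions.

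Next I would show $\tilde{\phi}_x$ is an immersion. The key observation is that $\phi_x$ has constant rank: equivariance gives $\phi_x \circ L_g = \ell_g \circ \phi_x$, where $L_g$ denotes left translation by $g$ on $\Gamma$ and $\ell_g : m \mapsto g.m$ is a diffeomorphism of $M$; as $L_g$ and $\ell_g$ are diffeomorphisms, $\rank d\phi_x$ agrees at $e$ and at every $g \in \Gamma$. By the constant rank theorem $\ker d\phi_x(e) = T_e \stab_x = \ker d\pi(e)$, and differentiating $\phi_x = \tilde{\phi}_x \circ \pi$ at $e$ together with surjectivity of $d\pi(e)$ then forces $d\tilde{\phi}_x(e\stab_x)$ to be injective. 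Equivariance of $\tilde{\phi}_x$ with respect to left translation on $\Gamma/\stab_x$ propagates injectivity of the differential from the identity coset to every point.

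The crucial step, and the one where the properness hypothesis is used, is to show $\tilde{\phi}_x$ is a proper map. Given compact $K \subseteq M$, the set $\theta^{-1}(K \times \{x\}) = \{(g,x) : g.x \in K\}$ is compact because $\theta$ is proper, and it is homeomorphic to $\phi_x^{-1}(K)$; hence $\phi_x^{-1}(K)$ is compact in $\Gamma$. Since $\pi$ is surjective, $\tilde{\phi}_x^{-1}(K) = \pi(\phi_x^{-1}(K))$ is the continuous image of a compact set and therefore compact, so $\tilde{\phi}_x$ is proper. A proper map into the locally compact Hausdorff manifold $M$ is closed, so the injective $\tilde{\phi}_x$ is a homeomorphism onto a closed subset, and a topological embedding that is also an immersion is a smooth embedding. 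Therefore $\mathcal{O}_x = \im \tilde{\phi}_x$ is a closed smooth submanifold of $M$ and $\tilde{\phi}_x$ is a diffeomorphism onto it. I expect the constant-rank identification of $\ker d\phi_x(e)$ and the passage from ``proper injective immersion'' to ``closed embedding'' to be the two points demanding the most care.
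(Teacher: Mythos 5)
Your proof is correct. Note, however, that the paper offers no proof of this lemma at all: it is imported verbatim from the cited reference (Abraham--Marsden, Corollary 4.1.22), so there is nothing internal to compare against. Your argument --- constant rank via equivariance to get that $\tilde{\phi}_x$ is an injective immersion, properness of the action to get that $\tilde{\phi}_x$ is a proper map, and then the standard fact that a proper injective immersion into a locally compact Hausdorff manifold is a closed embedding --- is precisely the classical textbook proof of this result, so in effect you have supplied the proof the paper delegates to its reference.
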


For any set $S$, $x \in X^S$ and affine map $g \in A(X)$ we define $g.x := (g(x_v))_{v \in S}$. With this notation we define for any $S$ the map
\begin{align*}
\phi: \Iso (X) \times X^S \rightarrow X^S , ~ x \mapsto g.x .
\end{align*}
If $|S|< \infty$ then this is a Lie group action of $\Iso (X)$ on $X^S$; we shall always refer to this group action if we mention $\Iso (X)$ acting on $X^S$.

\begin{lemma}\label{proper}
For any $X$ and $|S| < \infty$ the group of isometries $\Iso (X)$ acts properly on $X^S$.
\end{lemma}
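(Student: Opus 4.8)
The plan is to verify properness directly from the definition: given a compact set $K \subseteq X^S \times X^S$, I will show that $\theta^{-1}(K)$ is compact, where $\theta(g,x) = (g.x, x)$ as in the statement. Throughout I assume $S \neq \emptyset$, since otherwise $X^S$ is a single point and the claim would reduce to the compactness of $\Iso(X)$, which fails in general; in the framework setting $S$ is always the nonempty vertex set of a graph. Because $\phi$ is a Lie group action it is in particular continuous, hence so is $\theta$, and since $K$ is compact and $X^S \times X^S$ is Hausdorff, $K$ is closed and therefore $\theta^{-1}(K)$ is closed. It thus suffices to exhibit a compact subset of $\Iso(X) \times X^S$ containing $\theta^{-1}(K)$, as a closed subset of a compact set is compact.

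To build such a set I would use the Mazur--Ulam structure of isometries recalled before Lemma~\ref{paperisolem}: each $g \in \Iso(X)$ is uniquely of the form $g : y \mapsto A_g y + c_g$ with $A_g \in \Isolin(X)$ and $c_g \in X$, and $g \mapsto (A_g, c_g)$ is a homeomorphism of $\Iso(X)$ onto $\Isolin(X) \times X$. Denote by $K_1, K_2$ the compact images of $K$ under the two coordinate projections of $X^S \times X^S$, and fix any $v_0 \in S$. If $(g,x) \in \theta^{-1}(K)$ then $x \in K_2$ and $g.x \in K_1$, so the $v_0$-coordinates satisfy $\|x_{v_0}\| \leq R$ and $\|A_g x_{v_0} + c_g\| \leq R'$, where $R, R'$ bound the (compact, hence bounded) projections of $K_2, K_1$ onto the $v_0$-factor. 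Since $A_g$ is a linear isometry, $\|A_g x_{v_0}\| = \|x_{v_0}\| \leq R$, and the triangle inequality yields $\|c_g\| \leq \|A_g x_{v_0} + c_g\| + \|A_g x_{v_0}\| \leq R' + R =: C$.

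Consequently $\theta^{-1}(K)$ is contained in $\{g \in \Iso(X) : \|c_g\| \leq C\} \times K_2$. Under the homeomorphism above the first factor corresponds to $\Isolin(X) \times B_C[0]$, which is compact: $\Isolin(X)$ is a compact Lie group (being closed and bounded in $L(X)$, as noted in the excerpt) and $B_C[0] \subseteq X$ is a closed ball in a finite-dimensional space. Together with the compactness of $K_2$ this produces a compact superset of $\theta^{-1}(K)$, and being a closed subset of it, $\theta^{-1}(K)$ is compact; hence $\theta$ is proper. The one point requiring care is the control of the translation parts $c_g$: the linear parts $A_g$ are harmless since $\Isolin(X)$ is already compact, so essentially all the work lies in converting the boundedness of $K$ into a uniform bound on $\|c_g\|$, which is exactly what the isometry identity $\|A_g x_{v_0}\| = \|x_{v_0}\|$ supplies.
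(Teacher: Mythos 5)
Your proof is correct and rests on the same mechanism as the paper's: the Mazur--Ulam decomposition $g = T_{c_g} \circ A_g$ into a translation and a linear isometry, the compactness of $\Isolin(X)$ to control the linear parts, and a triangle-inequality estimate that converts boundedness of the configurations into a uniform bound on the translation parts. The only difference is packaging --- the paper verifies properness sequentially, extracting convergent subsequences via Bolzano--Weierstrass for the translations and compactness of $\Isolin(X)$ for the linear parts, whereas you verify the compact-preimage definition directly by trapping $\theta^{-1}(K)$ as a closed subset of the compact set $\bigl(\Isolin(X) \times B_C[0]\bigr) \times K_2$ --- and your explicit exclusion of $S = \emptyset$ (where the statement genuinely fails, since $\Iso(X)$ is not compact) is a point of care that the paper's own argument silently assumes.
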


\begin{proof}
Let $((g_n.p^n,p^n))_{n \in \mathbb{N}}$ be a convergent sequence in the image of $\theta : \Iso(X) \times X^S \rightarrow X^S \times X^S$ with limit $(q,p)$. By Mazur-Ulam's theorem \cite{minkowski} that for each $n \in \mathbb{N}$ there exists $G_n \in \Isolin(X)$ and $x_n \in X$ such that $g_n = T_{x_n} \circ G_n$, where $T_{x_n}$ is the translation map $y \mapsto y+x_n$. As $((g_n.p^n,p^n))_{n \in \mathbb{N}}$ converges then $(g_n.p^n)_{n \in \mathbb{N}}$ and $(p^n)_{n \in \mathbb{N}}$ are bounded in $X^S$, thus $(x_n)_{n \in \mathbb{N}}$ is bounded as
\begin{align*}
\|x_n\| = \|(x_n)_{v \in S} \|_S \leq \|G_n.p^n + (x_n)_{v \in S} \|_S + \|G_n.p^n\|_S = \|g_n.p^n\|_S +\|p^n\|_S;
\end{align*}
it follows by Bolzano-Weierstrass that we have a convergent subsequence $(x_{n_k})_{k \in \mathbb{N}}$ with limit $x \in X$. Since $\Isolin(X)$ is compact, there exists a convergent subsequence $(G_{n_{k_l}})_{l \in \mathbb{N}}$ of $(G_{n_k})_{k \in \mathbb{N}}$ with limit $G \in \Isolin(X)$; this implies $(g_{n_{k_l}})_{l \in \mathbb{N}}$ converges to $g := T_x \circ G$. As $((g_n,p^n))_{n \in \mathbb{N}}$ has a convergent subsequence it follows that $\theta$ is proper as required.
\end{proof}

\begin{lemma}\label{homeo2.0}
Let $p$ be a placement of a finite set in $X$, then $\mathcal{O}_p$ is a closed smooth submanifold of $X^{V(G)}$ and the map
\begin{align*}
\tilde{\phi}_p : \Iso (X)/\stab_p \rightarrow \mathcal{O}_p, ~ g \stab_p \mapsto g.p
\end{align*}
is a smooth diffeomorphism.
\end{lemma}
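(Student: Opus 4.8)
The plan is to obtain this as an immediate consequence of the abstract orbit theorem recorded in Lemma \ref{orbits}, applied to the Lie group $\Gamma = \Iso(X)$, the manifold $M = X^{V(G)}$, and the point $x = p$. Since $V(G)$ is finite, the map $\phi : \Iso(X) \times X^{V(G)} \to X^{V(G)}$, $(g,x) \mapsto g.x$, is already a Lie group action as discussed above: each $g \in \Iso(X)$ is affine by Mazur--Ulam, so $g.x = (G(x_v) + t)_{v \in V(G)}$ depends polynomially (hence smoothly) on the linear part $G$, the translation $t$ and the coordinates $x_v$, and smoothness is inherited from the manifold structure $\Iso(X)$ carries as a closed submanifold of $A(X)$.

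With smoothness in hand, the only genuine hypothesis of Lemma \ref{orbits} left to check is that $\Iso(X)$ acts properly on $X^{V(G)}$ --- but this is precisely the content of Lemma \ref{proper}, whose proof handles the required compactness argument by splitting each isometry into its bounded translation and linear-isometry parts and using compactness of $\Isolin(X)$. It then remains to note that $\stab_p = \phi_p^{-1}(\{p\})$ is the preimage of a point under the continuous map $\phi_p : g \mapsto g.p$, hence a closed subgroup of $\Iso(X)$, so that $\Iso(X)/\stab_p$ carries its quotient manifold structure and the hypotheses of Lemma \ref{orbits} are fully met.

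Invoking Lemma \ref{orbits} then yields at once that $\mathcal{O}_p$ is a closed smooth submanifold of $X^{V(G)}$ and that $\tilde{\phi}_p : g\stab_p \mapsto g.p$ is a smooth diffeomorphism onto $\mathcal{O}_p$, which is exactly the assertion. I do not expect any serious obstacle here: the substantive analytic work --- establishing properness of the action --- has already been carried out in Lemma \ref{proper}, and the remaining verifications (smoothness of the action and closedness of the stabiliser) are routine. The one point worth stating explicitly is that finiteness of $V(G)$ is essential, since it is what makes $\phi$ a Lie group action and underwrites the boundedness estimate driving the properness argument.
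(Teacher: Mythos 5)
Your proposal is correct and follows exactly the paper's route: the paper's proof likewise consists of citing Lemma \ref{proper} for properness and then invoking Lemma \ref{orbits} to obtain both the closed submanifold structure of $\mathcal{O}_p$ and the diffeomorphism $\tilde{\phi}_p$. The extra verifications you spell out (smoothness of the action and closedness of $\stab_p$) are routine and are treated as implicit in the paper.
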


\begin{proof}
By Lemma \ref{proper} and Lemma \ref{orbits} it follows that $\mathcal{O}_p$ is a closed smooth submanifold of $X^{V(G)}$ diffeomorphic to $\Iso (X)/\stab_p$ under the diffeomorphism $\tilde{\phi}_p$.
\end{proof}

\begin{lemma}\label{diffeo}
Let $(q,T) \subset (p,S)$ be placements in $X$ where the affine span of $\{p_v :v \in S\}$ is equal to the affine span of $\{q_v :v \in T\}$. If $|T| < \infty$ then $\mathcal{O}_p$ is a smooth manifold that is diffeomorphic to $\mathcal{O}_q$ and the restriction map 
\begin{align*}
\rho:\mathcal{O}_p \rightarrow \mathcal{O}_q, ~ (x_v)_{v \in S} \mapsto (x_v)_{v \in T}
\end{align*}
is a smooth diffeomorphism. 
\end{lemma}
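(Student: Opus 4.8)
The plan is to reduce everything to the finite subplacement $(q,T)$ by first showing that the two stabilisers coincide, $\stab_p = \stab_q$. By Mazur--Ulam every $g \in \Iso(X)$ is affine, and since $\{q_v : v \in T\}$ affinely spans the common affine span $A := \affspan\{p_v : v \in S\} = \affspan\{q_v : v \in T\}$, any affine map is completely determined on $A$ by its values on $\{q_v : v \in T\}$. As $(q,T)$ is a subplacement of $(p,S)$ we have $p_v \in A$ for every $v \in S$; hence $g.q = q$ forces $g$ to fix $A$ pointwise and therefore $g.p = p$, giving $\stab_q \subseteq \stab_p$, and the reverse inclusion follows symmetrically because $\{p_v : v \in S\}$ also affinely spans $A$. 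Thus $\stab_p = \stab_q =: H$, which is a closed subgroup of $\Iso(X)$.

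Since $|T| < \infty$, Lemma \ref{homeo2.0} applies to $(q,T)$: the orbit $\mathcal{O}_q$ is a closed smooth submanifold of $X^T$ and $\tilde{\phi}_q : \Iso(X)/H \to \mathcal{O}_q$ is a smooth diffeomorphism. The quotient $\Iso(X)/H$ is itself a smooth manifold by the quotient theorem for closed subgroups recalled before Lemma \ref{orbits}, so I would equip $\mathcal{O}_p$ with the unique smooth structure making the orbit-stabiliser bijection $\tilde{\phi}_p : \Iso(X)/H \to \mathcal{O}_p$ a diffeomorphism. A direct check using $p|_T = q$ gives $\rho(\tilde{\phi}_p(gH)) = \rho(g.p) = (g.p)|_T = g.q = \tilde{\phi}_q(gH)$, so $\rho \circ \tilde{\phi}_p = \tilde{\phi}_q$ and hence $\rho = \tilde{\phi}_q \circ \tilde{\phi}_p^{-1}$ is a composition of diffeomorphisms. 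This simultaneously shows that $\mathcal{O}_p$ is a smooth manifold and that $\rho$ is a diffeomorphism onto $\mathcal{O}_q$.

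The main obstacle is that $S$ may be infinite, so $X^S$ is infinite dimensional and Lemma \ref{homeo2.0} cannot be invoked for $p$ directly; the structure on $\mathcal{O}_p$ must be imported from the finite model $\Iso(X)/H$ as above, and one must confirm that $\rho$ is genuinely smooth in the ambient coordinates rather than merely through this abstract identification. To settle this I would note that $\rho$ is the restriction of the coordinate projection $X^S \to X^T$, which is linear and hence smooth, while its inverse is the restriction of the reconstruction map $L : X^T \to X^S$, $(y_w)_{w \in T} \mapsto \big( \sum_{w \in T} \lambda_{v,w}\, y_w \big)_{v \in S}$, where $p_v = \sum_{w \in T} \lambda_{v,w}\, q_w$ with $\sum_{w \in T} \lambda_{v,w} = 1$ is any expression of $p_v$ as an affine combination of $\{q_w : w \in T\}$ (possible since these affinely span $A \ni p_v$). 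Affineness of each isometry $g$ yields $g(p_v) = \sum_{w \in T} \lambda_{v,w}\, g(q_w)$, so $L|_{\mathcal{O}_q} = \rho^{-1}$; as both $\rho$ and $\rho^{-1}$ are restrictions of continuous linear maps they are smooth, which confirms the diffeomorphism concretely and pins down the transported manifold structure on $\mathcal{O}_p$.
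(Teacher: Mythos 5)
Your proof is correct, and its decisive step is the same as the paper's. The paper's proof is essentially your third paragraph in compressed form: it defines the finite-dimensional linear spaces $A_p := \{h.p : h \in A(X)\}$ and $A_q := \{h.q : h \in A(X)\}$, observes that the restriction map $\tilde{\rho} : A_p \rightarrow A_q$, $x \mapsto x|_T$, is a continuous linear isomorphism --- whose inverse is exactly your reconstruction map $L$ restricted to $A_q$, since affineness of $h$ gives $h(p_v) = \sum_{w \in T} \lambda_{v,w}\, h(q_w)$ --- and concludes that $\tilde{\rho}^{-1}|_{\mathcal{O}_q}$ is a smooth embedding with image $\mathcal{O}_p$, with Lemma \ref{homeo2.0} applied to the finite placement $(q,T)$ finishing the argument. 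Your first two paragraphs (the stabiliser equality $\stab_p = \stab_q$ and the transport of structure along $\tilde{\phi}_p$) are correct but logically superfluous: once you know that $\rho$ and $\rho^{-1}$ are restrictions of continuous linear maps and that $\mathcal{O}_q$ is a smooth manifold, the submanifold structure on $\mathcal{O}_p$ and the diffeomorphism come for free, which is how the paper argues; indeed the paper reserves the orbit--stabiliser identification for Theorem \ref{homeo2}, where it is \emph{deduced from} this lemma rather than used to prove it. The one point you should make explicit is that $\im L$ is finite dimensional (it is the image of the finite-dimensional space $X^T$ under a linear map), so that ``smooth embedding'' retains its usual finite-dimensional meaning even when $S$ is infinite --- this is precisely the role of the parenthetical ``(finite dimensional)'' in the paper's definition of $A_p$ and $A_q$.
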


\begin{proof}
Define the (finite dimensional) linear spaces
\begin{align*}
A_p := \{ h.p : h \in A(X) \} \qquad A_q := \{ h.q : h \in A(X) \}.
\end{align*}
We note that the linear map $\tilde{\rho}: A_p \rightarrow A_q$ where $\tilde{\rho}(x) := x|_{T}$ is a continuous linear isomorphism. This implies the map $\tilde{\rho}^{-1}|_{\mathcal{O}_q}$ is a smooth embedding into $A_p$ with image $\mathcal{O}_p$. By Lemma \ref{homeo2.0}, $\mathcal{O}_q$ is a smooth manifold, thus $\mathcal{O}_q$ is diffeomorphic to $\mathcal{O}_p$ and $\rho:= \tilde{\rho}|_{\mathcal{O}_p}^{\mathcal{O}_q}$ is a smooth diffeomorphism. 
\end{proof}

We will define a continuous path through a placement $p$ in $X^S$ to be a family $\alpha := (\alpha_v)_{v \in S}$ of continuous paths $\alpha_v : (-\delta,\delta) \rightarrow X$ (for some fixed $\delta >0$) where $\alpha_v(0) = p_v$ for all $v \in S$. If $\alpha(t):= (\alpha_v(t))_{v \in S} \in \mathcal{O}_p$ for all $t \in (-\delta,\delta)$ then $\alpha$ is a \textit{trivial finite motion}. 

Let $u \in X^S$. If there exists a trivial finite motion $\alpha$ of $p$ that is differentiable at $t=0$ and $u_v = \alpha'_v (0)$ for all $v \in S$ then we say that $u$ is a \textit{trivial (infinitesimal) motion of $p$}. For any placement $p$ we shall denote $\mathcal{T}(p)$ to be the the set all trivial infinitesimal motions of $p$.

\begin{theorem}\label{homeo2}
Let $p$ be a placement in $X$, then $\mathcal{O}_p$ is a smooth manifold with tangent space $\mathcal{T}(p)$ at $p$ and 
\begin{align*}
\tilde{\phi}_p : \Iso (X)/\stab_p \rightarrow \mathcal{O}_p, ~ g \stab_p \mapsto g.p
\end{align*}
is a smooth diffeomorphism.
\end{theorem}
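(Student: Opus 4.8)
The plan is to reduce the general (possibly infinite) placement $p$ of a set $S$ to a finite subplacement and then invoke Lemma \ref{homeo2.0} together with Lemma \ref{diffeo}. Since $X$ is finite dimensional, the affine span of $\{p_v : v \in S\}$ is an affine subspace of dimension at most $d := \dim X$, so one may choose a finite subset $T \subseteq S$ with $|T| \leq d+1$ such that $\{p_v : v \in T\}$ affinely spans the same affine subspace. Writing $q := p|_{T}$, the subplacement $(q,T) \subseteq (p,S)$ then satisfies the hypotheses of Lemma \ref{diffeo}, and as $T$ is finite Lemma \ref{homeo2.0} applies to $q$.

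First I would record that $\stab_p = \stab_q$. The inclusion $\stab_p \subseteq \stab_q$ is immediate from $T \subseteq S$. Conversely, any $g \in \stab_q$ fixes each $q_v = p_v$ for $v \in T$; being affine it then fixes their affine span pointwise, and since every $p_v$ ($v \in S$) lies in that affine span by construction, $g$ fixes each $p_v$, so $g \in \stab_p$. Consequently $\Iso(X)/\stab_p = \Iso(X)/\stab_q$. By Lemma \ref{homeo2.0} the map $\tilde{\phi}_q : \Iso(X)/\stab_q \rightarrow \mathcal{O}_q$ is a smooth diffeomorphism, and by Lemma \ref{diffeo} the restriction $\rho : \mathcal{O}_p \rightarrow \mathcal{O}_q$ is a smooth diffeomorphism; in particular $\mathcal{O}_p$ is a smooth manifold. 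Since $\rho(g.p) = g.q$ we have $\rho \circ \tilde{\phi}_p = \tilde{\phi}_q$, whence $\tilde{\phi}_p = \rho^{-1} \circ \tilde{\phi}_q$ is a smooth diffeomorphism, as claimed.

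It remains to identify the tangent space $T_p \mathcal{O}_p$ with $\mathcal{T}(p)$. Here I would use the standard description of the tangent space of an embedded submanifold as the set of velocities of differentiable curves through the point, realising $\mathcal{O}_p$ inside the finite dimensional linear space $A_p := \{h.p : h \in A(X)\}$ as in the proof of Lemma \ref{diffeo}. Any trivial finite motion $\alpha$ that is differentiable at $0$ takes values in $\mathcal{O}_p$ with $\alpha(0)=p$, so $\alpha'(0) \in T_p\mathcal{O}_p$, giving $\mathcal{T}(p) \subseteq T_p\mathcal{O}_p$. For the reverse inclusion, every tangent vector $u \in T_p\mathcal{O}_p$ is realised as $\gamma'(0)$ for some smooth curve $\gamma$ in the manifold $\mathcal{O}_p$ with $\gamma(0)=p$, and such a $\gamma$ is by definition a trivial finite motion of $p$, so $u \in \mathcal{T}(p)$.

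The routine parts are the choice of $T$ and the composition of diffeomorphisms; the point requiring genuine care is that the ambient space $X^S$ is only a topological (product) space when $S$ is infinite, so the manifold and tangent-space statements must be read through the finite dimensional realisation $\mathcal{O}_p \subseteq A_p$ supplied by Lemma \ref{diffeo}. Once this realisation is fixed, the equality $T_p\mathcal{O}_p = \mathcal{T}(p)$ is exactly the curve characterisation of the tangent space of an embedded submanifold, and the essential work has already been carried out in the two preceding lemmas.
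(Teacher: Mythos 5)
Your proposal is correct and follows essentially the same route as the paper: choose a finite subplacement $(q,T)$ with the same affine span, apply Lemma \ref{homeo2.0} to $q$ and Lemma \ref{diffeo} to the restriction map $\rho$, and conclude via $\tilde{\phi}_p = \rho^{-1} \circ \tilde{\phi}_q$ together with the identification of $\mathcal{T}(p)$ as the tangent space. Your explicit verification that $\stab_p = \stab_q$ (needed for the domains of $\tilde{\phi}_p$ and $\tilde{\phi}_q$ to agree) and your curve characterisation of $T_p \mathcal{O}_p$ inside the finite dimensional space $A_p$ merely fill in details the paper leaves implicit, so they strengthen rather than change the argument.
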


\begin{proof}
Choose a finite subplacement $(q,T)$ of $(p,S)$ so that the $p$ and $q$ affinely span the same space, then by Lemma \ref{homeo2.0}, $\mathcal{O}_q$ is a smooth manifold diffeomorphic to $\Iso (X)/\stab_q$ under the smooth diffeomorphism $\tilde{\phi}_q$. By Lemma \ref{diffeo}, $\mathcal{O}_p$ is a smooth manifold diffeomorphic to $\Iso (X)/\stab_p$ and the restriction map $\rho : \mathcal{O}_p \rightarrow \mathcal{O}_q$ is a smooth diffeomorphism. As $\tilde{\phi}_p = \rho^{-1} \circ \tilde{\phi}_q$ then it is also a smooth diffeomorphism. It follows from its definition that $\mathcal{T}(p)$ is the tangent space of $\mathcal{O}_p$ at $p$.
\end{proof}

\begin{corollary}\label{homeo2.1}
The map $\phi_p$ is a smooth submersion and $d \phi_p(\iota) : T_\iota \Iso(X) \rightarrow \mathcal{T}(p)$ is surjective with $d \phi_p(\iota) g= g.p$ for all $g \in T_\iota \Iso (X)$. Further, $\ker d \phi_p(\iota) = T_\iota \stab_p$.
\end{corollary}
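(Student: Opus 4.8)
The plan is to exploit the factorisation $\phi_p = \tilde{\phi}_p \circ \pi$, where $\pi : \Iso(X) \to \Iso(X)/\stab_p$ is the canonical quotient map and $\tilde{\phi}_p$ is the smooth diffeomorphism furnished by Theorem \ref{homeo2}. Since $\stab_p$ is a closed subgroup of $\Iso(X)$, the quotient map $\pi$ is a smooth surjective submersion, so $d\pi(g)$ is surjective for every $g$. As $\tilde{\phi}_p$ is a diffeomorphism its differential is everywhere a linear isomorphism; hence $d\phi_p(g) = d\tilde{\phi}_p(\pi(g)) \circ d\pi(g)$ is surjective for every $g \in \Iso(X)$, and so $\phi_p$ is a smooth submersion. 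Taking $g = \iota$ and using $\phi_p(\iota) = p$ together with $T_p \mathcal{O}_p = \mathcal{T}(p)$ from Theorem \ref{homeo2}, the map $d\phi_p(\iota) : T_\iota \Iso(X) \to \mathcal{T}(p)$ is surjective.

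To obtain the explicit formula $d\phi_p(\iota)g = g.p$, I would observe that $\phi_p$ is the restriction to $\Iso(X)$ of the globally defined linear map $\Phi : A(X) \to X^S$, $h \mapsto (h(p_v))_{v \in S}$; linearity holds because evaluation at the fixed points $p_v$ is linear in $h$ on the vector space $A(X)$. Identifying $T_\iota A(X)$ with $A(X)$ itself (as $A(X)$ is a finite dimensional normed space) and viewing $T_\iota \Iso(X)$ as the corresponding linear subspace, the derivative of the linear map $\Phi$ is $\Phi$ itself. Restricting to the submanifold $\Iso(X)$ then yields $d\phi_p(\iota)g = \Phi(g) = (g(p_v))_{v \in S} = g.p$ for each $g \in T_\iota \Iso(X)$.

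For the kernel, the cleanest route is again the factorisation: since $\tilde{\phi}_p$ is a diffeomorphism, $\ker d\phi_p(\iota) = \ker d\pi(\iota)$. The fibre of the submersion $\pi$ through $\iota$ is exactly the coset $\iota\,\stab_p = \stab_p$, and for a submersion the kernel of the differential at a point equals the tangent space to the fibre through that point; thus $\ker d\pi(\iota) = T_\iota \stab_p$, giving $\ker d\phi_p(\iota) = T_\iota \stab_p$. Alternatively one can argue directly: differentiating a smooth path $\gamma$ in $\stab_p$ with $\gamma(0) = \iota$ at $t = 0$ gives $d\phi_p(\iota)\gamma'(0) = \frac{d}{dt}\big|_{t=0} \gamma(t).p = 0$, so $T_\iota \stab_p \subseteq \ker d\phi_p(\iota)$; equality then follows from the dimension count $\dim \ker d\phi_p(\iota) = \dim \Iso(X) - \dim \mathcal{O}_p = \dim \stab_p$, using that $\phi_p$ is a submersion and that $\mathcal{O}_p \cong \Iso(X)/\stab_p$.

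I expect the only genuinely delicate point to be the derivative formula of the second paragraph: one must check that the intrinsic manifold differential $d\phi_p(\iota)$ of $\phi_p : \Iso(X) \to \mathcal{O}_p$ agrees with the restriction of the ambient linear map $\Phi$, which requires interpreting each tangent vector $g \in T_\iota \Iso(X)$ simultaneously as an affine map in $A(X)$ acting on $p$. This is justified because $\Iso(X)$ carries the subspace topology and smooth structure inherited from $A(X)$, so the inclusion $\Iso(X) \hookrightarrow A(X)$ is a smooth embedding, and the chain rule together with the Remark relating $\alpha'(t)$ to $d\alpha(t)(1)$ makes the two interpretations coincide. The remaining assertions are then formal consequences of the submersion property of $\pi$ and the diffeomorphism $\tilde{\phi}_p$.
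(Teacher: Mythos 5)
Your proof follows the paper's own argument exactly: factor $\phi_p = \tilde{\phi}_p \circ \pi$, use the quotient-manifold/Closed Subgroup theorem to make $\pi$ a smooth surjective submersion, conclude via the chain rule and the diffeomorphism $\tilde{\phi}_p$ from Theorem \ref{homeo2} that $\phi_p$ is a submersion, and identify $\ker d\phi_p(\iota)$ with the tangent space to the fibre $\stab_p$ of $\pi$. You in fact go slightly further than the paper, whose proof asserts the formula $d\phi_p(\iota)g = g.p$ without argument; your observation that $\phi_p$ is the restriction of the linear evaluation map $\Phi : A(X) \to X^S$, $h \mapsto (h(p_v))_{v \in S}$, so that its derivative at $\iota$ is $\Phi$ itself on $T_\iota \Iso(X) \subset A(X)$, is a correct way to supply that missing detail.
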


\begin{proof}
By Theorem \ref{homeo2}, $\tilde{\phi}_p$ is a smooth diffeomorphism. We note that $\phi_p = \tilde{\phi}_p \circ \pi$ where $\pi : \Iso(X) \rightarrow \Iso(X)/\stab_p$ is the natural quotient map. By the Closed Subgroup theorem \cite[Theorem 5.1.14]{manifold} $\pi$ is a smooth submersion, thus $\phi_p$ is a smooth submersion also and $d \phi_p(\iota)$ is surjective. As $\tilde{\phi}_p$ is a smooth diffeomorphism then $\ker d \phi_p(\iota) = \ker \pi$ as required.
\end{proof}

\begin{corollary}\label{diffeo2}
Let $(p,S)$ and $(q,T)$ be placements in $X$ where the affine span of $\{p_v :v \in S\}$ is equal to the affine span of $\{q_v :v \in T\}$, then the following hold:
\begin{enumerate}[(i)]
\item \label{diffeo2item1} The orbits $\mathcal{O}_p$ and $\mathcal{O}_q$ are diffeomorphic.

\item \label{diffeo2item2} $\dim \mathcal{T}(p)=\dim \mathcal{T}(q)$.

\item \label{diffeo2item3} If $(q,T) \subseteq (p,S)$ then the restriction map 
\begin{align*}
\rho:\mathcal{O}_p \rightarrow \mathcal{O}_q, ~ (x_v)_{v \in S} \mapsto (x_v)_{v \in T}
\end{align*}
is a smooth diffeomorphism.
\end{enumerate}
\end{corollary}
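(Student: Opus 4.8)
The plan is to reduce all three parts to Theorem \ref{homeo2} by first isolating the one genuinely new ingredient: that the stabiliser of a placement depends only on the affine span of its image. So the first thing I would establish is that $\stab_p = \stab_q$. Since every isometry of $X$ is affine (by Mazur--Ulam), an isometry $g$ satisfies $g.p = p$ precisely when $g$ fixes each point $p_v$; and an affine map fixing a collection of points fixes every affine combination of them, hence fixes the whole affine span pointwise. Conversely any $g$ fixing this affine span pointwise fixes $p$. Writing $W := \affspan\{p_v : v \in S\}$, I would conclude $\stab_p = \{ g \in \Iso(X) : g|_W = \iota\}$, and the same description holds for $q$ with the identical subspace $W$. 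Therefore $\stab_p = \stab_q$.

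With this identity in hand, part (\ref{diffeo2item1}) is immediate. By Theorem \ref{homeo2} the maps $\tilde{\phi}_p : \Iso(X)/\stab_p \to \mathcal{O}_p$ and $\tilde{\phi}_q : \Iso(X)/\stab_q \to \mathcal{O}_q$ are smooth diffeomorphisms, and since $\stab_p = \stab_q$ the two quotient manifolds coincide, so $\tilde{\phi}_q \circ \tilde{\phi}_p^{-1} : \mathcal{O}_p \to \mathcal{O}_q$ is a smooth diffeomorphism sending $g.p \mapsto g.q$. Part (\ref{diffeo2item2}) then follows at once: diffeomorphic manifolds have equal dimension, and Theorem \ref{homeo2} identifies $\mathcal{T}(p)$ and $\mathcal{T}(q)$ with the tangent spaces $T_p\mathcal{O}_p$ and $T_q\mathcal{O}_q$, whence $\dim\mathcal{T}(p) = \dim\mathcal{O}_p = \dim\mathcal{O}_q = \dim\mathcal{T}(q)$.

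For part (\ref{diffeo2item3}) I would assume $(q,T) \subseteq (p,S)$, so that $q = p|_T$, and observe that the diffeomorphism produced in part (\ref{diffeo2item1}) is exactly the restriction map $\rho$. Indeed, for each $g \in \Iso(X)$ one has $(\tilde{\phi}_q \circ \tilde{\phi}_p^{-1})(g.p) = g.q = (g.p)|_T = \rho(g.p)$, where the first equality uses $\stab_p = \stab_q$ to make the composite well defined and the middle equality uses $q = p|_T$. Since $\mathcal{O}_p = \{g.p : g \in \Iso(X)\}$, this shows $\rho = \tilde{\phi}_q\circ\tilde{\phi}_p^{-1}$ on all of $\mathcal{O}_p$, so $\rho$ is a smooth diffeomorphism. (Alternatively one could rerun the argument of Lemma \ref{diffeo} verbatim, replacing the appeal to Lemma \ref{homeo2.0} by Theorem \ref{homeo2} in order to drop the finiteness hypothesis on $T$.)

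The step I expect to be the main obstacle is the first one: carefully justifying $\stab_p = \stab_q$, i.e.\ that the stabiliser sees only the affine span. Everything afterwards is formal bookkeeping layered on Theorem \ref{homeo2}, but this reduction is precisely where the hypothesis that the two affine spans coincide is used, and where the affineness of isometries is essential.
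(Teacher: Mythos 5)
Your proof is correct, but it takes a genuinely different route from the paper's. The paper proves part (iii) first: it chooses a finite subplacement $(r,U) \subseteq (q,T) \subseteq (p,S)$ whose points have the same affine span, notes that the two restriction maps $\rho_S : \mathcal{O}_p \rightarrow \mathcal{O}_r$ and $\rho_T : \mathcal{O}_q \rightarrow \mathcal{O}_r$ are smooth diffeomorphisms by Lemma \ref{diffeo}, and writes $\rho = \rho_T^{-1} \circ \rho_S$; it then deduces (i) from (iii), handling the case where $(q,T)$ is not a subplacement of $(p,S)$ by forming the amalgamated placement $(t, S \sqcup T)$ with $t|_S = p$ and $t|_T = q$ and applying (iii) twice; finally (ii) follows from Theorem \ref{homeo2} essentially as you argue. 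You instead isolate the stabiliser identity $\stab_p = \stab_q$ (which the paper never states explicitly, though your justification via Mazur--Ulam and the fact that affine maps preserve affine combinations is sound) and read all three parts off the homogeneous-space description $\mathcal{O}_p \cong \Iso(X)/\stab_p$ of Theorem \ref{homeo2}. What your route buys: it exposes the real reason the orbits agree (equal stabilisers give the same quotient manifold), it produces the diffeomorphism explicitly as $g.p \mapsto g.q$, which makes (iii) an identification rather than a separate argument, and it avoids both the auxiliary finite subplacement and the disjoint-union trick. What the paper's route buys: it stays entirely inside the restriction-map machinery of Lemma \ref{diffeo}, never needing to argue about stabilisers or invoke Mazur--Ulam a second time. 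One point you should make explicit: $\Iso(X)/\stab_p$ and $\Iso(X)/\stab_q$ carry the same smooth structure, not merely the same underlying set of cosets, because the quotient manifold structure furnished by the Closed Subgroup theorem is unique; this is what legitimises forming the composite $\tilde{\phi}_q \circ \tilde{\phi}_p^{-1}$.
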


\begin{proof}
(\ref{diffeo2item3}): Choose a finite subplacement $(r, U) \subseteq (q,T) \subseteq (p,S)$ so that the affine span of $\{r_v :v \in U\}$ is equal to the affine span of $\{q_v :v \in T\}$. Define the restriction maps $\rho_T : \mathcal{O}_q \rightarrow \mathcal{O}_r$ and $\rho_S : \mathcal{O}_p \rightarrow \mathcal{O}_r$, then by Lemma \ref{diffeo}, $\rho_T, \rho_S$ are smooth diffeomorphisms. As $\rho = \rho_T^{-1} \circ \rho_S$ then it is also a smooth diffeomorphism.

(\ref{diffeo2item1}): If $(q,T) \subseteq (p,S)$ this follows from \ref{diffeo2item3}. Suppose $(q,T)$ is not a subplacement of $(p,S)$. Define $(t,S \sqcup T) $ to be the placement where $t|_{S}=p$ and $t|_{T} = q$. We note all three placements have the same affine span of their placement points. As $(p,S) \subset (t, S \sqcup T)$ and $(q,T) \subseteq (t, S \sqcup T)$ then by part \ref{diffeo2item3} we have $\mathcal{O}_p \cong \mathcal{O}_{t} \cong \mathcal{O}_q$ as required.

(\ref{diffeo2item2}): By Theorem \ref{homeo2}, $\mathcal{O}_p$ has tangent space $\mathcal{T}(p)$ at $p$ and $\mathcal{O}_q$ has tangent space $\mathcal{T}(q)$ at $q$. By part \ref{diffeo2item1}, $\mathcal{O}_p \cong \mathcal{O}_q$ and so $\dim \mathcal{T}(p)=\dim \mathcal{T}(q)$.
\end{proof}

%We define a placement $p$ to be \textit{isometrically full} if, given $g \in \Iso(X)$, $g.p=p$ if and only if $g = \iota$. Likewise in \cite{matrixnorm} a placement $p$ is said to be \textit{full} if the map 
%\begin{align*}
%T_{\iota} \Iso (X) \rightarrow X^S, ~ h \mapsto (h(p_v))_{v \in S}
%\end{align*}
%is injective. We say $(G,p)$ is \textit{(isometrically) full} if $p$ is (isometrically) full.

\subsection{Upper and lower bounds of the dimension of $\mathcal{T}(p)$}

\begin{proposition}\label{homeo2.2}
The following hold for any placement $p$:
\begin{enumerate}[(i)]
\item \label{homeo2.2item1} $d \phi_p(\iota)$ is injective if and only if $\phi_p$ is a smooth local diffeomorphism i.e.~$d \phi_p(g)$ is bijective for all $g \in \Iso(X)$.

\item \label{homeo2.2item2} $\phi_p$ is injective if and only if $\phi_p$ is a smooth diffeomorphism. 
\end{enumerate}
If either \ref{homeo2.2item1} or \ref{homeo2.2item2} hold then $\dim \mathcal{T}(p) = \Iso(X)$.
\end{proposition}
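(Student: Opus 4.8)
The plan is to exploit the factorisation $\phi_p = \tilde{\phi}_p \circ \pi$ introduced just before Corollary \ref{homeo2.1}, where $\tilde{\phi}_p$ is a smooth diffeomorphism by Theorem \ref{homeo2} and $\pi : \Iso(X) \rightarrow \Iso(X)/\stab_p$ is the smooth surjective submersion supplied by the Closed Subgroup theorem, together with the two facts recorded in Corollary \ref{homeo2.1}: that $d\phi_p(\iota)$ is surjective onto $\mathcal{T}(p)$ and that $\ker d\phi_p(\iota) = T_\iota \stab_p$. The whole proof then becomes a matter of transferring linear-algebraic information at the identity to every group element and relating injectivity of $\phi_p$ to triviality of $\stab_p$.

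For part \ref{homeo2.2item1}, the reverse implication is immediate, since bijectivity of $d\phi_p(g)$ for all $g$ gives in particular injectivity of $d\phi_p(\iota)$. For the forward implication I would argue that injectivity of the derivative propagates from $\iota$ to every $g$ by equivariance. Writing $L_h$ for left translation by $h$ on $\Iso(X)$ and $A_h$ for the map $x \mapsto h.x$ on the orbit (a diffeomorphism of $\mathcal{O}_p$ onto itself with inverse $A_{h^{-1}}$, the orbit being invariant under the action), the identity $\phi_p(hg) = h.(g.p)$ reads $\phi_p \circ L_h = A_h \circ \phi_p$. Differentiating at $\iota$ and using $L_h(\iota)=h$, $\phi_p(\iota)=p$ gives $d\phi_p(h) = dA_h(p) \circ d\phi_p(\iota) \circ dL_h(\iota)^{-1}$. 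As $dL_h(\iota)$ and $dA_h(p)$ are linear isomorphisms, $d\phi_p(h)$ is injective precisely when $d\phi_p(\iota)$ is. Thus if $d\phi_p(\iota)$ is injective then, being surjective by Corollary \ref{homeo2.1}, it is bijective, whence $d\phi_p(h)$ is bijective for every $h$, which is exactly the stated definition of $\phi_p$ being a smooth local diffeomorphism.

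For part \ref{homeo2.2item2} I would reduce injectivity of the map to triviality of the stabiliser and then feed this into part \ref{homeo2.2item1}. A smooth diffeomorphism is injective, giving one direction. Conversely, if $\phi_p$ is injective then $g.p = p = \iota.p$ forces $g = \iota$, so $\stab_p = \{\iota\}$ and hence $\ker d\phi_p(\iota) = T_\iota \stab_p = \{0\}$; therefore $d\phi_p(\iota)$ is injective and part \ref{homeo2.2item1} makes $\phi_p$ a local diffeomorphism. An injective local diffeomorphism that is surjective onto $\mathcal{O}_p$ (as $\phi_p$ is by construction) is a bijection with everywhere-bijective derivative, hence a smooth diffeomorphism; equivalently, triviality of $\stab_p$ turns $\pi$ into a bijective submersion and so a diffeomorphism, and $\phi_p = \tilde{\phi}_p \circ \pi$ inherits this. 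Finally, the dimension claim follows since under either hypothesis $d\phi_p(\iota)$ is both injective and surjective, hence a linear isomorphism onto $\mathcal{T}(p)$, giving $\dim \mathcal{T}(p) = \dim T_\iota \Iso(X) = \dim \Iso(X)$.

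I expect the main obstacle to be the equivariance step in part \ref{homeo2.2item1}: one must carefully justify that $A_h$ genuinely restricts to a diffeomorphism of the submanifold $\mathcal{O}_p$, so that $dA_h(p)$ is a bona fide isomorphism of the tangent space $\mathcal{T}(p)$, and that the derivatives $d\phi_p(g)$ are computed with respect to the correct tangent spaces of $\mathcal{O}_p$. Both points are underwritten by Theorem \ref{homeo2} and Lemma \ref{homeo2.0}, but the bookkeeping of tangent spaces along the orbit is where care is needed.
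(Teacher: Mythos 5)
Your proposal is correct and follows essentially the same route as the paper: its proof of part (i) transports a kernel vector of $d\phi_p(g)$ back to a kernel vector of $d\phi_p(\iota)$ via the curve $t \mapsto g^{-1}\alpha(t)$, which is exactly your equivariance identity $\phi_p \circ L_g = A_g \circ \phi_p$ expressed with curves instead of the chain rule, and then invokes Corollary \ref{homeo2.1} for surjectivity, while its proof of part (ii) reduces, as yours does, to triviality of $\stab_p$ together with the factorisation $\phi_p = \tilde{\phi}_p \circ \pi$ and Theorem \ref{homeo2}. The differences are purely presentational.
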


\begin{proof}
If $\phi_p$ is a local diffeomorphism it follows $d \phi_p(\iota)$ is bijective. Suppose $d \phi_p(\iota)$ is injective. Choose $g \in \Iso(X)$, then $\dim \ker d\phi_p(g) = 0$; this follows as there exists non-zero $u \in \ker d\phi_p(g)$ with corresponding smooth curve $\alpha:(-1,1) \rightarrow \Iso (X)$ (i.e.~$\alpha(0)=g$, $\alpha'(0) =u$) then the curve $t \mapsto g^{-1}\alpha(t)$ generates a non-zero tangent vector at $\iota$ that lies in the kernel of $d \phi_p(\iota)$. By this it follows that $d \phi_p(\iota)$ is injective if and only if $d\phi_p(g)$ is injective for all $g \in \Iso (X)$. By Corollary \ref{homeo2.1} it follows that $d \phi_p(g)$ is bijective for all $g \in \Iso (X)$ and so is a local diffeomorphism.

If $\phi_p$ is injective then $\stab_p$ is trivial, thus $\Iso(X)/\stab_p =\Iso(X)$ and $\tilde{\phi}_p = \phi_p$. By Theorem \ref{homeo2} it then follows $\phi_p$ is a smooth diffeomorphism. Conversely suppose $\phi_p$ is a smooth diffeomorphism, then as $\phi_p = \tilde{\phi}_p \circ \pi$ it follows that the quotient map $\pi$ is a diffeomorphism. This implies $\pi$ is a group isomorphism, thus $\stab_p$ is trivial and $\phi_p$ is injective.

If either \ref{homeo2.2item1} or \ref{homeo2.2item2} hold then $d \phi_p(\iota)$ is bijective and $\dim \mathcal{T}(p) = \Iso(X)$.
\end{proof}

\begin{definition}
We define a placement $p$ to be \textit{full} if $\phi_p$ is a local diffeomorphism and \textit{isometrically full} if $\phi_p$ is a diffeomorphism. 
\end{definition}

It is immediate that any isometrically full placement is full. By part \ref{homeo2.2item1} of Proposition \ref{homeo2.2} our notion of full agrees with that given in \cite{matrixnorm}. The set of full placements of a set $S$ will be denoted by $\Full (S)$ and likewise the set of full placements of a graph $G$ will be denoted by $\Full (G)$.

\begin{corollary}\label{spanfull}
All spanning placements are isometrically full.
\end{corollary}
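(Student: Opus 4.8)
The plan is to reduce the statement to the triviality of the stabiliser and then invoke Proposition \ref{homeo2.2}(\ref{homeo2.2item2}). By definition $p$ is isometrically full exactly when $\phi_p$ is a smooth diffeomorphism, and that proposition tells us this holds if and only if $\phi_p$ is injective. Since the orbit map satisfies $\phi_p(g) = \phi_p(h)$ iff $h^{-1}g \in \stab_p$, injectivity of $\phi_p$ is precisely the condition $\stab_p = \{\iota\}$. So the whole task is to show that for a spanning placement the only isometry fixing every $p_v$ is the identity.

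To establish $\stab_p = \{\iota\}$, I would first appeal to Mazur--Ulam's theorem, recalled in the Preliminaries, to write any $g \in \stab_p$ as an affine self-map of $X$. The key elementary fact is that an affine map of $X$ is uniquely determined by its values on any set whose affine span is all of $X$: two affine maps that agree on such a set agree on its affine hull, namely $X$. Applying this to $g$ and $\iota$, both of which fix each $p_v$, and using that $\{p_v : v \in S\}$ affinely spans $X$ because $p$ is spanning, I would conclude $g = \iota$, so $\stab_p$ is trivial.

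Combining the triviality of $\stab_p$ with Proposition \ref{homeo2.2}(\ref{homeo2.2item2}) then gives that $\phi_p$ is a smooth diffeomorphism, which is exactly the definition of $p$ being isometrically full, completing the argument.

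There is no substantive obstacle here: the entire content is the interplay between Mazur--Ulam (isometries are affine) and the rigidity of affine maps on affinely spanning sets. The only points requiring mild care are to work with the \emph{affine} span throughout, matching the definition of a spanning placement rather than the linear span, and to record explicitly the translation between injectivity of $\phi_p$ and triviality of $\stab_p$ so that Proposition \ref{homeo2.2} applies cleanly.
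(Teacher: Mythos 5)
Your proposal is correct and follows essentially the same route as the paper's proof: Mazur--Ulam to make elements of $\stab_p$ affine, uniqueness of affine maps on an affinely spanning set to force $g = \iota$, and then Proposition \ref{homeo2.2}(\ref{homeo2.2item2}) to conclude $\phi_p$ is a smooth diffeomorphism. The only cosmetic difference is that you spell out the equivalence between injectivity of $\phi_p$ and triviality of $\stab_p$, which the paper leaves implicit.
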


\begin{proof}
Suppose $g.p=p$ and choose $v_0, \ldots, v_d \in S$ so that $p_{v_0}, \ldots, p_{v_d}$ is an affine basis of $X$, then $g(p_{v_i})=p_{v_i}$ for all $i=0, \ldots,d$. By Mazum-Ulam's theorem \cite{minkowski} $g$ is affine and so since $p_{v_0}, \ldots, p_{v_d}$ is an affine basis of $X$ this map must be unique. As $\iota.p=p$ then $g = \iota$ and $\phi_p$ is injective. The result now follows by part \ref{homeo2.2item2} of Proposition \ref{homeo2.2}.
\end{proof}

\begin{example}
We shall denote by $X$ the space $\mathbb{R}^2$ with the $\ell_3$-norm. The linear isometries of $X$ are generated by the $\pi/2$ anticlockwise rotation around the origin and the reflection in the line $\{(t,0):t \in \mathbb{R} \}$. Let $S = \{v_1,v_2\}$ and $(p,S)$ and $(q,S)$ be the non-spanning placements in $X$ where $p_{v_1}=q_{v_1}=0$, $p_{v_2}=(1,0)$ and $q_{v_2} = (1,2)$. Both placements are full in $X$, however $q$ is isometrically full while $p$ is not. This example shows that while all spanning placements are isometrically full and all isometrically full placements are full the reverse is not necessarily true.
\end{example}

\begin{proposition}\label{openfull}
Let $d +1 \leq |S|< \infty$ and $\dim X =d$. Then $\Full(S)$ is an open dense subset of $X^S$ and $\Full(S)^c$ has measure zero with respect to the Lebesgue measure on $X^S$.
\end{proposition}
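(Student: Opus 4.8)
The plan is to reduce everything to two facts already established: that fullness is equivalent to the injectivity of the linear map $d\phi_p(\iota)$ (part \ref{homeo2.2item1} of Proposition \ref{homeo2.2}), and that spanning placements are full (Corollary \ref{spanfull}). Since Corollary \ref{homeo2.1} identifies $d\phi_p(\iota)$ with the map $g \mapsto g.p$ on the \emph{fixed} vector space $T_\iota \Iso(X)$, the whole statement becomes a claim about the rank of a linear map that varies with $p$.

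For density and the measure-zero complement I would argue purely by inclusion. Because $|S| \geq d+1$, every placement in general position is spanning (as recorded in the preliminaries), so $\mathcal{G}(S) \subseteq \Full(S)$ by Corollary \ref{spanfull}. Since $\mathcal{G}(S)$ is dense and $X^S \setminus \mathcal{G}(S)$ has measure zero, the same two properties pass to the larger set $\Full(S)$: from $\Full(S)^c \subseteq \mathcal{G}(S)^c$ we get measure zero, and from $\Full(S) \supseteq \mathcal{G}(S)$ we get density. This part is immediate and needs no further work.

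The substantive point is openness, which does \emph{not} follow from the measure-theoretic statement (a set with measure-zero complement need not be open). Here I would fix once and for all a basis $g_1, \dots, g_m$ of $T_\iota \Iso(X)$, where $m = \dim \Iso(X)$, writing each $g_i$ as an affine field $x \mapsto A_i x + b_i$. Then in this basis together with a fixed basis of $X^S$, the map $d\phi_p(\iota)$ is represented by the matrix whose $i$-th column records $(A_i p_v + b_i)_{v \in S}$; its entries are affine-linear in the coordinates of $p$, so $p \mapsto d\phi_p(\iota)$ is continuous as a map into $L(T_\iota\Iso(X), X^S)$. By part \ref{homeo2.2item1} of Proposition \ref{homeo2.2}, $p$ is full exactly when this matrix has rank $m$. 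Since the locus of matrices of rank at least $m$ is open and $m$ is the largest value the rank can attain (it is the dimension of the domain), $\Full(S)$ is the preimage of an open set under a continuous map, hence open.

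I expect the only delicate bookkeeping to be the verification that the matrix entries depend continuously on $p$ through one fixed choice of bases, and the accompanying remark that the maximal rank $m$ is genuinely attained. The latter is guaranteed by Corollary \ref{spanfull}: a spanning placement exists since $|S| \geq d+1$, forcing $m \leq |S|\dim X$ and ensuring that "rank $m$" is the correct (nonvacuous) open condition rather than an empty one.
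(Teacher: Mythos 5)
Your proposal is correct and follows essentially the same route as the paper: the density and measure-zero claims come from the inclusion $\mathcal{G}(S) \subseteq \Full(S)$ via Corollary \ref{spanfull}, and openness comes from observing that $p \mapsto d\phi_p(\iota)$ is affine (hence continuous) into $L(T_\iota \Iso(X), X^S)$ and pulling back the open set of injective (full-rank) maps, using the characterisation of fullness in Proposition \ref{homeo2.2}. Your explicit matrix bookkeeping is just an unwound version of the paper's statement that this map is affine, and you even cite the correct part (\ref{homeo2.2item1}) of Proposition \ref{homeo2.2}, where the paper's proof contains a slight slip in referencing part (\ref{homeo2.2item2}).
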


\begin{proof}
Since $|S| \geq d+1$ then all placements in general position are spanning. By Corollary \ref{spanfull} we have $\mathcal{G}(S) \subset \Full(S)$, thus as $\mathcal{G}(S)$ is dense in $X^S$ then $\Full(S)$ is dense in $X^S$. Since $\mathcal{G}(S)^c$ is an algebraic set then it has measure zero, thus it follows $\Full(S)^c$ also has measure zero.

Define the affine map 
\begin{align*}
F: X^S \rightarrow L(T_{\iota} \Iso (X),X^S), ~ p \mapsto d\phi_p(\iota).
\end{align*}
The set of injective maps of $L(T_{\iota} \Iso (X),X^S)$ is open. We note $\Full(S)$ is the preimage of the set of injective maps of $L(T_{\iota} \Iso (X),X^S)$ under $F$ by part \ref{homeo2.2item2} of Proposition \ref{homeo2.2} and so $\Full(S)$ is open. 
\end{proof}

\begin{corollary}
All isometrically full placements in $X$ are spanning if and only if $X$ is Euclidean.
\end{corollary}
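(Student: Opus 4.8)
The plan is to translate ``isometrically full'' into a concrete condition on the pointwise stabiliser of an affine subspace, and then to characterise Euclidean spaces by the availability of reflections. By part (ii) of Proposition \ref{homeo2.2}, a placement $p$ of $S$ is isometrically full exactly when $\phi_p$ is injective, i.e.~when $\stab_p = \{\iota\}$. Since every isometry of $X$ is affine (Mazur--Ulam), an isometry fixes every $p_v$ if and only if it fixes the affine span $A := \affspan\{p_v : v \in S\}$ pointwise; hence $\stab_p$ is precisely the pointwise stabiliser of $A$ in $\Iso(X)$, and $p$ is non-spanning if and only if $A \subsetneq X$. So the statement is equivalent to: $X$ is Euclidean if and only if every proper affine subspace of $X$ has non-trivial pointwise stabiliser. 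Throughout I would fix the canonical inner product of Lemma \ref{paperisolem}(\ref{paperisolemitem1}), giving a Euclidean norm $\|\cdot\|_2$ with $\Isolin(X) \leq \Isolin(X, \|\cdot\|_2) = O(d)$, the orthogonal group of $\langle\cdot,\cdot\rangle$.

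For the direction ``$X$ Euclidean $\Rightarrow$ isometrically full placements are spanning'' I would argue by contraposition: given a non-spanning $p$, its affine span $A$ lies in some affine hyperplane $H$, and the orthogonal reflection $r_H$ across $H$ is a (Euclidean) isometry fixing $H \supseteq A$ pointwise. Thus $r_H \in \stab_p \setminus \{\iota\}$, so $p$ is not isometrically full. This is the routine direction.

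For the converse I would prove the contrapositive: if $X$ is non-Euclidean then some hyperplane has trivial pointwise stabiliser, which (taking a placement whose points form an affine basis of that hyperplane) yields a non-spanning isometrically full placement by Proposition \ref{homeo2.2}(\ref{homeo2.2item2}). The key observation is that the pointwise stabiliser in $\Iso(X)$ of a \emph{linear} hyperplane $H$ is very small: any isometry fixing $0 \in H$ is linear, hence lies in $\Isolin(X) \subseteq O(d)$, and the only elements of $O(d)$ fixing the $(d-1)$-dimensional subspace $H$ pointwise are the identity and the orthogonal reflection $r_H$ (negating the one-dimensional $\langle\cdot,\cdot\rangle$-orthogonal complement $H^\perp$). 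Hence the pointwise stabiliser of $H$ is non-trivial if and only if $r_H \in \Isolin(X)$, i.e.~if and only if $r_H$ is an isometry of $\|\cdot\|$. Suppose, for contradiction, that every hyperplane had non-trivial pointwise stabiliser; then $r_H \in \Isolin(X)$ for every hyperplane $H$. Since the orthogonal reflections across all hyperplanes generate $O(d)$ (Cartan--Dieudonn\'e), this forces $O(d) \subseteq \Isolin(X) \subseteq O(d)$, so $\|\cdot\|$ is $O(d)$-invariant and therefore a scalar multiple of $\|\cdot\|_2$, i.e.~$X$ is Euclidean --- the desired contradiction.

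The main obstacle, and the place where the hypotheses really bite, is controlling the pointwise stabiliser of a hyperplane exactly; the danger is that a non-Euclidean isometry group could in principle fix $H$ pointwise through some shear-type map rather than a genuine reflection. This is precisely what the canonical embedding $\Isolin(X) \subseteq O(d)$ of Lemma \ref{paperisolem}(\ref{paperisolemitem1}) rules out, reducing the analysis to orthogonal maps and leaving only $\{\iota, r_H\}$ as candidates. Once this reduction is in place the remaining content is the classical fact that hyperplane reflections generate the full orthogonal group, together with the elementary observation that an $O(d)$-invariant norm is Euclidean. I would take a little care with the affine-versus-linear bookkeeping (choosing $H$ through the origin, so that its pointwise stabiliser consists of linear maps) and with the fact that producing a single good hyperplane already suffices.
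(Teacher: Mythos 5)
Your proof is correct, and in the harder direction it takes a genuinely different route from the paper. Both arguments share the same skeleton: via Proposition \ref{homeo2.2}(\ref{homeo2.2item2}), isometric fullness is triviality of $\stab_p$, the easy direction (Euclidean $\Rightarrow$ spanning) is handled in both by reflecting across a hyperplane containing the affine span, and the converse is in both cases reduced to the statement that if every linear hyperplane admits a non-identity isometry fixing it pointwise then $X$ is Euclidean. The difference is how that last statement is established: the paper cites it as a black box, namely Amir's characterisation of inner product spaces \cite[(4.7)]{euclidean}, whereas you prove it from scratch --- using Lemma \ref{paperisolem}(\ref{paperisolemitem1}) to embed $\Isolin(X)$ into the orthogonal group $O(d)$ of the canonical Euclidean structure, observing that the only elements of $O(d)$ fixing a hyperplane $H$ pointwise are $\iota$ and the orthogonal reflection $r_H$, and then invoking Cartan--Dieudonn\'e to conclude $\Isolin(X) = O(d)$, which forces $\|\cdot\|$ to be a multiple of $\|\cdot\|_2$. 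What the paper's approach buys is brevity; what yours buys is a self-contained argument (modulo Mazur--Ulam and Cartan--Dieudonn\'e) that makes visible exactly where non-Euclidean spaces fail, and it also repairs a looseness in the paper's phrasing: the paper asks only for a ``linear map $T_Y \neq \iota$ invariant on $Y$'', which as stated is vacuous (shears fix a hyperplane pointwise in any normed space); the maps must be linear \emph{isometries}, and your stabiliser bookkeeping --- isometries fixing $0$ are linear, hence land in $\Isolin(X) \subseteq O(d)$ --- makes this precise.
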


\begin{proof}
Suppose all isometrically full placements in $X$ are spanning, then it follows by part \ref{homeo2.2item2} of Proposition \ref{homeo2.2} that for all linear hyperplanes $Y$ of $X$ there exists a linear map $T_Y \neq \iota$ that is invariant on $Y$. By \cite[(4.7)]{euclidean} $X$ is Euclidean.

Conversely suppose $X$ is Euclidean. If $p$ is a non-spanning placement then $p$ lies in some affine hyperplane $H$. We note that if $h$ is the reflection in $H$ then $h.p =p$, thus by part \ref{homeo2.2item2} of Proposition \ref{homeo2.2} $p$ is not isometrically full.
\end{proof}

We may now give an upper and lower bound for the dimension of $\mathcal{T}(p)$.

\begin{theorem}\label{flextheorem}
For any placement $p$ in a $d$-dimensional space $X$,
\begin{align*}
d \leq \dim \mathcal{T}(p) \leq \dim \Iso(X)
\end{align*}
with $\dim \mathcal{T}(p) = \dim \Iso(X)$ if and only if $p$ is full.
\end{theorem}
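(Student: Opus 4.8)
The plan is to split the statement into three parts—the upper bound, the equality characterisation, and the lower bound—treating the first two together, since both flow directly from the surjectivity of $d\phi_p(\iota)$ established earlier.

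First I would dispatch the upper bound and equality clause. By Corollary \ref{homeo2.1} the differential $d\phi_p(\iota) : T_\iota \Iso(X) \rightarrow \mathcal{T}(p)$ is surjective, and since $\mathcal{T}(p)$ is the tangent space of the finite dimensional manifold $\mathcal{O}_p$ (Theorem \ref{homeo2}) it is finite dimensional; a surjection then forces $\dim \mathcal{T}(p) \leq \dim T_\iota \Iso(X) = \dim \Iso(X)$. For equality I would invoke part \ref{homeo2.2item1} of Proposition \ref{homeo2.2}: the placement $p$ is full exactly when $\phi_p$ is a local diffeomorphism, which in turn holds exactly when $d\phi_p(\iota)$ is injective. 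As this map is already surjective, injectivity is equivalent to bijectivity, and bijectivity of a linear map between finite dimensional spaces is equivalent to equality of their dimensions. Chaining these equivalences yields $\dim \mathcal{T}(p) = \dim \Iso(X)$ if and only if $p$ is full.

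The remaining work is the lower bound $\dim \mathcal{T}(p) \geq d$, where I would exhibit an explicit $d$-dimensional subspace of trivial motions coming from translations. For each $t \in X$ the curve $s \mapsto T_{st}.p$, with $T_{st}$ the translation $y \mapsto y + st$, lies in $\mathcal{O}_p$ because every $T_{st} \in \Iso(X)$; it is differentiable at $s = 0$ with derivative the constant tuple $(t)_{v \in S}$, so each such constant tuple is a trivial infinitesimal motion. The assignment $t \mapsto (t)_{v \in S}$ is a linear map $X \rightarrow \mathcal{T}(p)$, and it is injective whenever $S \neq \emptyset$, so its image is a subspace of $\mathcal{T}(p)$ of dimension $d = \dim X$, giving $\dim \mathcal{T}(p) \geq d$.

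I do not expect a serious obstacle here: the substantive analytic content—that $\mathcal{O}_p$ is a smooth manifold with tangent space $\mathcal{T}(p)$ at $p$, and that $d\phi_p(\iota)$ is a surjection—has already been secured in Theorem \ref{homeo2} and Corollary \ref{homeo2.1}, so the upper bound and the equality clause reduce to elementary linear algebra. The one point deserving care is the lower bound, where one must confirm both that the translational curves genuinely land in $\mathcal{O}_p$ and that the induced constant motions span a full $d$-dimensional subspace rather than degenerating; both are transparent once one notes that the translation subgroup of $\Iso(X)$ is itself $d$-dimensional and meets $\stab_p$ only in the identity, since $T_x.p = p$ forces $x = 0$.
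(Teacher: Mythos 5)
Your proposal is correct and follows essentially the same route as the paper: the upper bound via the surjectivity of $d\phi_p(\iota)$ from Corollary \ref{homeo2.1}, the lower bound via translational trivial motions $t \mapsto (t)_{v \in S}$, and the equality clause by combining surjectivity with the injectivity criterion of part \ref{homeo2.2item1} of Proposition \ref{homeo2.2}. The only difference is cosmetic: you phrase the equality case as a single chain of equivalences, where the paper argues the two implications separately.
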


\begin{proof}
By Corollary \ref{homeo2.1}, $d \phi_p(\iota) : T_\iota \Iso (X) \rightarrow \mathcal{T}(p)$ is surjective, thus we have $\dim \mathcal{T}(p) \leq \dim \Iso (X)$. Let $x_1, \ldots, x_d \in X$ be a basis and define for each $i \in \{1, \ldots, d\}$ the trivial finite flex $\alpha(i)$ where for each $v \in S$ we have $\alpha(i)_v : (-1,1) \rightarrow X$, $t \mapsto p_v + tx_i$. We note that $(\alpha(i)'_v(0))_{v \in S} = (x_i)_{v \in S} \in \mathcal{T}(p)$ for each $i \in \{1, \ldots, d\}$, thus $\dim \mathcal{T}(p) \geq d$.

If $p$ is full then by Proposition \ref{homeo2.2}, $\dim \mathcal{T}(p) = \dim \Iso(X)$. If $\dim \mathcal{T}(p) = \dim \Iso(X)$ then by Corollary \ref{homeo2.1}, $d \phi_p (\iota)$ is bijective; it then follows by part \ref{homeo2.2item1} of Proposition \ref{homeo2.2} that $p$ is full.
\end{proof}

A \textit{rigid motion} of $X$ is a family $\gamma :=(\gamma_x)_{x \in X}$ of continuous maps $\gamma_x : (-\delta,\delta) \rightarrow X , ~ x \in X$ (for some fixed $\delta >0$) where $\gamma_x(0) =x$ and $\|\gamma_x(t) - \gamma_y(t) \| = \|x - y\|$ for all $x, y \in X$ and $t \in (-\delta, \delta)$. The following shows that our definition of a trivial finite motion agrees with the definition given in \cite{noneuclidean} if a framework is isometrically full.

\begin{proposition}\label{isotriv}
Let $p$ be an isometrically full placement in $ X$. If $\alpha$ is a continuous path through $p$ in $X^S$ then the following are equivalent:
\begin{enumerate}[(i)]
\item \label{isotriv1} $\alpha$ is a trivial finite motion.

\item \label{isotriv2} There exists a unique continuous path $h:(-\delta, \delta) \rightarrow \Iso (X)$ such that $h_t(p_v)= \alpha_v(t)$ for all $t \in (-\delta,\delta)$, $v \in S$.

\item \label{isotriv3} There exists a unique rigid motion $\gamma$ such that $\gamma_{p_v} = \alpha_v$ for all $v \in S$.
\end{enumerate}
\end{proposition}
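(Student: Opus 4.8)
The plan is to prove the cycle (i) $\Rightarrow$ (ii) $\Rightarrow$ (iii) $\Rightarrow$ (i), reading off the uniqueness assertions along the way from the single structural fact that, since $p$ is isometrically full, the map $\phi_p : \Iso(X) \to \mathcal{O}_p$ is a smooth diffeomorphism; in particular $\phi_p$ is injective and $\stab_p = \{\iota\}$. For (i) $\Rightarrow$ (ii), suppose $\alpha$ is a trivial finite motion, so that $\alpha(t) := (\alpha_v(t))_{v \in S} \in \mathcal{O}_p$ for all $t$. Since each $\alpha_v$ is continuous and $\alpha$ takes values in $\mathcal{O}_p$ (a smooth manifold by Theorem \ref{homeo2}, whose topology agrees with that of $X^S$ after passing if necessary to a finite spanning subplacement via Corollary \ref{diffeo2}), I set $h_t := \phi_p^{-1}(\alpha(t))$; continuity of $\phi_p^{-1}$ gives continuity of $h$, and $h_t.p = \alpha(t)$ is exactly $h_t(p_v) = \alpha_v(t)$. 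Uniqueness of $h$ is immediate from injectivity of $\phi_p$. The converse (ii) $\Rightarrow$ (i) is immediate, as $\alpha(t) = h_t.p \in \mathcal{O}_p$ and $\alpha$ is continuous through $p$.

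For (ii) $\Rightarrow$ (iii), given such an $h$ I would define $\gamma_x(t) := h_t(x)$ for every $x \in X$. Evaluating at $t = 0$, the equalities $h_0(p_v) = \alpha_v(0) = p_v$ force $h_0 \in \stab_p = \{\iota\}$, hence $\gamma_x(0) = x$; and $\|\gamma_x(t) - \gamma_y(t)\| = \|h_t(x) - h_t(y)\| = \|x - y\|$ since each $h_t$ is an isometry. Thus $\gamma$ is a rigid motion, and $\gamma_{p_v}(t) = h_t(p_v) = \alpha_v(t)$ gives $\gamma_{p_v} = \alpha_v$.

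For (iii) $\Rightarrow$ (i), given a rigid motion $\gamma$ with $\gamma_{p_v} = \alpha_v$, I fix $t$ and consider the time-slice $F_t : X \to X$, $F_t(x) := \gamma_x(t)$, which is distance-preserving by the definition of a rigid motion. The key step, and the main obstacle, is to show $F_t \in \Iso(X)$, i.e.\ that a distance-preserving self-map of the finite-dimensional space $X$ is automatically a surjective isometry; this is what lets me conclude $\alpha(t) = F_t.p \in \mathcal{O}_p$. I would argue surjectivity directly: $F_t$ is injective and an isometry onto its image, hence a homeomorphism onto $F_t(X)$, so by invariance of domain $F_t(X)$ is open; the identity $\|F_t(x) - F_t(0)\| = \|x\|$ makes $F_t$ proper, so $F_t(X)$ is also closed; connectedness of $X$ then yields $F_t(X) = X$. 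Finally, this same argument delivers the uniqueness in (iii) \emph{without assuming $p$ spanning}: any rigid motion agreeing with $\alpha$ at the placement points has time-slices $F_t \in \Iso(X)$ with $F_t.p = \alpha(t)$, which forces $F_t = \phi_p^{-1}(\alpha(t))$ by injectivity of $\phi_p$, pinning down $\gamma_x(t) = F_t(x)$ at every $x \in X$.
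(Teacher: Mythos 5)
Your proposal is correct and follows essentially the same route as the paper: the cycle (i) $\Rightarrow$ (ii) $\Rightarrow$ (iii) $\Rightarrow$ (i), driven throughout by the single fact that isometric fullness makes $\phi_p$ a diffeomorphism with trivial stabiliser. The only difference is one of detail rather than of method: where the paper merely ``notes'' that the restriction of a rigid motion to $\{p_v : v \in S\}$ is a trivial finite motion, you supply the invariance-of-domain argument showing that each time-slice $F_t$ of a rigid motion is a surjective isometry, hence an element of $\Iso(X)$ --- a step (also underlying the uniqueness claim in (iii)) that the paper leaves implicit.
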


\begin{proof}
(\ref{isotriv1} $\Rightarrow$ \ref{isotriv2}): As $\alpha$ is a continuous path in $\mathcal{O}_p$ and $\phi_p$ is a smooth diffeomorphism we define the unique continuous path $h := \phi^{-1}_p \circ \alpha$.

(\ref{isotriv2} $\Rightarrow$ \ref{isotriv3}): Define $\gamma$ to be the unique family of maps $\gamma$ where $\gamma_x(t) = h_t(x)$ for all $x \in X$ and $t \in (-\delta, \delta)$, then $\gamma$ is a rigid motion as required. 

(\ref{isotriv3} $\Rightarrow$ \ref{isotriv1}): We note that $\gamma$ restricted to the set $\{p_v : v\in S\}$ is a trivial finite motion.
\end{proof}

\section{Equivalence of continuous, local and infinitesimal rigidity}\label{3}

\subsection{Continuous, local and infinitesimal rigidity}

We say that an edge $vw \in E(G)$ of a framework $(G,p)$ is \textit{well-positioned} if $p_v -p_w \in \smooth$; if this holds we define $\varphi_{v,w} := \varphi \left(\frac{p_v-p_w}{\|p_v-p_w\|} \right)$. If all edges of $(G,p)$ are well-positioned we say that $(G,p)$ is \textit{well-positioned} and $p$ is a \textit{well-positioned placement} of $G$. We shall denote the subset of well-positioned placements of $G$ in $X$ by the set $\mathcal{W}(G)$. 

\begin{lemma}\label{wellpos}
Let $G$ be finite, then $\mathcal{W}(G)$ is dense subset of $X^{V(G)}$ and $\mathcal{W}(G)^c$ has measure zero with respect to the Lebesgue measure on $X^{V(G)}$.
\end{lemma}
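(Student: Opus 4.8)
The plan is to describe the complement $\mathcal{W}(G)^c$ as a finite union of null sets, one contributed by each edge, and then read off both conclusions at once. First I would fix an edge $vw \in E(G)$ and introduce the linear difference map
\[
\pi_{vw} : X^{V(G)} \rightarrow X, \qquad (x_u)_{u \in V(G)} \mapsto x_v - x_w .
\]
By definition, a placement $p$ fails to be well-positioned at $vw$ exactly when $p_v - p_w \notin \smooth$, that is, when $p \in \pi_{vw}^{-1}(\smooth^c)$. Since a well-positioned placement is one for which every edge is well-positioned, this gives
\[
\mathcal{W}(G)^c = \bigcup_{vw \in E(G)} \pi_{vw}^{-1}(\smooth^c),
\]
which is a finite union because $G$ is finite.

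Next I would show that each set $\pi_{vw}^{-1}(\smooth^c)$ is a Lebesgue null set in $X^{V(G)}$. Singling out the $v$-coordinate, I would write $X^{V(G)} \cong X^{V(G) \setminus \{v\}} \times X$ and examine the slice obtained by fixing all coordinates other than $x_v$. For such a fixed choice, the condition $x_v - x_w \in \smooth^c$ becomes the requirement that $x_v$ lie in the translate $x_w + \smooth^c$. By part \ref{paper1item0} of Proposition \ref{paper1}, the set $\smooth^c$ is Lebesgue null in $X$, and since Lebesgue measure is translation invariant, every such slice is null. Fubini's theorem then yields that $\pi_{vw}^{-1}(\smooth^c)$ is null, and a finite union of null sets is null, so $\mathcal{W}(G)^c$ has measure zero with respect to the Lebesgue measure on $X^{V(G)}$, as claimed.

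Finally, density follows immediately and for free: any nonempty open subset of $X^{V(G)}$ has positive Lebesgue measure and so cannot be contained in the null set $\mathcal{W}(G)^c$, whence every nonempty open set meets $\mathcal{W}(G)$. The only step requiring genuine care is the Fubini slicing, where the point is to isolate a single coordinate so that the non-smoothness condition reduces to membership in a fixed translate of the null set $\smooth^c$; once that reduction is in place, the remainder is routine bookkeeping with translation invariance and finite unions.
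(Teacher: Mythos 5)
Your proposal is correct and takes essentially the same approach as the paper: both reduce the claim to the fact that each edge $vw$ contributes a null set $\{p : p_v - p_w \in \smooth^c\}$ to $\mathcal{W}(G)^c$, the paper packaging the finite union as an induction on the number of edges while you take the union directly and deduce density from co-nullity exactly as the paper does. If anything, your Fubini slicing argument makes explicit the single-edge case that the paper merely attributes to part (\ref{paper1item0}) of Proposition \ref{paper1}, so your write-up fills in a detail the paper glosses over.
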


\begin{proof}
By part \ref{paper1item0} of Proposition \ref{paper1} the set $\smooth$ is dense and its compliment has measure zero, thus the result holds for all graphs with a single edge. Suppose the result holds for all graphs with $n-1$ edges and let $G$ be any graph with $n$ edges. Choose $vw \in E(G)$, and define $G_1,G_2$ to be the graphs on $V(G)$ where $E(G_1) = E(G) \setminus \{vw\}$ and $E(G_2) = \{vw\}$ then $\mathcal{W}(G_1)^c$ and $\mathcal{W}(G_2)^c$ have measure zero by assumption. As $\mathcal{W}(G)^c = \mathcal{W}(G_1)^c \cup \mathcal{W}(G_2)^c$ then $\mathcal{W}(G)^c$ has measure zero also; this further implies $\mathcal{W}(G)$ is also dense. The result now follows by induction.
\end{proof}

We define the \textit{rigidity map} of $G$ (in $X$) to be the continuous map
\begin{align*}
f_G: X^{V(G)} \rightarrow \mathbb{R}^{E(G)}, ~ x = (x_v)_{v \in V(G)} \mapsto (\|x_v - x_w \|)_{vw \in E(G)}
\end{align*}
and for well-positioned placements $p$ we also define the \textit{rigidity operator} of $G$ at $p$ in $X$ to be the continuous linear map
\begin{align*}
df_G(p): X^{V(G)} \rightarrow \mathbb{R}^{E(G)}, ~ x = (x_v)_{v \in V(G)} \mapsto (\varphi_{v,w}(x_v - x_w ))_{vw \in E(G)}.
\end{align*}
For any framework we define the \textit{configuration space} of $(G,p)$ in $X$ to be the set $f_G^{-1} [f_G(p)]$.

\begin{proposition}\cite[Proposition 6]{maxwell}\label{paper2}
If $G$ is finite then $f_G$ is differentiable at $p$ if and only if $p$ is a well-positioned placement of $G$; if this holds then the rigidity operator at $p$ is the derivative of the rigidity map at $p$.
\end{proposition}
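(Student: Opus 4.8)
The plan is to reduce the statement to the one-dimensional facts about the norm already recorded in Proposition \ref{paper1}, by exploiting the product structure of the target and the observation that each coordinate of $f_G$ is the norm precomposed with a linear map. Since $G$ is finite, $\mathbb{R}^{E(G)}$ is a finite product, and a map into a finite product is differentiable at a point precisely when each of its coordinate functions is, the derivative being the tuple of coordinate derivatives. Thus I would first observe that $f_G$ is differentiable at $p$ if and only if, for every $vw \in E(G)$, the function $g_{vw} : x \mapsto \|x_v - x_w\|$ is differentiable at $p$, and that in this case $df_G(p)$ is the linear map whose $vw$-component is $dg_{vw}(p)$.

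Next I would factor each coordinate as $g_{vw} = \|\cdot\| \circ L_{vw}$, where $L_{vw} : X^{V(G)} \to X$, $x \mapsto x_v - x_w$ is linear (hence everywhere differentiable with derivative $L_{vw}$ itself). For the ``if'' direction, suppose $p$ is well-positioned, so that $p_v - p_w \in \smooth$ for every edge. By part \ref{paper1item1} of Proposition \ref{paper1} the norm is differentiable at each $p_v - p_w$, and by part \ref{paper1item3} its derivative there is $\tfrac{1}{\|p_v - p_w\|}\varphi(p_v - p_w)$. The chain rule then gives that $g_{vw}$ is differentiable at $p$ with derivative $x \mapsto \tfrac{1}{\|p_v - p_w\|}\varphi(p_v - p_w)(x_v - x_w)$. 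Invoking the homogeneity of $\varphi$ to rewrite $\tfrac{1}{\|p_v - p_w\|}\varphi(p_v - p_w)$ as $\varphi_{v,w}$, this identifies $dg_{vw}(p)$ with the $vw$-component of the rigidity operator, and assembling the coordinates shows $df_G(p)$ equals the rigidity operator as defined.

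For the converse I would run the factorisation backwards. Assuming $f_G$ is differentiable at $p$, each $g_{vw}$ is differentiable at $p$, and I want to recover differentiability of the norm at $p_v - p_w$. To do this I would choose a linear section $s : X \to X^{V(G)}$ of $L_{vw}$ (for example $s(y)$ with $v$-entry $y$ and all other entries $0$, so that $L_{vw} \circ s = \iota$). Then the affine map $y \mapsto p + s\big(y - (p_v - p_w)\big)$ sends $p_v - p_w$ to $p$ and is carried by $L_{vw}$ to the identity in $y$, so precomposing $g_{vw}$ with it yields exactly $y \mapsto \|y\|$; by the chain rule the norm is differentiable at $p_v - p_w$. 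Finally I would note that the norm is never differentiable at $0$ (along any ray $t \mapsto th$ it behaves like $|t|\,\|h\|$), which forces $p_v - p_w \neq 0$, whereupon part \ref{paper1item1} of Proposition \ref{paper1} yields $p_v - p_w \in \smooth$; as $vw$ was arbitrary, $p$ is well-positioned.

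I expect the only genuinely delicate point to be this converse: one must ensure that differentiability of the composite $g_{vw}$ really does transfer back to the norm and is not an artefact of cancellation by $L_{vw}$, which is precisely what the linear section rules out, together with separately excluding the degenerate case $p_v = p_w$ where the norm has a corner. The remaining steps are a routine application of the chain rule and the product-rule bookkeeping for differentiability into a finite-dimensional product space.
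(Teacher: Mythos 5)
Your proof is correct, but note that the paper itself does not prove this statement at all: it is imported by citation from Kitson and Schulze (\cite[Proposition 6]{maxwell}), so there is no internal proof to compare against. What you have written is a complete, self-contained derivation from Proposition \ref{paper1}: the reduction to coordinate functions via the finite product structure of $\mathbb{R}^{E(G)}$, the factorisation $g_{vw} = \|\cdot\| \circ L_{vw}$ with the chain rule and the homogeneity of $\varphi$ giving the forward direction and the identification of $dg_{vw}(p)$ with $\varphi_{v,w}(\,\cdot_v - \cdot_w)$, and -- the genuinely delicate part -- the converse via a linear section $s$ of $L_{vw}$, which correctly transfers differentiability of the composite back to the norm itself and rules out the degenerate case $p_v = p_w$ by the non-differentiability of the norm at the origin. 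Both halves of the converse matter: without the section argument one only knows the composite is differentiable (which could in principle be an artefact of the kernel of $L_{vw}$), and without excluding $p_v - p_w = 0$ one cannot invoke part (\ref{paper1item1}) of Proposition \ref{paper1}, whose hypothesis is $x_0 \neq 0$. Your argument handles both, so it stands as a valid replacement for the external citation; it is also, in spirit, the natural proof one would expect the cited reference to contain.
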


\begin{lemma}\label{rigopcont}
If $G$ is finite then the map
\begin{align*}
df_G : \mathcal{W}(G) \rightarrow L(X^{V(G)}, \mathbb{R}^{E(G)}), ~ x \mapsto df_G(x)
\end{align*}
is continuous.
\end{lemma}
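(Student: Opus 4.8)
The plan is to reduce the claimed operator-norm continuity of $p \mapsto df_G(p)$ to the continuity of the dual map $\varphi$ established in part \ref{paper1item2} of Proposition \ref{paper1}. Since $G$ is finite, the codomain $\mathbb{R}^{E(G)}$ is finite dimensional and $E(G)$ is a finite index set, so convergence in $L(X^{V(G)}, \mathbb{R}^{E(G)})$ can be checked edge by edge. First I would rewrite the edge functionals using the homogeneity of $\varphi$: for a well-positioned placement $p$,
\begin{align*}
\varphi_{v,w} = \varphi\left(\frac{p_v - p_w}{\|p_v-p_w\|}\right) = \frac{1}{\|p_v-p_w\|}\,\varphi(p_v-p_w),
\end{align*}
which exhibits $df_G(p)$ as assembled from the finitely many functionals $\varphi_{v,w} \in X^*$, one per edge.

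Next I would fix a sequence $p^n \to p$ in $\mathcal{W}(G)$ and show that the corresponding functionals $\varphi^{(n)}_{v,w} \to \varphi_{v,w}$ in $X^*$ for each edge $vw$. Continuity of subtraction gives $p^n_v - p^n_w \to p_v - p_w$ in $X$, and since $p \in \mathcal{W}(G)$ the limit $p_v - p_w$ is a nonzero point of $\smooth$; because each $p^n \in \mathcal{W}(G)$, every term $p^n_v - p^n_w$ also lies in $\smooth$, i.e.\ in the domain of $\varphi$. Thus continuity of $\varphi$ yields $\varphi(p^n_v - p^n_w) \to \varphi(p_v-p_w)$, while continuity of the norm gives $\|p^n_v - p^n_w\| \to \|p_v-p_w\| \neq 0$. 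Combining these with the displayed formula shows $\varphi^{(n)}_{v,w} \to \varphi_{v,w}$ in the operator norm on $X^*$.

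Finally I would convert this edgewise convergence into operator-norm convergence by a direct estimate. For $x \in X^{V(G)}$ with $\|x\|_{V(G)} \leq 1$ the $vw$-coordinate of $(df_G(p^n) - df_G(p))x$ equals $(\varphi^{(n)}_{v,w} - \varphi_{v,w})(x_v - x_w)$, which is bounded in absolute value by $2\,\|\varphi^{(n)}_{v,w} - \varphi_{v,w}\|$ since $\|x_v - x_w\| \leq 2$. Taking the maximum over the finitely many edges, the supremum over the unit ball, and using equivalence of norms on $\mathbb{R}^{E(G)}$, I would bound $\|df_G(p^n) - df_G(p)\|_{\text{op}}$ by a fixed constant multiple of $\max_{vw \in E(G)}\|\varphi^{(n)}_{v,w} - \varphi_{v,w}\|$, which tends to $0$. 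I expect the only genuine subtleties to be this last bookkeeping step, namely confirming that edgewise convergence of the functionals really does control the operator norm, together with the check that the arguments $p^n_v - p^n_w$ never leave the domain of $\varphi$; everything else is a direct transfer of the continuity of $\varphi$ and of the norm.
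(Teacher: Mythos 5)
Your proposal is correct and follows essentially the same route as the paper: the paper's entire proof is the observation that the lemma ``follows from part (iv) of Proposition \ref{paper1}'' (continuity of the dual map $\varphi$), and your argument is exactly that reduction with the routine details — homogeneity of $\varphi$, nonvanishing of $p_v - p_w$, and the passage from edgewise convergence of the functionals $\varphi_{v,w}$ to operator-norm convergence — written out explicitly. The bookkeeping you supply is sound and is precisely what the paper leaves implicit.
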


\begin{proof}
This follows from part \ref{paper1item2} of Proposition \ref{paper1}.
\end{proof}

For a finite graph $G$ we say that a well-positioned framework $(G,p)$ is \textit{regular} if for all $q \in \mathcal{W}(G)$ we have $\rank df_G(p) \geq \rank df_G(q)$. We shall denote the subset of $\mathcal{W}(G)$ of regular placements of $G$ by $\mathcal{R}(G)$.

\begin{lemma}\label{regopen2}
Let $G$ be finite, then $\mathcal{R}(G)$ is a non-empty open subset of $\mathcal{W}(G)$.
\end{lemma}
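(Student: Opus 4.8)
The plan is to exploit two standard facts: that the rank of a linear map is a bounded, integer-valued quantity, which yields non-emptiness, and that rank is lower semicontinuous, which combined with the continuity of $df_G$ from Lemma \ref{rigopcont} yields openness.

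For non-emptiness, I would first note that $\mathcal{W}(G)$ is non-empty, since it is dense in $X^{V(G)}$ by Lemma \ref{wellpos}. For every $q \in \mathcal{W}(G)$ the integer $\rank df_G(q)$ is bounded above by $\min\{\dim X^{V(G)}, |E(G)|\}$, which is finite as $G$ is finite. Hence $\{\rank df_G(q) : q \in \mathcal{W}(G)\}$ is a non-empty bounded set of non-negative integers and so attains a maximum value, say $r$. Any placement $p$ with $\rank df_G(p) = r$ then satisfies $\rank df_G(p) \geq \rank df_G(q)$ for all $q \in \mathcal{W}(G)$ and is therefore regular, giving $\mathcal{R}(G) \neq \emptyset$. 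Moreover, since $r$ is the maximum, $\mathcal{R}(G) = \{q \in \mathcal{W}(G) : \rank df_G(q) = r\} = \{q \in \mathcal{W}(G) : \rank df_G(q) \geq r\}$.

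For openness, I would use that the set $R_{\geq r}$ of maps in $L(X^{V(G)}, \mathbb{R}^{E(G)})$ of rank at least $r$ is open. Fixing bases of $X^{V(G)}$ and $\mathbb{R}^{E(G)}$, a map lies in $R_{\geq r}$ exactly when some $r \times r$ minor of its matrix is non-zero; each minor is a polynomial, hence continuous, in the entries, so the condition that at least one of them is non-zero cuts out an open set. This is precisely the lower semicontinuity of rank. By Lemma \ref{rigopcont} the map $df_G : \mathcal{W}(G) \to L(X^{V(G)}, \mathbb{R}^{E(G)})$ is continuous, so $\mathcal{R}(G) = (df_G)^{-1}(R_{\geq r})$ is open in $\mathcal{W}(G)$.

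There is no serious obstacle here; the only point requiring care is that \emph{regular} is defined through the maximum of the rank over \emph{all} of $\mathcal{W}(G)$, so one must first confirm that this maximum is genuinely attained, which is exactly what the integer-boundedness argument above secures. Once the maximal rank $r$ is pinned down, identifying $\mathcal{R}(G)$ with the preimage of the open set $R_{\geq r}$ makes openness immediate from the lower semicontinuity of rank together with the continuity of the rigidity operator.
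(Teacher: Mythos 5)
Your proof is correct and follows essentially the same route as the paper: both arguments combine the lower semicontinuity of the rank function with the continuity of $df_G$ from Lemma \ref{rigopcont} to exhibit $\mathcal{R}(G)$ as the preimage of an open set. You are in fact slightly more thorough than the paper, since you justify lower semicontinuity via minors and verify explicitly that the maximal rank is attained (hence $\mathcal{R}(G) \neq \emptyset$), a point the paper's proof passes over silently.
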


For this lemma we shall need to use the fact that the rank function on the space of linear maps between finite dimensional normed spaces $X$, $Y$ is \textit{lower semi-continuous} i.e.~for all $c \geq 0$ the set $\{ T \in L(X,Y) : \rank T \geq c \}$ is open.

\begin{proof}
Let $n := \sup \{\rank df_G(p) : p \in \mathcal{W}(G) \}$. The rank function $T \mapsto \rank T$ is lower semi-continuous and by Lemma \ref{rigopcont}, $df_G$ is continuous, thus the map 
\begin{align*}
f: \mathcal{W}(G) \rightarrow \mathbb{N}, ~ p \mapsto \rank df_G(p)
\end{align*}
is lower semi-continuous. As $\mathcal{R}(G) = f^{-1}[[n , \infty)]$ then $\mathcal{R}(G)$ is open.
\end{proof}

We define a \textit{finite flex} of a framework $(G,p)$ to be a continuous path $\alpha$ through a placement $p$ where $\| \alpha_v(t) - \alpha_w(t) \| = \|p_v - p_w \|$ for all $vw \in E(G)$ and $t \in (-\delta,\delta)$. If $\alpha$ is a trivial finite motion of a placement $p$ of $G$ we say $\alpha$ is a \textit{trivial finite flex} of $(G,p)$; we note that $\alpha$ will automatically be a finite flex of $(G,p)$ as isometries preserve distance.

If the only finite flexes of $(G,p)$ are trivial then $(G,p)$ is \textit{continuously rigid (in $X$)}; $(G,p)$ will be defined to be \textit{continuously flexible} if it is not continuously rigid. For a finite framework $(G,p)$ we say $(G,p)$ is \textit{locally rigid (in $X$)} if there exists a neighbourhood $U \subseteq X^{V(G)}$ of $p$ such that $f^{-1}_G[f^{-1}_G(p)] \cap U = \mathcal{O}_p \cap U$; likewise we shall define a framework to be \textit{locally flexible} if it is not locally rigid. We classify these as types of \textit{finite rigidity}.

We define $u \in X^{V(G)}$ to be a \textit{trivial (infinitesimal) flex of $(G,p)$} if $u$ is a trivial motion of $p$. If $(G,p)$ is well-positioned we say that $u \in X^{V(G)}$ is an \textit{(infinitesimal) flex of $(G,p)$} if $df_G(p) u =0$. The following proposition shows a link between finite and infinitesimal flexes for frameworks. 

\begin{lemma}\label{fininflink}
Let $(G,p)$ be a well-positioned framework in $X$ and $\alpha$ a finite flex of $(G,p)$ that is differentiable at $0$, then $(\alpha'_v(0))_{v \in V(G)}$ is an infinitesimal flex of $(G,p)$.
\end{lemma}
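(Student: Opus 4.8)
The plan is to reduce the statement to the chain rule for Fréchet derivatives applied to the composition of the rigidity map with the flex. First I would record that, since $(G,p)$ is well-positioned, Proposition \ref{paper2} guarantees that $f_G$ is (Fréchet) differentiable at $p$ with derivative equal to the rigidity operator $df_G(p)$. This is the only place the well-positioned hypothesis enters, and it is what upgrades the rigidity operator from a formal object to an honest derivative of $f_G$ at the point $p$.

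Next I would observe that the finite flex condition says precisely that $f_G \circ \alpha$ is constant. Indeed, for every $vw \in E(G)$ and every $t \in (-\delta,\delta)$ we have $\|\alpha_v(t) - \alpha_w(t)\| = \|p_v - p_w\|$ by the definition of a finite flex, so $f_G(\alpha(t)) = f_G(p)$ for all $t$. Hence the path $t \mapsto f_G(\alpha(t))$ from $(-\delta,\delta)$ to $\mathbb{R}^{E(G)}$ is identically $f_G(p)$, and therefore its (traditional) derivative at $t = 0$ vanishes.

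The heart of the argument is then the chain rule. Writing $u := (\alpha'_v(0))_{v \in V(G)} = \alpha'(0) \in X^{V(G)}$, I would apply the Fréchet chain rule to the composite $f_G \circ \alpha$ at $t = 0$: since $\alpha$ is differentiable at $0$ and $f_G$ is differentiable at $\alpha(0) = p$, the composite is differentiable at $0$ with $(f_G \circ \alpha)'(0) = df_G(p)\,u$. Combining this with the previous step yields $df_G(p)\,u = 0$, which is exactly the assertion that $u$ is an infinitesimal flex of $(G,p)$.

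The only point requiring care is the chain rule itself. I want to emphasise that one needs only the pointwise differentiability of each factor --- of $\alpha$ at $0$ and of $f_G$ at $p$ --- rather than any $C^1$-regularity, so that the standard Fréchet chain rule applies directly; in particular I would \emph{not} invoke the continuity of $df_G$ supplied by Lemma \ref{rigopcont}, since no smoothness of $\alpha$ or of $f_G$ away from the base point is available or required here. Beyond verifying this hypothesis the argument is purely formal, so I anticipate no genuine obstacle.
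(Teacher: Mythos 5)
Your chain-rule argument is sound for \emph{finite} graphs, and in that case it proves the lemma. The gap is that the lemma is not restricted to finite graphs: under the paper's standing convention (Section \ref{preliminaries}), graphs may have a countably infinite vertex set unless finiteness is stated explicitly, and this lemma --- unlike Proposition \ref{paper2} and Lemma \ref{rigopcont}, which do say ``finite'' --- carries no such hypothesis; it is then invoked to conclude $\mathcal{T}(p) \subseteq \mathcal{F}(G,p)$ for arbitrary well-positioned frameworks. Your proof is tied to finiteness at exactly the step you identify as its heart: Proposition \ref{paper2} is stated only for finite $G$, and this restriction is not cosmetic. For infinite $V(G)$ the space $X^{V(G)}$ carries only the product topology, not a norm, so ``$f_G$ is Fr\'{e}chet differentiable at $p$ with derivative $df_G(p)$'' does not even parse; the rigidity operator $df_G(p)$ is then a formally defined linear map, not a derivative. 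The paper warns of precisely this in Section \ref{preliminaries}: ``If we were to observe the connection between the rigidity map and the rigidity operator of an infinite framework this would not hold to be true.'' So your argument covers only part of the range of the statement, and the part it misses is the part where your strategy fails in principle, not merely in bookkeeping.

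The repair is to run your chain rule edge-by-edge rather than globally, which is the route of the proof the paper cites (\cite[Lemma 2.1.(ii)]{noneuclidean}). Fix $vw \in E(G)$. Since $(G,p)$ is well-positioned, $p_v - p_w \in \smooth$, so by parts \ref{paper1item1} and \ref{paper1item3} of Proposition \ref{paper1} the norm is differentiable at $p_v - p_w$ with derivative $\frac{1}{\|p_v - p_w\|}\varphi(p_v - p_w)$, which equals $\varphi_{v,w}$ by homogeneity of $\varphi$. The path $t \mapsto \alpha_v(t) - \alpha_w(t)$ is differentiable at $0$ with value $p_v - p_w$ there, so the (two-factor, pointwise) chain rule applied to its composition with $\| \cdot \|$ shows that the constant function $t \mapsto \|\alpha_v(t) - \alpha_w(t)\|$ has derivative $\varphi_{v,w}\bigl(\alpha_v'(0) - \alpha_w'(0)\bigr)$ at $t = 0$; hence this quantity vanishes. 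As $vw \in E(G)$ was arbitrary and $df_G(p)$ acts coordinate-wise by these functionals, $df_G(p)\,(\alpha_v'(0))_{v \in V(G)} = 0$ with no finiteness assumption. For finite $G$ this is literally the same computation as yours, coordinate by coordinate, so you lose nothing; what the edge-wise formulation buys is validity in the generality in which the lemma is stated and used.
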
 

\begin{proof}
This follows from the proof of \cite[Lemma 2.1.(ii)]{noneuclidean}.
\end{proof}

Since all trivial flexes of $(G,p)$ are trivial motions of $p$ we shall also denote $\mathcal{T}(p)$ to be the set all trivial infinitesimal flexes $(G,p)$. If $(G,p)$ is well-positioned we define $\mathcal{F}(G,p)$ to be the space of all infinitesimal flexes of $(G,p)$. The latter is clearly a linear space as it is exactly the kernel of the rigidity operator. By Proposition \ref{fininflink} it follows $\mathcal{T}(p) \subseteq \mathcal{F}(G,p)$.

A well-positioned framework $(G,p)$ is \textit{infinitesimally rigid (in $X$)} if every flex is trivial and \textit{infinitesimally flexible (in $X$)} otherwise. We shall define a well-positioned $(G,p)$ framework to be \textit{independent} if the rigidity operator of $G$ at $p$, $df_G(p)$, is surjective (or equivalently, if $G$ is finite, $|E(G)| = \rank df_G(p)$) and define $(G,p)$ to be \textit{dependent} otherwise. If a framework is infinitesimally rigid and independent we shall say that it is \textit{isostatic}. We shall use the convention that any framework with no edges (regardless of placement) is independent and that $(K_1,p)$ is isostatic for any choice of placement $p$. It is immediate that if a framework is independent then its placement is regular, however the reverse does not necessarily hold.

\begin{remark}
In the setting of Euclidean space, all of the above definitions agree with those used in \cite{asiroth} \cite{asirothtwo}.
\end{remark}

\begin{lemma}\label{wecanspan}
Let $(G,p)$ be a finite (possibly not spanning) framework in a $d$-dimensional normed space $ X$ with $|V(G)| \geq d+1$. Suppose $q \in \mathcal{R}(G)$ is full, then the following hold:
\begin{enumerate}[(i)]
\item \label{wecanspan1} If $(G,p)$ is independent then $(G,p)$ is regular and $(G,q)$ is independent.

\item \label{wecanspan2} If $(G,p)$ is infinitesimally rigid then $(G,p)$ is regular, $p$ is full and $(G,q)$ is infinitesimally rigid.
\end{enumerate}
\end{lemma}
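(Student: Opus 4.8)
The plan is to reduce everything to rank counting for the rigidity operator, using the rank--nullity identity $\rank df_G(p) = d|V(G)| - \dim \mathcal{F}(G,p)$ (valid since $G$ is finite, $\dim X = d$, and $\mathcal{F}(G,p) = \ker df_G(p)$), together with the bounds $d \le \dim \mathcal{T}(p) \le \dim \Iso(X)$ from Theorem \ref{flextheorem} and the inclusion $\mathcal{T}(p) \subseteq \mathcal{F}(G,p)$. Throughout, the stated hypotheses (that $(G,p)$ is independent, respectively infinitesimally rigid) already force $(G,p)$ to be well-positioned, so $df_G(p)$ is defined and these rank identities make sense.

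For part \ref{wecanspan1}, I would first observe that independence means $\rank df_G(p) = |E(G)|$, which is the largest value $\rank df_G(\cdot)$ can attain since the codomain is $\mathbb{R}^{E(G)}$. Hence $\rank df_G(p) \geq \rank df_G(q')$ for every $q' \in \mathcal{W}(G)$, so $(G,p)$ is regular. Since $q \in \mathcal{R}(G)$ is itself regular, $\rank df_G(q) \geq \rank df_G(p) = |E(G)|$; combined with the trivial upper bound $\rank df_G(q) \le |E(G)|$ this gives $\rank df_G(q) = |E(G)|$, i.e.\ $(G,q)$ is independent.

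For part \ref{wecanspan2}, the core is a squeeze between two ranks. On the one hand, infinitesimal rigidity of $(G,p)$ says $\mathcal{F}(G,p) = \mathcal{T}(p)$, so by rank--nullity and Theorem \ref{flextheorem},
\[
\rank df_G(p) = d|V(G)| - \dim \mathcal{T}(p) \geq d|V(G)| - \dim \Iso(X).
\]
On the other hand, since $q$ is full, $\dim \mathcal{T}(q) = \dim \Iso(X)$ by Theorem \ref{flextheorem}, and the inclusion $\mathcal{T}(q) \subseteq \mathcal{F}(G,q)$ gives
\[
\rank df_G(q) = d|V(G)| - \dim \mathcal{F}(G,q) \leq d|V(G)| - \dim \Iso(X).
\]
Regularity of $q$ forces $\rank df_G(q) \geq \rank df_G(p)$, and stringing these three inequalities together collapses them all to equalities, with common value $d|V(G)| - \dim \Iso(X)$.

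Finally I would read off the three conclusions. The equality $\rank df_G(p) = \rank df_G(q)$ with $q$ regular shows $p$ attains the maximal rank, so $(G,p)$ is regular. The forced equality $\dim \mathcal{T}(p) = \dim \Iso(X)$ gives, via Theorem \ref{flextheorem}, that $p$ is full. And $\dim \mathcal{F}(G,q) = \dim \Iso(X) = \dim \mathcal{T}(q)$ together with $\mathcal{T}(q) \subseteq \mathcal{F}(G,q)$ yields $\mathcal{F}(G,q) = \mathcal{T}(q)$, i.e.\ $(G,q)$ is infinitesimally rigid. The only place demanding care is the direction of each inequality and the rank--nullity bookkeeping; once the squeeze is set up with the right orientations, every conclusion is immediate, so I do not anticipate any genuine obstacle beyond this accounting.
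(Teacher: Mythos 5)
Your proposal is correct and follows essentially the same route as the paper: part (i) via surjectivity forcing maximal rank, and part (ii) via the same rank--nullity squeeze $d|V(G)| - \dim \mathcal{T}(p) = \rank df_G(p) \leq \rank df_G(q) \leq d|V(G)| - \dim \mathcal{T}(q)$ combined with Theorem \ref{flextheorem} and fullness of $q$. The only difference is presentational (you anchor both ends of the chain at $d|V(G)| - \dim \Iso(X)$ before collapsing, and you spell out the regularity of $p$ in part (ii), which the paper leaves implicit).
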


\begin{proof}
(\ref{wecanspan1}): As $(G,p)$ is independent then $df_G(p)$ is surjective. As surjective linear maps have maximal possible rank then $(G,p)$ is regular. Since $q$ is regular it follows that $(G,q)$ is independent. 

(\ref{wecanspan2}): As $(G,q)$ is regular then by the Rank-Nullity theorem we have
\begin{align*}
d|V(G)| - \dim \mathcal{T}(p) = \rank df_G(p) \leq \rank df_G(q) \leq d|V(G)| - \dim \mathcal{T}(q),
\end{align*}
thus by Theorem \ref{flextheorem}, $\dim \mathcal{T}(q) \leq \dim \mathcal{T}(p) \leq \dim \Iso(X)$. As $q$ is full then by Theorem \ref{flextheorem}, $\dim \mathcal{T}(q) = \dim \Iso (X)$. It follows that $\dim \mathcal{T}(p) = \dim \Iso (X)$ and thus $p$ is full. From the inequality it also follows that $(G,q)$ is infinitesimally rigid.
\end{proof}

\begin{lemma}\label{ind}
Let $(G,p)$ be a independent framework and $(H,q) \subset (G,p)$, then $(H,q)$ is also independent.
\end{lemma}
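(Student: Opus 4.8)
The plan is to reduce independence of $(H,q)$---that is, surjectivity of $df_H(q)$---to independence of $(G,p)$ by exhibiting a commuting square of linear maps, so that the conclusion follows purely formally. First I would record that independence is only defined for well-positioned frameworks, so $(G,p)$ is well-positioned. Since $H \subseteq G$ and $q_v = p_v$ for all $v \in V(H)$, every edge $vw \in E(H) \subseteq E(G)$ satisfies $q_v - q_w = p_v - p_w \in \smooth$; hence $(H,q)$ is well-positioned and, crucially, the functional $\varphi_{v,w}$ is literally the same object whether computed at $p$ or at $q$. In particular $df_H(q)$ is defined.

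Next I would introduce two surjective linear maps: the restriction map $\pi : X^{V(G)} \rightarrow X^{V(H)}$, $x \mapsto x|_{V(H)}$, and the coordinate projection $\rho : \mathbb{R}^{E(G)} \rightarrow \mathbb{R}^{E(H)}$ that forgets the coordinates indexed by $E(G) \setminus E(H)$. The central computation is the identity
\begin{align*}
\rho \circ df_G(p) = df_H(q) \circ \pi ,
\end{align*}
which I would verify componentwise: for $x \in X^{V(G)}$ and $vw \in E(H)$, the $vw$-entry of either side equals $\varphi_{v,w}(x_v - x_w)$, using that the two copies of $\varphi_{v,w}$ agree and that $\pi$ leaves the coordinates indexed by $V(H)$ unchanged. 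This is the only step requiring any care, and it is a direct check against the definition of the rigidity operator.

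With the square in hand the argument closes immediately. Since $(G,p)$ is independent, $df_G(p)$ is surjective; composing with the surjection $\rho$ shows that $df_H(q) \circ \pi = \rho \circ df_G(p)$ is surjective, and a surjective composition forces its outer factor $df_H(q)$ to be surjective. Hence $(H,q)$ is independent.

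I expect the main (modest) obstacle to be phrasing everything through surjectivity rather than the rank identity $|E(G)| = \rank df_G(p)$, so that the proof applies verbatim to the possibly infinite graphs allowed by the definition; framing independence as surjectivity of the rigidity operator makes the square above do all the work. The empty-edge convention is automatically consistent, as the rigidity operator into $\mathbb{R}^\emptyset$ is trivially surjective.
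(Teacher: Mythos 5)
Your proof is correct and takes essentially the same approach as the paper's: the paper fixes $a \in \mathbb{R}^{E(H)}$, extends it by zero to an element of $\mathbb{R}^{E(G)}$, takes a preimage $x$ under the surjection $df_G(p)$, and checks $df_H(q)(x|_{V(H)}) = a$, which is exactly the pointwise form of your commuting square $\rho \circ df_G(p) = df_H(q) \circ \pi$. The differences (making the square explicit, noting that $(H,q)$ is well-positioned with the same functionals $\varphi_{v,w}$, and concluding via surjectivity of $\rho$) are presentational rather than substantive.
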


\begin{proof}
Choose $a \in \mathbb{R}^{E(H)}$. As $a \times (0)_{vw \in E(G) \setminus E(H)} \in \mathbb{R}^{E(G)}$ and $(G,p)$ is independent there exists $x \in X^{V(G)}$ such that $df_G(p)(x) = a \times (0)_{vw \in E(G) \setminus E(H)}$. We now note $df_H(q)(x|_{V(H)}) =a$ as required.
\end{proof}

The following gives us some necessary and sufficient conditions for infinitesimal rigidity.

\begin{theorem}\cite[Theorem 10]{maxwell}\label{maxwellpaper}
Let finite $(G,p)$ be well-positioned in $X$, then the following hold:
\begin{enumerate}[(i)]
\item If $(G,p)$ is independent then $|E(G)| = (\dim X) |V(G)| - \dim \mathcal{F}(G,p)$.

\item If $(G,p)$ is infinitesimally rigid then $|E(G)| \geq (\dim X) |V(G)| - \dim \mathcal{T}(p)$.
\end{enumerate}
\end{theorem}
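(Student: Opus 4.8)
The plan is to derive both statements from the Rank-Nullity theorem applied to the rigidity operator $df_G(p): X^{V(G)} \rightarrow \mathbb{R}^{E(G)}$, which is a well-defined continuous linear map precisely because $(G,p)$ is well-positioned (by Proposition \ref{paper2}). The three facts I would rely on throughout are that the domain has dimension $\dim X^{V(G)} = (\dim X)|V(G)|$, that $\mathcal{F}(G,p)$ is by definition the kernel of $df_G(p)$, and that $\mathcal{T}(p) \subseteq \mathcal{F}(G,p)$ as noted after Lemma \ref{fininflink}.

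For part (i), I would use that independence of $(G,p)$ means $df_G(p)$ is surjective, so that $\rank df_G(p) = \dim \mathbb{R}^{E(G)} = |E(G)|$. Applying Rank-Nullity,
\begin{align*}
(\dim X)|V(G)| = \rank df_G(p) + \dim \ker df_G(p) = |E(G)| + \dim \mathcal{F}(G,p),
\end{align*}
and rearranging yields the claimed equality.

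For part (ii), I would first observe that infinitesimal rigidity forces $\mathcal{F}(G,p) = \mathcal{T}(p)$: the inclusion $\mathcal{T}(p) \subseteq \mathcal{F}(G,p)$ always holds, while infinitesimal rigidity supplies the reverse inclusion, since by definition every flex is then trivial. Applying Rank-Nullity and the trivial bound $\rank df_G(p) \leq \dim \mathbb{R}^{E(G)} = |E(G)|$, I would obtain
\begin{align*}
(\dim X)|V(G)| - \dim \mathcal{T}(p) = \rank df_G(p) \leq |E(G)|,
\end{align*}
which rearranges to the required inequality.

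I do not expect any substantial obstacle; both assertions are immediate consequences of finite-dimensional linear algebra once the relevant dimensions and subspaces are correctly identified. The only points requiring care are confirming that $df_G(p)$ is genuinely linear and defined on all of $X^{V(G)}$ (which is exactly what well-positionedness guarantees via Proposition \ref{paper2}), and faithfully translating the definitions of \emph{independent} and \emph{infinitesimally rigid} into the algebraic statements ``$df_G(p)$ is surjective'' and ``$\ker df_G(p) = \mathcal{T}(p)$'' respectively.
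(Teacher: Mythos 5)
Your proof is correct. Note that the paper does not actually prove this statement; it is imported by citation from Kitson and Schulze \cite[Theorem 10]{maxwell}, and your Rank--Nullity argument is precisely the standard one used there (and the same method the paper itself invokes for the closely related Proposition \ref{isostatic}). The two points you flag as needing care are exactly the right ones, and both are settled by the paper's definitions: $\mathcal{F}(G,p)$ is defined as $\ker df_G(p)$, independence is defined as surjectivity of $df_G(p)$, and infinitesimal rigidity gives $\mathcal{F}(G,p) \subseteq \mathcal{T}(p)$, which together with $\mathcal{T}(p) \subseteq \mathcal{F}(G,p)$ (Lemma \ref{fininflink}) yields the equality you use in part (ii).
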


The following gives an equivalence for isostaticity.

\begin{proposition}\label{isostatic}
Let $(G,p)$ be a well-positioned framework in $X$. If any two of the following properties hold then so does the third (and $(G,p)$ is isostatic):
\begin{enumerate}[(i)]
\item $|E(G)| = (\dim X)|V(G)| - \dim \mathcal{T}(p)$
\item $(G,p)$ is infinitesimally rigid
\item $(G,p)$ is independent. 
\end{enumerate}
\end{proposition}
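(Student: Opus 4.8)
The plan is to reduce all three properties to linear identities among the three integers $|E(G)|$, $\dim \mathcal{F}(G,p)$ and $\dim \mathcal{T}(p)$, after which the symmetric ``any two imply the third'' statement becomes a one-line cancellation. First I would apply the Rank-Nullity theorem to the rigidity operator $df_G(p) : X^{V(G)} \to \mathbb{R}^{E(G)}$. Since $\mathcal{F}(G,p)$ is by definition the kernel of $df_G(p)$, this yields the master identity $\rank df_G(p) + \dim \mathcal{F}(G,p) = (\dim X)|V(G)|$, which makes sense precisely because $G$ is finite, so that $X^{V(G)}$ (and hence $\mathcal{F}(G,p)$) is finite dimensional.

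Next I would rewrite conditions (ii) and (iii) in dimensional terms. For (iii), independence means $df_G(p)$ is surjective onto $\mathbb{R}^{E(G)}$, i.e.~$\rank df_G(p) = |E(G)|$; substituting into the master identity shows (iii) is equivalent to the numerical statement $|E(G)| = (\dim X)|V(G)| - \dim \mathcal{F}(G,p)$. For (ii), the key input is the inclusion $\mathcal{T}(p) \subseteq \mathcal{F}(G,p)$ recorded after Lemma \ref{fininflink}: since both are finite dimensional, infinitesimal rigidity (every flex is trivial, i.e.~$\mathcal{F}(G,p) = \mathcal{T}(p)$) is equivalent to the numerical equality $\dim \mathcal{F}(G,p) = \dim \mathcal{T}(p)$. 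Condition (i) is already in the desired numerical form, so after this step the three statements read $|E(G)| = (\dim X)|V(G)| - \dim \mathcal{T}(p)$, the analogous identity with $\mathcal{F}$ in place of $\mathcal{T}$, and $\dim \mathcal{F}(G,p) = \dim \mathcal{T}(p)$.

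With these reformulations in hand, any two of the three force the third by cancelling the common term $(\dim X)|V(G)| - |E(G)|$: the two ``counting'' identities together give $\dim \mathcal{F}(G,p) = \dim \mathcal{T}(p)$ (hence infinitesimal rigidity), while the dimension equality combined with either counting identity produces the other. Finally, whenever all three hold, in particular (ii) and (iii) hold, so $(G,p)$ is both infinitesimally rigid and independent, and is therefore isostatic by definition.

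I do not expect a genuine obstacle here; the single point that demands care is the equivalence used for (ii), namely that equality of the finite dimensions $\dim \mathcal{T}(p)$ and $\dim \mathcal{F}(G,p)$ upgrades to the set equality $\mathcal{T}(p) = \mathcal{F}(G,p)$. This is exactly where the inclusion $\mathcal{T}(p) \subseteq \mathcal{F}(G,p)$ and the finite-dimensionality of $\mathcal{F}(G,p)$ (and thus the standing finiteness of $G$) are indispensable; everything else is Rank-Nullity and bookkeeping.
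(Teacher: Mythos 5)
Your proposal is correct and takes essentially the same approach as the paper, whose proof consists of the one-line instruction to apply the Rank--Nullity theorem to the rigidity operator and follow the counting method of \cite[Lemma 2.6.1.c]{comrig}; your write-up simply fills in that bookkeeping explicitly. The one delicate point, that the inclusion $\mathcal{T}(p) \subseteq \mathcal{F}(G,p)$ of finite-dimensional linear subspaces upgrades the dimension equality to the set equality needed for infinitesimal rigidity, is correctly identified and handled in your argument.
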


\begin{proof}
Apply the Rank-Nullity theorem to the rigidity operator of $G$ at $p$. The result follows the same method as \cite[Lemma 2.6.1.c]{comrig}.
\end{proof}

Using the results from the last section we may now give a stronger result for independent frameworks. 

\begin{corollary}\label{papernecessary}
Let $(G,p)$ be a finite independent framework with $|V(G)|\geq \dim X +1$. Then for all $H \subset G$ with $|V(H)| \geq d+1$ we have $|E(H)| \leq (\dim X)|V(H)| -  \dim \Iso (X)$. If $(G,p)$ is also isostatic then $|E(G)| = (\dim X)|V(G)| -  \dim \Iso (X)$.
\end{corollary}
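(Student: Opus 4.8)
The plan is to establish the inequality for an arbitrary subgraph $H$ first and then deduce the edge count of an isostatic $(G,p)$; throughout write $d = \dim X$.

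Fix a subgraph $H \subseteq G$ with $|V(H)| \geq d+1$. By Lemma \ref{ind} the subframework $(H, p|_{V(H)})$ is independent. The essential point is that I would \emph{not} argue with $p|_{V(H)}$ directly, since its placement points may span too small an affine subspace for $p|_{V(H)}$ to be full, and then $\dim \mathcal{T}(p|_{V(H)})$ could be strictly smaller than $\dim \Iso(X)$; instead I would transport the problem to a generic full regular placement of $H$, whose trivial-flex space attains the maximal dimension $\dim \Iso(X)$. First I would exhibit a placement $q \in \mathcal{R}(H) \cap \Full(V(H))$: by Lemma \ref{regopen2} we may write $\mathcal{R}(H) = \mathcal{W}(H) \cap O$ with $O$ open and non-empty, so $O \cap \Full(V(H))$ is a non-empty open set because $\Full(V(H))$ is dense by Proposition \ref{openfull} (here $|V(H)| \geq d+1$ is used), and since $\mathcal{W}(H)$ is dense by Lemma \ref{wellpos} it meets this set, producing the desired full regular $q$.

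Applying part \ref{wecanspan1} of Lemma \ref{wecanspan} to the independent framework $(H, p|_{V(H)})$ together with the full regular $q$ then gives that $(H,q)$ is independent. Now Theorem \ref{maxwellpaper}(i) applied to $(H,q)$ yields
\begin{align*}
|E(H)| = d\,|V(H)| - \dim \mathcal{F}(H,q).
\end{align*}
Since $q$ is full, Theorem \ref{flextheorem} gives $\dim \mathcal{T}(q) = \dim \Iso(X)$, and the inclusion $\mathcal{T}(q) \subseteq \mathcal{F}(H,q)$ forces $\dim \mathcal{F}(H,q) \geq \dim \Iso(X)$. Substituting produces $|E(H)| \leq d\,|V(H)| - \dim \Iso(X)$, the first claim.

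For the isostatic case I would run the above argument with $H = G$ (permissible since $|V(G)| \geq d+1$) to obtain the upper bound $|E(G)| \leq d\,|V(G)| - \dim \Iso(X)$, and then invoke infinitesimal rigidity for the matching lower bound: Theorem \ref{maxwellpaper}(ii) gives $|E(G)| \geq d\,|V(G)| - \dim \mathcal{T}(p)$, while $\dim \mathcal{T}(p) \leq \dim \Iso(X)$ by Theorem \ref{flextheorem}, so $|E(G)| \geq d\,|V(G)| - \dim \Iso(X)$. The two bounds coincide, giving the equality. The main obstacle is the first paragraph's replacement of $p|_{V(H)}$ by a full placement: the sharp constant $\dim \Iso(X)$ is available only because fullness maximises the trivial-flex dimension, and independence is precisely the hypothesis that lets the placement-independent quantity $|E(H)|$ be read off from the convenient generic placement $q$ via Lemma \ref{wecanspan}; the remaining steps are routine bookkeeping with the Maxwell counts.
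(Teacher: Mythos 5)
Your proof is correct, but it routes through the lemmas in a different order from the paper. The paper genericizes first and restricts second: it picks a single placement $p' \in \mathcal{R}(G) \cap \mathcal{G}(G)$ of the whole graph (non-empty by the same density argument you use), notes that $(G,p')$ is independent because independence of $(G,p)$ forces $df_G(p)$ to have maximal rank $|E(G)|$, and then restricts, so that $q := p'|_{V(H)}$ is automatically in general position, hence spanning (as $|V(H)| \geq d+1$), hence isometrically full by Corollary \ref{spanfull} and Proposition \ref{homeo2.2}; independence of $(H,q)$ then comes from Lemma \ref{ind} applied to $(H,q) \subseteq (G,p')$. You restrict first and genericize second: Lemma \ref{ind} applied to $(H,p|_{V(H)}) \subseteq (G,p)$, then a density argument producing $q \in \mathcal{R}(H) \cap \Full(V(H))$ via Proposition \ref{openfull}, with the transfer of independence from $(H,p|_{V(H)})$ to $(H,q)$ done by part \ref{wecanspan1} of Lemma \ref{wecanspan}. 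Both transfers are legitimate, and your construction of $q$ is sound since $\mathcal{R}(H)$ is non-empty and open in $\mathcal{W}(H)$ by Lemma \ref{regopen2}. The treatments of the isostatic case genuinely differ: the paper computes the count exactly, using Proposition \ref{isostatic} to get $|E(G)| = (\dim X)|V(G)| - \dim \mathcal{T}(p)$ and part \ref{wecanspan2} of Lemma \ref{wecanspan} to show that $p$ itself is full, so that $\dim \mathcal{T}(p) = \dim \Iso(X)$ by Theorem \ref{flextheorem}; you instead sandwich $|E(G)|$ between the upper bound from independence and the lower bound $|E(G)| \geq (\dim X)|V(G)| - \dim \mathcal{T}(p) \geq (\dim X)|V(G)| - \dim \Iso(X)$ coming from part (ii) of Theorem \ref{maxwellpaper} and Theorem \ref{flextheorem}. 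Your sandwich is slightly more economical, as it never needs to establish fullness of $p$; the paper's route yields the additional information that an isostatic placement on at least $\dim X + 1$ vertices is itself full, i.e.\ its trivial motion space attains the maximal dimension $\dim \Iso(X)$.
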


\begin{proof}
By Lemma \ref{wellpos} and Lemma \ref{regopen2} since $\mathcal{G}(G)$ is an open dense subset of $X^{V(G)}$ and $\mathcal{G}(G)^c$ has measure zero the set $\mathcal{R}(G) \cap \mathcal{G}(G)$ is non-empty, thus we choose $p'$ to be a regular placement of $G$ in general position. Since $(G,p')$ is regular it follows that it is also independent.

Define $q := p'|_{V(H)}$, then $(H,q)$ is in general position. As $(H,q) \subseteq (G,p')$ then by Proposition \ref{ind}, $(H,q)$ is independent; furthermore as $H$ has at least $d+1$ vertices then $q$ is spanning. By Theorem \ref{maxwellpaper} we have $|E(H)| = (\dim X)|V(H)| - \dim \mathcal{F}(H,q)$. By Corollary \ref{spanfull} and Proposition \ref{homeo2.2}, $\dim \mathcal{T}(q) = \dim \Iso(X)$, thus as $\mathcal{T}(q) \subset \mathcal{F}(H,q)$ we have the required inequality.

If $(G,p)$ is also isostatic then by Proposition \ref{isostatic}, $|E(G)| = (\dim X)|V(G)| - \dim \mathcal{T}(p)$. By part \ref{wecanspan2} of Lemma \ref{wecanspan}, $p$ is full, thus by Theorem \ref{flextheorem}, $\dim \mathcal{T}(p) = \dim \Iso (X)$ as required.
\end{proof}

\subsection{Proof of Theorem \ref{asimowroth}}

For a finite graph $G$ we say that a well-positioned framework $(G,p)$ is \textit{constant} if there is a neighbourhood $\mathcal{N}(p) \subset \mathcal{W}(G)$ of $p$ such that $\rank df_G(q) = \rank df_G(p)$ for all $q \in \mathcal{N}(p)$. We shall denote $\mathcal{C}(G)$ to be the subset of $\mathcal{W}(G)$ of constant placements of $G$. 

For Euclidean spaces $\mathcal{R}(G)= \mathcal{C}(G)$ as $\mathcal{R}(G)$ is an open dense subset of $X^{V(G)}$ (see \cite[Section 3]{asiroth} for more details).

\begin{lemma}\label{paperman}
Let $(G,p)$ a constant finite framework in $X$, then there exists an open neighbourhood $U \subset X^{V(G)}$ of $p$ such that $f^{-1}_G [f_G(p)] \cap U$ is a $C^1$-manifold with tangent space $\mathcal{F}(G,p)$ at $p$ and $\mathcal{O}_p \cap U$ is a $C^1$-submanifold of $f^{-1}_G [f_G(p)] \cap U$.
\end{lemma}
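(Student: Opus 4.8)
The plan is to reduce the statement to the Constant Rank Theorem, since the hypothesis ``constant'' is precisely what supplies the local constancy of rank that this theorem requires. First I would record that $f_G$ is $C^1$ on the open set $\mathcal{W}(G)$: by Proposition \ref{paper2} the map $f_G$ is differentiable at every well-positioned placement with derivative $df_G$, and by Lemma \ref{rigopcont} the assignment $x \mapsto df_G(x)$ is continuous on $\mathcal{W}(G)$, so $f_G$ is $C^1$ there. Since $(G,p)$ is constant, by definition there is a neighbourhood $\mathcal{N}(p) \subseteq \mathcal{W}(G)$ of $p$ on which $\rank df_G(q) = \rank df_G(p) =: r$ for all $q \in \mathcal{N}(p)$. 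Thus the restriction of $f_G$ to $\mathcal{N}(p)$ is a $C^1$-map of constant rank $r$ between the finite-dimensional manifolds $\mathcal{N}(p)$ (open in $X^{V(G)}$, of dimension $d|V(G)|$) and $\mathbb{R}^{E(G)}$.

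Next I would apply the Constant Rank Theorem (see \cite{manifold}) at the point $p$: there is an open set $U \subseteq \mathcal{N}(p)$ containing $p$, together with $C^1$-charts straightening $f_G$ to a linear projection, so that $f_G^{-1}[f_G(p)] \cap U$ is a $C^1$-submanifold of $X^{V(G)}$ of dimension $d|V(G)| - r$. Its tangent space at $p$ is $\ker df_G(p)$, which by the definition of the flex space is exactly $\mathcal{F}(G,p)$. This settles the first assertion.

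For the final clause, recall that $\mathcal{O}_p$ is a smooth, hence $C^1$, embedded (and closed) submanifold of $X^{V(G)}$ by Lemma \ref{homeo2.0}, with tangent space $\mathcal{T}(p)$ at $p$, and that $\mathcal{O}_p \subseteq f_G^{-1}[f_G(p)]$ because every trivial finite flex is a finite flex (isometries preserve distances). Hence $\mathcal{O}_p \cap U$ and $N := f_G^{-1}[f_G(p)] \cap U$ are both embedded $C^1$-submanifolds of $U$ with $\mathcal{O}_p \cap U \subseteq N$. I would then invoke the standard fact that an embedded $C^1$-submanifold contained in another embedded $C^1$-submanifold of the same ambient manifold is itself an embedded $C^1$-submanifold of the larger one: concretely, one composes the inclusion $\mathcal{O}_p \cap U \hookrightarrow U$ with a local $C^1$-chart flattening $N$ (equivalently, a local $C^1$-retraction onto $N$) to check that $\mathcal{O}_p \cap U \rightarrow N$ is a $C^1$-immersion and a topological embedding, both sets carrying the subspace topology. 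The tangent inclusion $\mathcal{T}(p) \subseteq \mathcal{F}(G,p)$ guarantees the tangent-space data is consistent at $p$, so $\mathcal{O}_p \cap U$ is a $C^1$-submanifold of $N$, as required.

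The main obstacle is the application of the Constant Rank Theorem, which is exactly the step that consumes the ``constant'' hypothesis; without local constancy of $\rank df_G$ the level set need not be a manifold at all, and it is here that the argument genuinely departs from the easier regular (maximal-rank) case. The secondary technical point is the submanifold-of-a-submanifold step, which is routine once both sets are known to be embedded in $X^{V(G)}$ but does require the care indicated above regarding the immersion and the matching topologies.
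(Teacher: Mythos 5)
Your proposal is correct and takes essentially the same route as the paper: $C^1$-smoothness of $f_G$ near $p$ via Proposition \ref{paper2} and Lemma \ref{rigopcont}, the Constant Rank Theorem to realise $f_G^{-1}[f_G(p)] \cap U$ as a $C^1$-manifold with tangent space $\ker df_G(p) = \mathcal{F}(G,p)$, and then the inclusion of the orbit (a closed smooth submanifold of $X^{V(G)}$ contained in the level set) as a $C^1$-embedding. One small correction: $\mathcal{W}(G)$ need not be open in a general normed space (the paper instead observes that the constancy hypothesis forces $p$ to be an \emph{interior} point of $\mathcal{W}(G)$), but this is harmless here since your argument only ever uses $C^1$-smoothness on the open neighbourhood $\mathcal{N}(p)$.
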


\begin{proof}
Since $(G,p)$ is constant $p$ is an interior point of $\mathcal{W}(G)$, so by Proposition \ref{paper2} and Lemma \ref{rigopcont}, $f_G$ is $C^1$-differentiable with constant rank on an open neighbourhood of $p$ in $X^{V(G)}$. By the Constant Rank Theorem (\cite[Theorem 2.5.15]{manifold}) there exists an open neighbourhood $U \subset X^{V(G)}$ of $p$ such that $f^{-1}_G [f_G(p)] \cap U$ is a $C^1$-manifold with tangent space $\ker df_G(p) =\mathcal{F}(G,p)$ at $p$.

By Theorem \ref{homeo2}, $\mathcal{O}_p$ is a smooth manifold. As $\mathcal{O}_p \cap U \subseteq f^{-1}_G[f_G(p)] \cap U \subseteq X^{V(G)}$ and both are $C^1$-submanifolds of $X^{V(G)}$ then the inclusion map $\mathcal{O}_p \cap U \hookrightarrow  f^{-1}_G[f_G(p)] \cap U$ is a $C^1$-embedding, thus $\mathcal{O}_p \cap U$ is a $C^1$-submanifold of $f^{-1}_G [f_G(p)] \cap U$.
\end{proof}

We are now ready to prove Theorem \ref{asimowroth}.

\begin{proof}[Theorem \ref{asimowroth}]
By Lemma \ref{paperman}, $\mathcal{O}_p \cap U$ is a $C^1$-submanifold of $f^{-1}_G [f_G(p)] \cap U$ for some open neighbourhood $U$ of $p$. As manifolds are locally path-connected we may assume we chose $U$ small enough such that $f^{-1}_G [f_G(p)] \cap U$ and $\mathcal{O}_p \cap U$ are path-connected. 

(Infinitesimal rigidity $\Leftrightarrow$ Local rigidity): Since $\mathcal{O}_p \cap U$ is a $C^1$-submanifold of $f^{-1}_G [f_G(p)] \cap U$ we have 
\begin{align*}
&f^{-1}_G [f_G(p)] \cap U' = \mathcal{O}_p \cap U' \text{ for some open neighbourhood $U' \subseteq U$ of $p$} \\
\Leftrightarrow \qquad &T_p (f^{-1}_G[f_G(p)] \cap U )= T_p (\mathcal{O}_p \cap U) \\
\Leftrightarrow \qquad &\mathcal{F}(G,p) = \mathcal{T}(p);
\end{align*}
this is equivalent to saying $(G,p)$ is infinitesimally rigid if and only if $(G,p)$ is locally rigid.

(Continuous rigidity $\Rightarrow$ Local rigidity): Suppose $(G,p)$ is continuously rigid. Choose $q \in f^{-1}_G [f_G(p)] \cap U$, then there exists a continuous path from $p$ to $q$ in $f^{-1}_G[f_G(p)] \cap U$. This implies that we may define a finite flex $\alpha$ of $(G,p)$ such that $\alpha(t_0) =q$ for some $t_0 \in (-\delta,\delta)$. Since $(G,p)$ is continuously rigid then $\alpha$ is trivial and thus a continuous path in $\mathcal{O}_p$. It now follows $q \in \mathcal{O}_p \cap U$ as required.

(Local rigidity $\Rightarrow$ Continuous rigidity): Suppose $(G,p)$ is locally rigid, then there exists $\epsilon >0$ such that $B_\epsilon(p) \subset U$ (the open ball with respect to the normed space $(X^{V(G)}, \| \cdot\|_{V(G)})$) and $f^{-1}_G [f_G(p)] \cap B_\epsilon(p) = \mathcal{O}_p \cap B_\epsilon(p)$. First note that both $f^{-1}_G [f_G(p)]$ and $\mathcal{O}_p$ are invariant under $\Iso (X)$. Choose any $q \in \mathcal{O}_p$, then there exists $g \in \Iso (X)$ such that $g.p=q$. We now note that 
\begin{align*}
\mathcal{O}_p \cap B_\epsilon(q) = g. \left(\mathcal{O}_p \cap B_\epsilon(p)\right) = g. \left( f^{-1}_G [f_G(p)] \cap B_\epsilon(p) \right) = f^{-1}_G [f_G(p)] \cap B_\epsilon(q).
\end{align*}
As this holds for all $q \in \mathcal{O}_p$ then $\mathcal{O}_p$ is open in $f_G^{-1}[f_G(p)]$. By Lemma \ref{homeo2.0}, $\mathcal{O}_p$ is closed in $X^{V(G)}$, thus $\mathcal{O}_p$ is clopen in $f_G^{-1}[f_G(p)]$.

Define $f_G^{-1} [f_G(p)]^\Gamma$ to be the path-connected component of $f_G^{-1} [f_G(p)]$ that contains $p$ with the subspace topology, then the only clopen set in $f_G^{-1} [f_G(p)]^\Gamma$ is itself. Define $\mathcal{O}_p^\Gamma := \mathcal{O}_p \cap f_G^{-1} [f_G(p)]^\Gamma$, then $\mathcal{O}_p^\Gamma$ is clopen since $\mathcal{O}_p$ is clopen. This implies that $\mathcal{O}_p^\Gamma = f_G^{-1} [f_G(p)]^\Gamma$ and so any finite flex $\alpha$ lies in $\mathcal{O}_p$.
\end{proof}

\begin{remark}
Suppose $G$ is any finite graph and $\smooth$ is an open subset of $X$ (an example would be any $\ell^d_q$ space). We note $\mathcal{W}(G)$ will be an open subset of $X^{V(G)}$ and so by Lemma \ref{regopen2}, $\mathcal{R}(G)$ will be an open subset of $X^{V(G)}$. It now follows that every regular placement will be constant, thus by Theorem \ref{asimowroth}, if $(G,p)$ is infinitesimally rigid then it will be continuously and locally rigid also.
\end{remark}

\section{Flexibility of small frameworks and stronger bounds for \texorpdfstring{$\mathcal{T}(p)$}{T(p)}}\label{small section}

For any placement $p$ in $ X$ we shall define $\mathcal{T}_2(p)$ to denote the space of trivial motions of $p$ in $(X,\| \cdot \|_2)$, the unique Euclidean space for $(X,\| \cdot\|)$ as defined in Lemma \ref{paperisolem}. If we refer to just $X$ we shall be referring to the general normed space $(X,\| \cdot\|)$.

\begin{lemma}\label{papertriv}
$\mathcal{T}(p)$ is a linear subspace of $\mathcal{T}_2(p)$.
\end{lemma}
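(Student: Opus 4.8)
The plan is to split the statement into two routine observations: first that $\mathcal{T}(p)$ and $\mathcal{T}_2(p)$ are each linear subspaces of $X^S$, and then that $\mathcal{T}(p) \subseteq \mathcal{T}_2(p)$ as sets; together these give that $\mathcal{T}(p)$ is a linear subspace of $\mathcal{T}_2(p)$. For the first point, I would apply Theorem \ref{homeo2} in $(X, \|\cdot\|)$: the orbit $\mathcal{O}_p$ is a smooth embedded submanifold of $X^S$ whose tangent space at $p$ is exactly $\mathcal{T}(p)$, and the tangent space of an embedded submanifold of a vector space is a linear subspace of that space, so $\mathcal{T}(p)$ is a linear subspace of $X^S$. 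Applying Theorem \ref{homeo2} instead in the Euclidean space $(X, \|\cdot\|_2)$ shows in the same way that $\mathcal{T}_2(p)$ is a linear subspace of $X^S$. Hence, once the set inclusion is known, the fact that $\mathcal{T}(p)$ is itself a linear subspace contained in the linear subspace $\mathcal{T}_2(p)$ immediately yields the claim.

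The crux of the inclusion is Lemma \ref{paperisolem}(\ref{paperisolemitem1}), which states that $\Iso(X, \|\cdot\|)$ is a subgroup of $\Iso(X, \|\cdot\|_2)$; in particular every $\|\cdot\|$-isometry is also a $\|\cdot\|_2$-isometry. Writing $\mathcal{O}_p^{2}$ for the orbit of $p$ under $\Iso(X, \|\cdot\|_2)$, this subgroup relation gives $\mathcal{O}_p \subseteq \mathcal{O}_p^{2}$. I would then take an arbitrary $u \in \mathcal{T}(p)$ and, by the definition of a trivial infinitesimal motion, pick a trivial finite motion $\alpha$ of $p$ in $(X, \|\cdot\|)$ that is differentiable at $0$ with $u_v = \alpha_v'(0)$. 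Since $\alpha(t) \in \mathcal{O}_p \subseteq \mathcal{O}_p^{2}$ for all $t$, the very same path $\alpha$ is a trivial finite motion of $p$ in $(X, \|\cdot\|_2)$, and it is still differentiable at $0$ with the same derivative; therefore $u \in \mathcal{T}_2(p)$. This establishes $\mathcal{T}(p) \subseteq \mathcal{T}_2(p)$, and combined with the first paragraph completes the proof.

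I do not anticipate a genuine obstacle: the entire mathematical content is packaged into the subgroup inclusion furnished by Lemma \ref{paperisolem}, and everything else is unwinding the definition of $\mathcal{T}(p)$ together with the identification of $\mathcal{T}(p)$ and $\mathcal{T}_2(p)$ as orbit tangent spaces via Theorem \ref{homeo2}. The only point demanding a little care is verifying that one and the same path $\alpha$ qualifies as a trivial finite motion under both norms, which is precisely the place where $\mathcal{O}_p \subseteq \mathcal{O}_p^{2}$ is invoked. As an alternative to the definitional argument for the inclusion, one could instead note that $\mathcal{O}_p$ and $\mathcal{O}_p^{2}$ are both embedded submanifolds of $X^S$ with $\mathcal{O}_p \subseteq \mathcal{O}_p^{2}$, so the inclusion $\mathcal{O}_p \hookrightarrow \mathcal{O}_p^{2}$ is a smooth embedding (mimicking the submanifold argument used in Lemma \ref{paperman}) and hence $\mathcal{T}(p) = T_p \mathcal{O}_p \subseteq T_p \mathcal{O}_p^{2} = \mathcal{T}_2(p)$.
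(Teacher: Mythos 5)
Your proposal is correct and follows essentially the same route as the paper: the paper's proof likewise invokes Lemma \ref{paperisolem} to get $\Iso(X,\|\cdot\|) \subseteq \Iso(X,\|\cdot\|_2)$ and concludes $\mathcal{T}(p) \subseteq \mathcal{T}_2(p)$ directly from the resulting orbit inclusion. Your additional verification that both spaces are linear subspaces of $X^{S}$ (via Theorem \ref{homeo2}) merely makes explicit what the paper leaves implicit.
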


\begin{proof}
By Lemma \ref{paperisolem}, $\Iso (X,\| \cdot \|) \subseteq \Iso (X, \| \cdot \|_2)$. It now follows that $\mathcal{T}(p) \subseteq \mathcal{T}_2(p)$.
\end{proof}

For Euclidean spaces we have the following equality for the dimension of the space of trivial motions for non-spanning placements.

\begin{lemma}\cite[Lemma 2.3.3]{comrig}\label{euclidtrivflex}
Let $(p,S)$ be a placement in $ X$ where $d = \dim X$ and $n$ is the dimension of the affine span of $\{p_v : v \in S \}$. Then 
\begin{align*}
\dim \mathcal{T}_2(p) =\frac{(n+1)(2d - n)}{2}.
\end{align*}
\end{lemma}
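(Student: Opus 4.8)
The plan is to compute $\dim \mathcal{T}_2(p)$ via the orbit--stabiliser structure from Section \ref{trivial section}, working entirely inside the Euclidean space $(X,\|\cdot\|_2)$ and writing $\stab_p$ for the stabiliser in $\Iso(X,\|\cdot\|_2)$. Applying Corollary \ref{homeo2.1} in $(X,\|\cdot\|_2)$, the map $d\phi_p(\iota): T_\iota \Iso(X,\|\cdot\|_2) \to \mathcal{T}_2(p)$ is surjective with kernel $T_\iota \stab_p$, so the Rank--Nullity theorem gives
\begin{align*}
\dim \mathcal{T}_2(p) = \dim \Iso(X,\|\cdot\|_2) - \dim \stab_p .
\end{align*}
Since $(X,\|\cdot\|_2)$ is a $d$-dimensional Euclidean space, part \ref{paperisolemitem2} of Lemma \ref{paperisolem} supplies $\dim \Iso(X,\|\cdot\|_2) = \tfrac{d(d+1)}{2}$. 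Thus everything reduces to determining $\dim \stab_p$.

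Next I would identify $\stab_p$ explicitly. As every Euclidean isometry is affine (Mazur--Ulam) and each $g \in \stab_p$ fixes every $p_v$, such a $g$ fixes the whole affine span $A := \affspan \{p_v : v \in S\}$ pointwise, and conversely; hence $\stab_p$ is precisely the group of isometries fixing $A$ pointwise. Choosing affinely independent $p_{v_0}, \ldots, p_{v_n}$ spanning $A$ and translating so that $p_{v_0}=0$, this becomes the group of orthogonal maps $G$ restricting to the identity on the $n$-dimensional subspace $V := \spann\{p_{v_i} - p_{v_0} : i = 1,\ldots,n\}$. Any such $G$ preserves $V^\perp$ and acts on it as an arbitrary orthogonal map while fixing $V$, so $\stab_p$ is isomorphic as a Lie group to $\Isolin(V^\perp)$ for the $(d-n)$-dimensional Euclidean space $V^\perp$. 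Invoking part \ref{paperisolemitem2} of Lemma \ref{paperisolem} once more yields $\dim \stab_p = \tfrac{(d-n)(d-n-1)}{2}$.

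Finally I would substitute and expand:
\begin{align*}
\dim \mathcal{T}_2(p) = \frac{d(d+1)}{2} - \frac{(d-n)(d-n-1)}{2} = \frac{(n+1)(2d-n)}{2}.
\end{align*}
The only genuinely delicate step is the stabiliser identification: one must check that fixing the affine basis points forces an isometry to fix all of $A$ pointwise (immediate from affineness) and, more substantively, that the orthogonal splitting $X = V \oplus V^\perp$ legitimately realises $\stab_p \cong \Isolin(V^\perp)$ as Lie groups so the dimension count applies. An equivalent, perhaps more transparent, route to $\dim \stab_p$ is to work at the Lie-algebra level: trivial motions in $(X,\|\cdot\|_2)$ are exactly the fields $u_v = Sp_v + t$ with $S$ skew-symmetric and $t \in X$, and the kernel of $(S,t) \mapsto (Sp_v + t)_{v \in S}$ consists of those pairs with $S$ vanishing on $V$; skew-symmetry then forces $S$ to act on $V^\perp$ alone, giving kernel dimension $\tfrac{(d-n)(d-n-1)}{2}$ and the same conclusion.
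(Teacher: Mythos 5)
Your proof is correct, but note that the paper itself offers no argument to compare against: Lemma \ref{euclidtrivflex} is quoted directly from Graver--Servatius--Servatius \cite{comrig} and never proved internally. What your derivation buys is self-containedness inside the paper's own framework: applying Corollary \ref{homeo2.1} in $(X,\|\cdot\|_2)$ gives $\dim \mathcal{T}_2(p) = \dim \Iso(X,\|\cdot\|_2) - \dim \stab_p$, the stabiliser is identified (after translating $p_{v_0}$ to the origin) with $\Isolin(V^\perp)$ for the $(d-n)$-dimensional orthogonal complement of the direction space $V$ of the affine span, and both dimensions then come from part (ii) of Lemma \ref{paperisolem}; the arithmetic $\tfrac{d(d+1)}{2}-\tfrac{(d-n)(d-n-1)}{2}=\tfrac{(n+1)(2d-n)}{2}$ checks out. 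Your closing Lie-algebra argument (trivial motions $u_v = Sp_v + t$ with $S$ skew-symmetric) is essentially the classical textbook proof behind the citation, so the two routes you sketch bracket both the paper's orbit--stabiliser machinery and the source it leans on. Two points to tighten, neither a genuine gap: first, in the Lie-algebra route the kernel of $(S,t)\mapsto (Sp_v+t)_{v\in S}$ is not the set of pairs with $S|_V=0$ and $t$ unconstrained --- the translation part is forced, $t=-Sp_{v_0}$ (zero under your normalisation $p_{v_0}=0$), and the kernel is the graph of a linear map in $S$, which is why its dimension is $\tfrac{(d-n)(d-n-1)}{2}$ rather than that plus $d$; second, in the orbit--stabiliser route you should record that $\stab_p$ is a closed subgroup of $\Iso(X,\|\cdot\|_2)$ and hence an embedded Lie subgroup by the Closed Subgroup theorem (as invoked elsewhere in the paper), so that $\ker d\phi_p(\iota)=T_\iota \stab_p$ genuinely has dimension $\dim\stab_p$ and Rank--Nullity applies, and that your identification $\stab_p\cong\Isolin(V^\perp)$ is an isomorphism of Lie groups (a continuous bijective homomorphism of compact Lie groups suffices), so the dimension transfers.
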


We now wish to obtain an upper and lower bound for the dimension of the space of trivial motions for non-spanning placements. To do this we shall first find an upper-bound for when $|S|=2$ in non-Euclidean normed spaces and then use an inductive argument.

\begin{lemma}\label{orblem}
Let $x_0 \in X \setminus \{0\}$ and $\dim X =d$. Then the set
\begin{align*}
\mathcal{O}(x_0) := \{ T(x_0) : T \in \Isolin(X) \}
\end{align*}
is a closed smooth submanifold of $X$; further $\dim \mathcal{O}(x_0) =d-1$ if and only if $X$ is Euclidean.
\end{lemma}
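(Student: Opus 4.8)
The plan is to view $\mathcal{O}(x_0)$ as the orbit of $x_0$ under the compact Lie group $\Isolin(X)$ acting linearly on $X$, reduce the first assertion to the orbit theorem already available as Lemma \ref{orbits}, and then extract the Euclidean characterisation from the observation that a $(d-1)$-dimensional orbit is forced to exhaust an entire norm-sphere.

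For the manifold statement, I would first record that the linear action $(T,x)\mapsto T(x)$ of $\Isolin(X)$ on $X$ is proper: since $\Isolin(X)$ is compact (as noted in the preliminaries), for any compact $C\subseteq X\times X$ the preimage under $\theta:(T,x)\mapsto(T(x),x)$ is a closed subset of $\Isolin(X)\times\pi_2(C)$, a product of compacta, hence compact. Applying Lemma \ref{orbits} with $\Gamma=\Isolin(X)$ and $M=X$ then yields at once that $\mathcal{O}(x_0)$ is a closed smooth submanifold of $X$, diffeomorphic to $\Isolin(X)/\stab_{x_0}$. The reverse implication of the dimension claim is the easy one: if $X$ is Euclidean then $\Isolin(X)$ is the full orthogonal group, which acts transitively on the sphere $S_{\|x_0\|}[0]$, so $\mathcal{O}(x_0)=S_{\|x_0\|}[0]$ is a $(d-1)$-dimensional manifold.

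The substantive direction is ``$\dim\mathcal{O}(x_0)=d-1\Rightarrow X$ Euclidean'', and here I would assume $d\geq 2$ (the case $d=1$ being trivial, since every $1$-dimensional space is Euclidean and $\mathcal{O}(x_0)=\{\pm x_0\}$ has dimension $0=d-1$). Writing $r=\|x_0\|$, every $T\in\Isolin(X)$ preserves the norm, so $\mathcal{O}(x_0)\subseteq S_r[0]$; radial projection shows $S_r[0]$ is homeomorphic to the standard $(d-1)$-sphere, hence a connected topological $(d-1)$-manifold. Under the hypothesis $\dim\mathcal{O}(x_0)=d-1$, the inclusion $\mathcal{O}(x_0)\hookrightarrow S_r[0]$ is a continuous injection between manifolds of equal dimension, so by invariance of domain it is open; thus $\mathcal{O}(x_0)$ is open in $S_r[0]$, while being compact (the continuous image of the compact group $\Isolin(X)$) it is also closed. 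Connectedness of $S_r[0]$ then forces $\mathcal{O}(x_0)=S_r[0]$, i.e.\ $\Isolin(X)$ acts transitively on the whole sphere.

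To finish I would invoke the canonical Euclidean structure of Lemma \ref{paperisolem}(i): there is a Euclidean norm $\|\cdot\|_2$ with $\Isolin(X)\subseteq\Isolin(X,\|\cdot\|_2)$, so $\|\cdot\|_2$ is constant on each $\Isolin(X)$-orbit and therefore takes the single value $\|x_0\|_2$ on all of $S_r[0]$. Consequently the $\|\cdot\|$-sphere $S_r[0]$ coincides as a set with a $\|\cdot\|_2$-sphere; since equal boundaries of two convex bodies containing the origin in their interiors force the bodies to agree, the unit balls of $\|\cdot\|$ and $\|\cdot\|_2$ are equal up to scaling, so $\|\cdot\|$ is generated by an inner product and $X$ is Euclidean. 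The main obstacle is precisely this forward direction: the dimension bounds of Lemma \ref{paperisolem} are too coarse (they already permit $\dim\Isolin(X)=d-1$ when $d=2$), so the argument genuinely relies on upgrading the local dimension equality to the global statement that the orbit fills the sphere, which is where invariance of domain and the connectedness of $S_r[0]$ do the essential work, and then on transferring transitivity into a norm identity via the invariant Euclidean structure.
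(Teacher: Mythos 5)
Your proof is correct, and its skeleton matches the paper's own argument almost step for step: compactness of $\Isolin(X)$ gives a proper Lie group action, so Lemma \ref{orbits} yields that $\mathcal{O}(x_0)$ is a closed smooth submanifold; transitivity of the orthogonal group on spheres handles the Euclidean direction; and for the hard direction both you and the paper use Brouwer's invariance of domain to see that a $(d-1)$-dimensional orbit is open in $S_{\|x_0\|}[0]$, then combine this with closedness and connectedness of the sphere to conclude that $\Isolin(X)$ acts transitively on $S_{\|x_0\|}[0]$. The one place you genuinely diverge is the final step, converting transitivity into ``$X$ is Euclidean'': the paper closes by citing the classical result \cite[Corollary 3.3.5]{minkowski}, whereas you derive it from Lemma \ref{paperisolem}(i) — since $\Isolin(X,\|\cdot\|)\subseteq \Isolin(X,\|\cdot\|_2)$, the invariant Euclidean norm $\|\cdot\|_2$ is constant on the orbit, hence on the whole sphere $S_{\|x_0\|}[0]$, which forces $\|\cdot\|$ to be proportional to $\|\cdot\|_2$. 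This is a nice trade: it makes the lemma self-contained modulo results the paper already imports (Lemma \ref{paperisolem}(i) rests on \cite[Corollary 3.3.4]{minkowski}), instead of invoking a second, independent characterisation of inner-product spaces.

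One small point to tighten in that last step: what your argument actually establishes is the containment $S_{\|x_0\|}[0]\subseteq\{x\in X:\|x\|_2=\|x_0\|_2\}$, not the set equality of the two spheres, so the appeal to ``equal boundaries of convex bodies force equal bodies'' is not justified as written. But the containment already suffices and is easier: for any $x\neq 0$, the rescaled point $y=\|x_0\|\,x/\|x\|$ lies in $S_{\|x_0\|}[0]$, so $\|y\|_2=\|x_0\|_2$, which by homogeneity gives $\|x\|_2=\bigl(\|x_0\|_2/\|x_0\|\bigr)\|x\|$ for all $x$. Hence the two norms are proportional and $\|\cdot\|$ is generated by a (rescaled) inner product. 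With that adjustment your proof is complete.
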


\begin{proof}
Since $\Isolin (X)$ is compact then $\Isolin (X)$ gives rise to a proper Lie group action on $X$ by $x \mapsto T(x)$ for all $T \in \Isolin(X)$, $x \in X$. As $\mathcal{O}(x_0)$ is the orbit of $x_0$ (with respect to $\Isolin (X)$) then by Lemma \ref{orbits}, $\mathcal{O}(x_0)$ is a closed smooth submanifold of $X$. 

First suppose $X$ is Euclidean. By \cite[Corollary 3.3.3]{minkowski} $\Isolin (X)$ acts transitively on $S_{\|x_0\|}[0]$, thus $\mathcal{O}(x_0) = S_{\|x_0\|}[0]$. As the unit sphere of a Euclidean space is the $d$-sphere and $S_{\|x_0\|}[0] = \|x_0\|S_{1}[0]$ we have $\dim \mathcal{O}(x_0) =d-1$.

Now suppose $\dim \mathcal{O}(x_0) =d-1$. If $d=1$ the result is immediate so assume $d >1$. The set $S_{\|x_0\|}[0]$ is a closed connected topological submanifold of $X$ with dimension $d-1$ as it is homeomorphic to the $d$-sphere. Since $\mathcal{O}(x_0) \subset S_{\|x_0\|}[0]$ then $\mathcal{O}(x_0)$ is a closed subset of $S_{\|x_0\|}[0]$. As $\dim \mathcal{O}(x_0) = \dim S_{\|x_0\|}[0]$ it follows from Brouwer's theorem for invariance of domain \cite[Theorem 1.18]{manifoldlee} that the inclusion map $\mathcal{O}(x_0) \hookrightarrow S_{\|x_0\|}[0]$ is an open map and thus $\mathcal{O}(x_0)$ is an open subset of $S_{\|x_0\|}[0]$. As the only clopen non-empty subset of $S_{\|x_0\|}[0]$ is itself we have $\mathcal{O}(x_0) = S_{\|x_0\|}[0]$. This implies $\Isolin (X)$ acts transitively on $S_{\|x_0\|}[0]$, thus by \cite[Corollary 3.3.5]{minkowski} $X$ is Euclidean.
\end{proof}

\begin{lemma}\label{k2flex}
Let $(p,S)$ be a general positioned placement in a $d$-dimensional space $X$ where $|S|=2$. Then $\dim \mathcal{T}(p) \leq 2d-1$ with equality if and only if $X$ is Euclidean.
\end{lemma}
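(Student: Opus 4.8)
The plan is to compute $\dim \mathcal{T}(p)$ exactly by identifying the orbit $\mathcal{O}_p$ with a product involving the linear-isometry orbit studied in Lemma \ref{orblem}, and then to read off both the bound and the equality case directly from that lemma. By Theorem \ref{homeo2} the orbit $\mathcal{O}_p$ is a smooth manifold whose tangent space at $p$ is $\mathcal{T}(p)$, so $\dim \mathcal{T}(p) = \dim \mathcal{O}_p$; it therefore suffices to determine $\dim \mathcal{O}_p$.

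First I would set $w := p_{v_2} - p_{v_1}$, noting $w \neq 0$ since general position with $|S| = 2$ forces the two placement points to be distinct. By Mazur-Ulam's theorem every isometry $g$ decomposes uniquely as $g = T_x \circ G$ with $G \in \Isolin(X)$ and $T_x$ the translation by $x$, so
\begin{align*}
g.p = (G(p_{v_1}) + x, \; G(p_{v_2}) + x).
\end{align*}
The difference of the two coordinates is $G(p_{v_2}) - G(p_{v_1}) = G(w) \in \mathcal{O}(w)$, while the first coordinate can be made arbitrary by a suitable choice of $x$. This motivates the map
\begin{align*}
\Psi : X \times \mathcal{O}(w) \to \mathcal{O}_p, \quad (y, z) \mapsto (y, y + z),
\end{align*}
which I would check is a well-defined bijection: surjectivity follows from the computation above, injectivity is clear, and for a given $(y,z)$ with $z = G(w)$ one recovers the isometry $g = T_{y - G(p_{v_1})} \circ G$. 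Since $\Psi$ is the restriction of the linear automorphism $(a,b) \mapsto (a, a+b)$ of $X \times X$, it is a smooth diffeomorphism onto $\mathcal{O}_p$, yielding $\dim \mathcal{O}_p = d + \dim \mathcal{O}(w)$.

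With this in hand the conclusion is immediate from Lemma \ref{orblem}: since $\mathcal{O}(w)$ is a smooth submanifold of $X$ contained in the sphere $S_{\|w\|}[0]$, which has empty interior, it cannot be $d$-dimensional and so $\dim \mathcal{O}(w) \leq d-1$; hence $\dim \mathcal{T}(p) = d + \dim \mathcal{O}(w) \leq 2d - 1$. Moreover equality holds precisely when $\dim \mathcal{O}(w) = d - 1$, which by Lemma \ref{orblem} occurs if and only if $X$ is Euclidean. I expect the one point needing genuine care to be the verification that $\Psi$ restricts to a diffeomorphism of the submanifolds $X \times \mathcal{O}(w)$ and $\mathcal{O}_p$ (as opposed to a mere continuous bijection), since this is what legitimises adding the dimensions; the remainder is bookkeeping. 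An alternative that sidesteps the global diffeomorphism would be to argue infinitesimally, computing $d\phi_p(\iota)(A,b) = (A p_{v_1} + b, \, A p_{v_2} + b)$ and observing that under $(u_1,u_2) \mapsto (u_1, u_2 - u_1)$ the image $\mathcal{T}(p)$ corresponds to $X \times T_w \mathcal{O}(w)$, giving the same dimension count.
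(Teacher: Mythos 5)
Your proof is correct, and it is genuinely different in structure from the one in the paper, even though both ultimately rest on Lemma \ref{orblem}. The paper argues in two halves with different tools: the bound $\dim \mathcal{T}(p) \leq 2d-1$, together with the claim that equality holds when $X$ is Euclidean, comes from the comparison $\mathcal{T}(p) \subseteq \mathcal{T}_2(p)$ (Lemma \ref{papertriv}) and the Euclidean count of Lemma \ref{euclidtrivflex}; only the remaining direction (equality forces $X$ Euclidean) uses the linear-isometry orbit, and it does so infinitesimally --- after normalising $p_{v_1}=0$, the tangent space of $\mathcal{O}(p_{v_2})$ at $p_{v_2}$ is identified with $\{u \in X : (0,u) \in \mathcal{T}(p)\}$ and a dimension count hands the problem to Lemma \ref{orblem}. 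You instead establish the global decomposition $\mathcal{O}_p = L(X \times \mathcal{O}(w))$, $w = p_{v_2}-p_{v_1}$, where $L:(a,b) \mapsto (a,a+b)$, obtaining the exact formula $\dim \mathcal{T}(p) = d + \dim \mathcal{O}(w)$, from which the bound and both directions of the equality case follow at once from Lemma \ref{orblem} plus the observation that a submanifold of the sphere cannot be $d$-dimensional. This buys self-containedness (no appeal to $\mathcal{T}_2$, Lemma \ref{papertriv} or Lemma \ref{euclidtrivflex}) and a sharper conclusion (an exact dimension formula rather than an inequality); the paper's route is shorter in context because the $\mathcal{T}_2$-comparison machinery is needed anyway for Lemma \ref{quotientlemma} and Proposition \ref{flextheorem2}. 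The one point you flag as delicate is indeed fine: $L$ is a linear automorphism of $X^{S}$, so it carries the embedded submanifold $X \times \mathcal{O}(w)$ (a product of embedded submanifolds, by Lemma \ref{orblem}) diffeomorphically onto its image; since that image coincides as a set with $\mathcal{O}_p$, which is an embedded submanifold by Lemma \ref{homeo2.0}, and embedded submanifold structures on a fixed subset are unique, the dimensions agree, and Theorem \ref{homeo2} converts $\dim \mathcal{O}_p$ into $\dim \mathcal{T}(p)$.
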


\begin{proof}
By Lemma \ref{papertriv} and Lemma \ref{euclidtrivflex} it follows $\dim \mathcal{T}(p) \leq 2d-1$ and $\dim \mathcal{T}(p) = 2d-1$ if $X$ is Euclidean and $p$ is in general position. 

Now suppose $\dim \mathcal{T}(p) = 2d-1$. Let $S=\{v_1,v_2\}$, then without loss of generality we may assume $p_{v_1}=0$. If $d=1$ the result holds trivially so we may suppose $d>1$. As $p$ is in general position then $p_{v_2} \neq 0$. By Lemma \ref{orblem}, $\mathcal{O}(p_{v_2})$ is a closed smooth submanifold of $X$. It follows from \cite[Lemma 3.3.4]{manifold} that we may identify the tangent space of $\mathcal{O}(p_{v_2})$ at $p_{v_2}$ with a subspace of $X$; by calculation we see that the tangent space of $\mathcal{O}(p_{v_2})$ at $p_{v_2}$ is $\{ u \in X : (0,u) \in \mathcal{T}(p) \}$. Since $\dim \mathcal{T}(p) =d + (d-1)$ and the trivial motions generated by translations form a $d$-dimensional subspace then it follows $\mathcal{O}(p_{v_2})$ has dimension $d-1$. By Lemma \ref{orblem} the space $X$ is Euclidean as required.
\end{proof}

\begin{lemma}\label{quotientlemma}
Let $(p,S)$ be a placement in a $d$-dimensional space $ X$ and $\{p_v :v \in S\}$ have an affine span of dimension $k-1$ where $2 \leq k \leq d$. Suppose $(p',S')$ is a placement where $(p,S) \subset (p',S')$ and the dimension of the affine span of $\{p'_v :v \in S'\}$ is $k$, then
\begin{align*}
\dim \mathcal{T}(p') - \dim \mathcal{T}(p) \leq d-k.
\end{align*}
\end{lemma}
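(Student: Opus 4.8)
The plan is to reduce the difference $\dim \mathcal{T}(p') - \dim \mathcal{T}(p)$ to the dimension of a single kernel, and then to bound that kernel by comparison with the associated Euclidean space. First I would introduce the coordinate restriction $\rho : \mathcal{O}_{p'} \to \mathcal{O}_p$, $(x_v)_{v \in S'} \mapsto (x_v)_{v \in S}$; since $(p,S) \subset (p',S')$ we have $g.p = (g.p')|_S$ for every $g \in \Iso(X)$, so $\rho$ is well defined (it is the restriction of a linear projection, hence smooth) and satisfies $\rho \circ \phi_{p'} = \phi_p$. By Theorem \ref{homeo2} the orbits $\mathcal{O}_{p'}$ and $\mathcal{O}_p$ are smooth manifolds with tangent spaces $\mathcal{T}(p')$ and $\mathcal{T}(p)$ at $p'$ and $p$ respectively, and $d\rho(p')$ is simply the coordinate restriction $u \mapsto u|_S$. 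Differentiating $\rho \circ \phi_{p'} = \phi_p$ at $\iota$ gives $d\rho(p') \circ d\phi_{p'}(\iota) = d\phi_p(\iota)$; as the right hand side is surjective onto $\mathcal{T}(p)$ by Corollary \ref{homeo2.1}, the map $d\rho(p')|_{\mathcal{T}(p')} : \mathcal{T}(p') \to \mathcal{T}(p)$ is surjective with kernel $K := \{u \in \mathcal{T}(p') : u|_S = 0\}$. The rank--nullity theorem then yields
\begin{align*}
\dim \mathcal{T}(p') - \dim \mathcal{T}(p) = \dim K.
\end{align*}

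It now suffices to show $\dim K \leq d-k$. Here I would pass to the unique Euclidean space $(X, \|\cdot\|_2)$ of Lemma \ref{paperisolem}. The construction above is purely formal and applies verbatim to the action of $\Iso(X, \|\cdot\|_2)$, so the same rank--nullity identity gives $\dim \mathcal{T}_2(p') - \dim \mathcal{T}_2(p) = \dim K_2$, where $K_2 := \{u \in \mathcal{T}_2(p') : u|_S = 0\}$. The affine spans of $p$ and $p'$ have dimensions $k-1$ and $k$, so Lemma \ref{euclidtrivflex} gives
\begin{align*}
\dim K_2 = \frac{(k+1)(2d-k)}{2} - \frac{k(2d-k+1)}{2} = d-k.
\end{align*}
Finally, Lemma \ref{papertriv} gives $\mathcal{T}(p') \subseteq \mathcal{T}_2(p')$, whence $K \subseteq K_2$ and $\dim K \leq \dim K_2 = d-k$. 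Combining this with the displayed identity proves the claim.

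The routine parts are the smoothness of $\rho$ and the identification of $d\rho(p')$ with the coordinate restriction, which follow because $\rho$ is the restriction of a linear projection to embedded submanifolds. The conceptual heart of the argument, and the step I expect to require the most care, is the observation that $\dim \mathcal{T}(p') - \dim \mathcal{T}(p)$ is exactly the dimension of the space of trivial motions of $p'$ that freeze the vertices of $S$; once this is isolated, the inclusion $\mathcal{T}(p') \subseteq \mathcal{T}_2(p')$ lets the exact Euclidean count of Lemma \ref{euclidtrivflex} serve as the upper bound, with equality in the Euclidean case confirming that $d-k$ is sharp. The main things to verify carefully are that the Euclidean formula is applied with the correct affine-span dimensions $k-1$ and $k$, and that the surjectivity argument for $d\rho$ does not secretly require the placements to be finite; both reduce to results already in hand (Theorem \ref{homeo2} and Corollary \ref{homeo2.1}), which are stated for arbitrary placements.
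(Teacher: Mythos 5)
Your proof is correct and takes essentially the same approach as the paper's: both express $\dim \mathcal{T}(p') - \dim \mathcal{T}(p)$ as the dimension of the kernel of a coordinate-restriction map between trivial-motion spaces (with surjectivity onto $\mathcal{T}(p)$ coming from Corollary \ref{homeo2.1}), and then bound that kernel by its Euclidean counterpart using Lemma \ref{papertriv} together with the exact count of Lemma \ref{euclidtrivflex}. The only difference is organisational: the paper first passes to finite subplacements $(q,T)$, $(q',T')$ and transfers back via part \ref{diffeo2item2} of Corollary \ref{diffeo2}, whereas you apply the restriction map directly to the orbits $\mathcal{O}_{p'}$ and $\mathcal{O}_p$, which is legitimate since Theorem \ref{homeo2} and Corollary \ref{homeo2.1} are stated for arbitrary placements.
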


\begin{proof}
Choose $T :=\{v_0, \ldots, v_{k-1}\}$ and $T' := T \cup \{v_{k}\}$ where $v_i \in S$ for $i \in \{0, \ldots ,k-1\}$, $v_{k} \in S'$ and $p'_{v_0}, \ldots, p'_{v_{k}}$ have affine span with dimension $k$. Define the linear restriction map 
\begin{align*}
P: X^{T'} \rightarrow X^{T}, ~ (x_{v_i})_{i=0}^{k} \mapsto (x_{v_i})_{i=0}^{k-1}
\end{align*}
and the placements $(q,T) \subset (p,S)$ and $(q',T') \subset (p',S')$. We note that $P(\mathcal{T}_2(q')) \subseteq \mathcal{T}_2(q)$ and $P(\mathcal{T}(q')) \subseteq \mathcal{T}(q)$. Choose any $u \in \mathcal{T}(q)$, then by Corollary \ref{homeo2.1} there exists $h \in T_\iota \Iso (X)$ such that $u = h.q$. By Corollary \ref{homeo2.1}, $h.q' \in \mathcal{T}(q')$, thus as $P(h.q')= h.q$ we have $P(\mathcal{T}(q')) = \mathcal{T}(q)$. By a similar argument we can see that $P(\mathcal{T}_2(q')) = \mathcal{T}_2(q)$ also.

By the Rank-Nullity theorem applied to $P|_{\mathcal{T}_2(q')}$ and Lemma \ref{euclidtrivflex}
\begin{align*}
\dim \ker P|_{\mathcal{T}_2(q')} = \dim \mathcal{T}_2(q') - \dim \mathcal{T}_2(q) = \frac{(k+1)(2d-k)}{2} - \frac{k(2d-k+1)}{2} = d-k.
\end{align*}
By Lemma \ref{papertriv} and the Rank-Nullity theorem applied to $P|_{\mathcal{T}(q')}$
\begin{align*}
\dim \mathcal{T}(q') - \dim \mathcal{T}(q) = \dim \ker P|_{\mathcal{T}(q')} \leq \dim \ker P|_{\mathcal{T}_2(q')} =d-k.
\end{align*}
By part \ref{diffeo2item2} of Corollary \ref{diffeo2}, $\mathcal{T}(q) \cong \mathcal{T}(p)$ and $\mathcal{T}(q') \cong \mathcal{T}(p')$ and so the result follows.
\end{proof}

\begin{proposition}\label{flextheorem2}
Let $(p,S)$ be a placement in a $d$-dimensional normed space $X$ where $\{p_v :v \in S \}$ has an affine span of dimension $1 \leq n \leq d$. Then
\begin{align*}
\dim \mathcal{T}(p) \leq \frac{(n+1)(2d - n)}{2} 
\end{align*}
with equality if and only if $X$ is Euclidean.
\end{proposition}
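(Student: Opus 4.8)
The plan is to separate the inequality from the equality statement, with the forward direction of the equality (that equality forces $X$ Euclidean) carrying essentially all of the content. For the inequality I would combine Lemma~\ref{papertriv}, which gives the inclusion $\mathcal{T}(p) \subseteq \mathcal{T}_2(p)$, with Lemma~\ref{euclidtrivflex}, which evaluates $\dim \mathcal{T}_2(p) = \frac{(n+1)(2d-n)}{2}$; since a subspace has dimension at most that of its ambient space, the bound is immediate. For the implication ``$X$ Euclidean $\Rightarrow$ equality'', I would observe that when $\|\cdot\| = \|\cdot\|_2$ the two isometry groups coincide, whence $\mathcal{T}(p) = \mathcal{T}_2(p)$ and equality holds at once.

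The remaining direction, that equality forces $X$ to be Euclidean, I would prove by a telescoping argument on the affine dimension. Using part~\ref{diffeo2item2} of Corollary~\ref{diffeo2}, I would first reduce to a placement supported on $n+1$ affinely independent points $p_{v_0}, \ldots, p_{v_n}$ (these exist since the affine span has dimension $n$), as $\dim \mathcal{T}$ depends only on the affine span. I would then form the nested subplacements $p^{(1)} \subset p^{(2)} \subset \cdots \subset p^{(n)}$, where $p^{(k)}$ places $\{v_0, \ldots, v_k\}$ and hence has affine span of dimension exactly $k$. Setting $t_k := \dim \mathcal{T}(p^{(k)})$, I would invoke the telescoping identity $t_n = t_1 + \sum_{k=2}^n (t_k - t_{k-1})$ together with the increment bound $t_k - t_{k-1} \leq d-k$ from Lemma~\ref{quotientlemma} (valid since $2 \leq k \leq n \leq d$) and the base bound $t_1 \leq 2d-1$ from Lemma~\ref{k2flex}.

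The decisive point is that these upper bounds sum exactly to the target value: $(2d-1) + \sum_{k=2}^n (d-k) = \frac{(n+1)(2d-n)}{2}$. Under the equality hypothesis we have $t_n = \frac{(n+1)(2d-n)}{2}$ after the reduction, so every one of the bounds in the telescope must be attained; in particular the base case is sharp, $t_1 = 2d-1$. The equality clause of Lemma~\ref{k2flex} then forces $X$ to be Euclidean, which is exactly the assertion. (For $n=1$ no telescoping is needed, since the claim is then already Lemma~\ref{k2flex}.)

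I expect the difficulty here to be organisational rather than conceptual. The points to verify carefully are that the chain can be chosen with each $p^{(k)}$ of affine span precisely $k$ (immediate from affine independence), that Lemma~\ref{quotientlemma} applies at every step, and that the telescoped upper bounds collapse to $\frac{(n+1)(2d-n)}{2}$. The one genuine idea is that sharpness propagates from the top of the tower down to the two-point base, which is the unique place where the Euclidean characterisation—ultimately Lemma~\ref{orblem}—is invoked.
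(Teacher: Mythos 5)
Your proposal is correct and takes essentially the same route as the paper: the paper proves the result by induction on $n$, showing that for non-Euclidean $X$ the inequality is \emph{strict} (base case Lemma \ref{k2flex} via Corollary \ref{diffeo2}, inductive step Lemma \ref{quotientlemma}), which is precisely your telescoping-with-sharpness-propagation argument read contrapositively, using the same chain of subplacements and the same two key lemmas in the same roles. The only cosmetic difference is that you establish the non-strict bound separately from the inclusion $\mathcal{T}(p) \subseteq \mathcal{T}_2(p)$ of Lemma \ref{papertriv} together with Lemma \ref{euclidtrivflex}, whereas the paper obtains it as a byproduct of its Euclidean/non-Euclidean case split.
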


\begin{proof}
If $X$ is Euclidean then the result follows by Lemma \ref{euclidtrivflex}.

Suppose $X$ is non-Euclidean. If $n=1$ then the result follows by Lemma \ref{k2flex} and part \ref{diffeo2item2} of Corollary \ref{diffeo2}. Let $n >1$ and suppose the theorem holds for all $m= 1, \ldots, n-1$. Choose $v_1, \ldots, v_n \in S$ so that $p_{v_1}, \ldots, p_{v_n}$ are affinely independent and define $T:= \{v_1, \ldots, v_n \}$. If we define $q := p|_T$ then $\{q_v : v = v_1, \ldots, v_n\}$ has an affine span with dimension $n-1$, thus by assumption $\dim \mathcal{T}(q) < \frac{n(2d - n+1)}{2}$. By Lemma \ref{quotientlemma} it follows that 
\begin{align*}
\dim \mathcal{T}(p) \leq \dim \mathcal{T}(q) +d -n < \frac{n(2d - n+1)}{2} +d-n = \frac{(n+1)(2d - n)}{2}.
\end{align*}
\end{proof}

We define a framework $(G,p)$ in a $d$-dimensional normed space to be \textit{small} if $|V(G)| \leq d +1$. The following is a well known result for Euclidean spaces.

\begin{proposition}\label{paperbarsimplex}%\cite[Proposition 2.30]{connellybook}
Let $X$ be $d$-dimensional Euclidean space and $(G,p)$ be a small well-positioned framework, then $(G,p)$ is isostatic if and only if $G$ is a complete graph and $p$ is in general position.
\end{proposition}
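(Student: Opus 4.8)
The plan is to combine the dimension formula for the trivial motion space (Lemma \ref{euclidtrivflex}) with the edge-counting characterisation of isostaticity (Proposition \ref{isostatic}), reducing everything to elementary linear algebra about self-stresses. Throughout write $k := |V(G)|$ and let $n$ denote the dimension of the affine span of $\{p_v : v \in V(G)\}$. Since $X$ is Euclidean we have $\mathcal{T}(p) = \mathcal{T}_2(p)$, so Lemma \ref{euclidtrivflex} gives $\dim \mathcal{T}(p) = \tfrac{(n+1)(2d-n)}{2}$. Because the framework is small, $k \le d+1$, whence $n \le k-1 \le d$; note that for a small framework $p$ is in general position precisely when $n = k-1$, i.e.\ when all $k$ points are affinely independent.

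For the \textbf{reverse implication} ($G = K_k$ and $p$ in general position $\Rightarrow$ isostatic), I would first substitute $n = k-1$ to obtain $\dim\mathcal{T}(p) = \tfrac{k(2d-k+1)}{2}$ and verify the arithmetic identity $dk - \dim\mathcal{T}(p) = \binom{k}{2} = |E(K_k)|$, which is condition (i) of Proposition \ref{isostatic}. It then suffices to establish one further condition, and I would prove independence by ruling out a nonzero self-stress: after absorbing the positive scalars $1/\|p_v-p_w\|$ into the coefficients, a self-stress $(\omega_{vw})$ satisfies $\sum_{w \neq v}\omega_{vw}(p_v - p_w) = 0$ at each vertex $v$, and since the $k-1$ vectors $\{p_v - p_w\}_{w\neq v}$ are linearly independent in $\mathbb{R}^d$ by affine independence (this is exactly where $k-1 \le d$ is used), all $\omega_{vw}$ vanish. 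With condition (i) and independence in hand, Proposition \ref{isostatic} delivers isostaticity.

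For the \textbf{forward implication} ($(G,p)$ isostatic $\Rightarrow$ complete and in general position), isostaticity yields condition (i), so $|E(G)| = dk - \tfrac{(n+1)(2d-n)}{2}$, while trivially $|E(G)| \le \binom{k}{2}$. I would read the resulting inequality $dk - \tfrac{(n+1)(2d-n)}{2} \le \binom{k}{2}$ as a quadratic in $n$; its discriminant is the perfect square $(2d-2k+1)^2$, so the inequality is equivalent to $k-1 \le n \le 2d-k$. Intersecting with $n \le k-1$ forces $n = k-1$ whenever $k \le d$, and then equality $|E(G)| = \binom{k}{2}$ forces $G = K_k$, with general position following from $n = k-1$.

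The main obstacle is the boundary case $k = d+1$, where the quadratic admits both $n = d$ and $n = d-1$ (these give the same value $\tfrac{d(d+1)}{2}$ of $\dim\mathcal{T}(p)$), so counting alone cannot exclude a degenerate placement of $d+1$ points in a hyperplane. In both subcases $|E(G)| = \binom{d+1}{2}$ still forces $G = K_{d+1}$, and I would eliminate $n = d-1$ directly: such a placement admits an affine dependence $\sum_i \lambda_i p_i = 0$ with $\sum_i \lambda_i = 0$ and $\lambda \neq 0$ (possible exactly when $n \le d-1$), and then $\omega_{ij} := \lambda_i\lambda_j$ is a self-stress, since $\sum_{j\neq i}\lambda_i\lambda_j(p_i - p_j) = \lambda_i\bigl(p_i\sum_{j\neq i}\lambda_j - \sum_{j\neq i}\lambda_j p_j\bigr) = \lambda_i(-\lambda_i p_i + \lambda_i p_i) = 0$ using the two relations. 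This stress is nonzero because $\lambda$ has at least two nonzero entries, contradicting the independence of an isostatic framework. Hence $n = d$ and $p$ is in general position, completing the argument.
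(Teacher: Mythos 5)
Your proof is correct, but it takes a genuinely different route from the paper's in both directions. For the reverse implication the paper simply cites the classical fact that Euclidean simplices are isostatic \cite[Theorem 2.4.1.d]{comrig}, whereas you reprove it from scratch via Proposition \ref{isostatic}: the count $dk - \dim \mathcal{T}(p) = \binom{k}{2}$ together with the observation that a self-stress (a nonzero element of the left kernel of $df_G(p)$, i.e.\ exactly a failure of the paper's notion of independence) must vanish because at each vertex the $k-1$ edge vectors $p_v - p_w$ are linearly independent. For the forward implication the paper also argues by counting to get completeness, though rather tersely: it asserts $|E(G)| = \binom{k}{2}$ directly from Proposition \ref{isostatic} and Lemma \ref{euclidtrivflex}, which really requires knowing that $n \mapsto \frac{(n+1)(2d-n)}{2}$ is increasing on $[0,d-1]$ and takes equal values at $n=d-1$ and $n=d$ --- precisely the analysis your quadratic (with discriminant $(2d-2k+1)^2$ and roots $n = k-1$, $n = 2d-k$, which I checked) makes explicit. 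The paper then rules out non-general position on the \emph{flex} side: it constructs a nonzero infinitesimal flex moving a single vertex orthogonally to the affine span of all the points and uses Corollary \ref{homeo2.1} to show no tangent vector to $\Iso(X)$ can realise it, contradicting infinitesimal rigidity. You instead rule out degeneracy on the \emph{stress} side, with the rank-one stress $\omega_{ij} = \lambda_i \lambda_j$ built from an affine dependence, contradicting independence; both of your stress computations are sound. The two arguments are dual (kernel versus cokernel of $df_G(p)$): the paper's flex argument treats all $k \le d+1$ uniformly with no case split at $k = d+1$, while yours is fully self-contained, avoids the external citation, and makes the edge-counting step airtight.
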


\begin{proof}
($\Rightarrow$): If $(G,p)$ is isostatic it follows by Proposition \ref{isostatic} and Lemma \ref{euclidtrivflex} that $|E(G)| = \frac{|V(G)|(|V(G)|-1)}{2}$, thus $G$ is a complete graph. 

Suppose $p$ is not in general position, then there exists distinct vertices $v_0, \ldots, v_n, w \in V(G)$ such that the affine span of $p_{v_0}, \ldots, p_{v_n}$ is $Y \subset X$, the affine span of $\{p_v :v \in V(G)\}$. Define for each $v \in V(G)$ the vector $u_v$ where $u_v=0$ if $v \neq w$ and $u_w \in (Y - p_{v_0})^\perp \setminus \{0\}$, then $u := (u_v)_{v \in V(G)} \in X^{V(G)}$ is a infinitesimal flex of $(G,p)$. If $u$ is trivial then by Corollary \ref{homeo2.1} there exists an affine map $g \in T_{\iota} \Iso (X)$ such that $g.p = u$. As $p_{v_0}, \ldots, p_{v_n}$ is an affine basis of $Y$ and $g(p_{v_i})=0$ for all $i=0,1, \ldots,n$ then $g(y)=0$ for all $y \in Y$. However $g(p_w) \neq 0$ and so no such affine map $g$ may exist. This implies $u$ is a non-trivial flex which contradicts the infinitesimal rigidity of $(G,p)$.

($\Leftarrow$): This follows from \cite[Theorem 2.4.1.d]{comrig}.
\end{proof}

Using Theorem \ref{flextheorem} we can now state our own result for small frameworks for non-Euclidean spaces.

\begin{theorem}\label{smallflexible}
Let $(G,p)$ be a small well-positioned framework with in a $d$-dimensional non-Euclidean normed space $X$, then $(G,p)$ is infinitesimally flexible or $|V(G)|=1$.
\end{theorem}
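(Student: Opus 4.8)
The plan is to argue by contradiction: I assume $(G,p)$ is infinitesimally rigid with $m := |V(G)| \geq 2$ and derive that $X$ must be Euclidean, contradicting the hypothesis. The engine of the argument is a Maxwell-type edge count played off against the \emph{strict} upper bound for $\dim \mathcal{T}(p)$ furnished by Proposition \ref{flextheorem2} in the non-Euclidean case; the hypothesis that $(G,p)$ is small is precisely what makes the two bounds meet.

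First I would record the elementary geometric constraints. Let $n$ be the dimension of the affine span of $\{p_v : v \in V(G)\}$. Any set of $m$ points spans an affine subspace of dimension at most $m-1$, and smallness gives $m \leq d+1$, so $n \leq m-1 \leq d$; this is exactly the range $1 \leq n \leq d$ needed to apply Proposition \ref{flextheorem2}. I would dispose of the degenerate case $n = 0$ at once: if all placement points coincide then $G$ admits no well-positioned edge, so $\mathcal{F}(G,p) = X^{V(G)}$ has dimension $dm \geq 2d$, while $\dim \mathcal{T}(p) \leq \dim \mathcal{T}_2(p) = d$ by Lemma \ref{papertriv} and Lemma \ref{euclidtrivflex}; hence $\mathcal{T}(p) \subsetneq \mathcal{F}(G,p)$ and $(G,p)$ is already infinitesimally flexible, a contradiction. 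So we may assume $n \geq 1$.

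Next I would assemble the two competing inequalities. On one side, infinitesimal rigidity feeds Theorem \ref{maxwellpaper}(ii), giving $|E(G)| \geq dm - \dim \mathcal{T}(p)$; since $G$ is simple, $|E(G)| \leq \binom{m}{2} = \tfrac{m(m-1)}{2}$, and therefore
\begin{align*}
\dim \mathcal{T}(p) \;\geq\; dm - \frac{m(m-1)}{2}.
\end{align*}
On the other side, since $X$ is non-Euclidean, Proposition \ref{flextheorem2} yields the strict bound $\dim \mathcal{T}(p) < f(n)$, where I write $f(n) := \tfrac{(n+1)(2d-n)}{2}$.

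The crux is to show these bounds are incompatible. I would compute the discrete difference $f(n+1) - f(n) = d - n - 1$, so that $f$ is non-decreasing for $n \leq d-1$ and satisfies $f(d-1) = f(d)$; consequently $f(n) \leq f(m-1)$ for every $n$ with $n \leq m-1 \leq d$. Combining this with the boundary identity $f(m-1) = dm - \tfrac{m(m-1)}{2}$ produces the chain
\begin{align*}
dm - \frac{m(m-1)}{2} \;\leq\; \dim \mathcal{T}(p) \;<\; f(n) \;\leq\; f(m-1) \;=\; dm - \frac{m(m-1)}{2},
\end{align*}
which is absurd, forcing $(G,p)$ to be infinitesimally flexible whenever $m \geq 2$. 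I expect the only real obstacle to be bookkeeping rather than ideas: one must verify the monotonicity of $f$ on $\{0,\dots,d\}$ together with the exact matching $f(m-1) = dm - \binom{m}{2}$, so that the single strict inequality coming from non-Euclideanness is precisely what collapses the count. Once these elementary facts are checked, the contradiction is immediate.
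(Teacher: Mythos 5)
Your proposal is correct and follows essentially the same route as the paper: both play Theorem \ref{maxwellpaper}(ii) and the simple-graph bound $|E(G)| \leq \binom{|V(G)|}{2}$ against the strict non-Euclidean inequality of Proposition \ref{flextheorem2}, using the monotonicity of $f(x) = \tfrac{(x+1)(2d-x)}{2}$ on $[0,d-1]$, the identity $f(d-1)=f(d)$, and the matching $f(m-1) = dm - \binom{m}{2}$; the paper just states it contrapositively rather than by contradiction. Your explicit treatment of the degenerate case $n=0$ (all points coincident, forcing $E(G)=\emptyset$ by well-positionedness) is a small point the paper glosses over, but it does not change the substance of the argument.
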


\begin{proof}
If $|V(G)|=1$ then $(G,p)$ is infinitesimally rigid by definition. Suppose $|V(G)| \geq 2$ and the affine span of $\{p_v:v \in V(G)\}$ has dimension $n$. Define the map $f: \mathbb{R} \rightarrow \mathbb{R}$ where 
\begin{align*}
f(x) = \frac{(x+1)(2d- x)}{2}.
\end{align*}
We note that $f$ is increasing on the interval $[0, d-1]$ and $f(d-1)=f(d)$, thus it follows that $f(|V(G)|-1) \geq f(n)$. We note
\begin{align*}
|E(G)| &\leq \frac{|V(G)|(|V(G)|-1)}{2} \\
&= d|V(G)| - f(|V(G)|-1) \\
&\leq d|V(G)| - f(n) \\
&< d|V(G)| - \dim \mathcal{T}(p) \text{ by Lemma \ref{flextheorem2}}
\end{align*}
thus by Theorem \ref{maxwellpaper}, $(G,p)$ is infinitesimally flexible. 
\end{proof}

\begin{remark}[Final remark]
We remark that combinatorial characterisations of infinitesimal rigidity have recently been obtained in some normed space contexts. However other forms of rigidity, such as redundant and global rigidity, have not yet been explored in general normed spaces.
\end{remark}

\end{document}